\def\ps@pprintTitle{%
 \let\@oddhead\@empty
 \let\@evenhead\@empty
 \def\@oddfoot{\hfill\today}%
 \let\@evenfoot\@oddfoot}
\newcommand*\patchAmsMathEnvironmentForLineno[1]{%
  \expandafter\let\csname old#1\expandafter\endcsname\csname #1\endcsname
  \expandafter\let\csname oldend#1\expandafter\endcsname\csname end#1\endcsname
  \renewenvironment{#1}%
     {\linenomath\csname old#1\endcsname}%
     {\csname oldend#1\endcsname\endlinenomath}}%
\newcommand*\patchBothAmsMathEnvironmentsForLineno[1]{%
  \patchAmsMathEnvironmentForLineno{#1}%
  \patchAmsMathEnvironmentForLineno{#1*}}%
\algnewcommand\algorithmicinput{\textbf{Input:}}
\algnewcommand\Input{\item[\algorithmicinput]}
\algnewcommand\algorithmicoutput{\textbf{Output:}}
\algnewcommand\Output{\item[\algorithmicoutput]}
\newcounter{savealgorithm}
\newtheorem{theorem}{Theorem}
\newtheorem{lemma}[theorem]{Lemma}
\newtheorem{proposition}[theorem]{Proposition}
\theoremstyle{definition}
\newtheorem{definition}[theorem]{Definition}
\newtheorem{expl}{Example}
\newtheorem{remark}{Remark}
\newtheorem{assumption}{Assumption}
\crefname{theorem}{Theorem}{Theorems}
\Crefname{theorem}{Theorem}{Theorems}
\crefname{lemma}{lemma}{lemmas}
\Crefname{lemma}{Lemma}{Lemmas}
\crefname{assumption}{Assumption}{Assumptions}
\crefname{line}{Line}{Lines}
\def\cA{{\mathcal A}}
\def\cB{{\mathcal B}}
\def\cD{{\mathcal D}}
\def\cE{{\mathcal E}}
\def\cF{{\mathcal F}}
\def\cG{{\mathcal G}}
\def\cH{{\mathcal H}}
\def\cL{{\mathcal L}}
\def\cP{{\mathcal P}}
\def\cT{{\mathcal T}}
\def\cU{{\mathcal U}}
\def\cV{{\mathcal V}}
\def\cW{{\mathcal W}}
\def\JKindep{{Jordan-Krylov independent}}
\def\JKindepce{{Jordan-Krylov independence}}
\def\JKbasis{{Jordan-Krylov basis}}
\def\JKelim{{Jordan-Krylov elimination}}
\def\KrylovGS{{Krylov generating set}}
\def\dsc{{direct sum condition}}
\newcommand{\Z}{\mathbb{Z}}
\newcommand{\K}{K}
\newcommand{\Q}{\mathbb{Q}}
\newcommand{\C}{\mathbb{C}}
\newcommand{\ann}{\mathrm{Ann}}
\newcommand{\Span}[0]{\mathrm{span}}
\newcommand{\rank}{\mathrm{rank}}
\newcommand{\lcm}{\mathrm{lcm}}
\newcommand{\lbar}{{\bar{\ell}}}
\newcommand{\ellbar}{\bar{\ell}}
\def\b{{\bm{b}}}
\def\e{{\bm{e}}}
\def\u{{\bm{u}}}
\def\v{{\bm{v}}}
\def\w{{\bm{w}}}
\def\p{{\bm{p}}}
\def\q{{\bm{q}}}
\def\r{{\bm{r}}}
\def\0{{\bm{0}}}
\def\={{\leftarrow}}
\begin{document}

\begin{frontmatter}
  \title{Exact Algorithms for Computing Generalized Eigenspaces \\
  of Matrices via Jordan-Krylov Basis\tnoteref{t1}} 
  \author[niigata]{Shinichi Tajima}
  \ead{tajima@emeritus.niigata-u.ac.jp}

  \author[kanazawa]{Katsuyoshi Ohara}
  \ead{ohara@se.kanazawa-u.ac.jp}
  \ead[url]{http://air.s.kanazawa-u.ac.jp/~ohara/}

  \author[tsukuba]{Akira Terui\corref{corauthor}}
  \cortext[corauthor]{Corresponding author}
  \ead{terui@math.tsukuba.ac.jp}
  \ead[url]{https://researchmap.jp/aterui}

  \address[niigata]{Graduate School of Science and Technology, Niigata
    University, Niigata 950-2181, Japan}
  \address[kanazawa]{Faculty of Mathematics and Physics, Kanazawa
    University, Kanazawa 920-1192, Japan}
  \address[tsukuba]{Faculty of Pure and Applied Sciences, University
    of Tsukuba, Tsukuba 305-8571, Japan}

  \tnotetext[t1]{This work has been partly supported by JSPS KAKENHI
    Grant Numbers JP18K03320, JP21K03291, JP20K11845, and by the
    Research Institute for Mathematical Sciences, a Joint
    Usage/Research Center located in Kyoto University.}

  \begin{abstract}
        An effective exact method is proposed for computing 
        generalized eigenspaces of a matrix of
        integers or rational numbers.
        Keys of our approach are the use of minimal annihilating polynomials and the 
        concept of the {\JKbasis}.
        A new method, called {\JKelim}, is introduced to design an algorithm for
        computing {\JKbasis}. 
        The resulting algorithm outputs generalized eigenspaces as a form of Jordan chains.
        Notably, in the output, components of generalized eigenvectors are expressed as 
        polynomials in the associated eigenvalue as a variable.
  \end{abstract}

  \begin{keyword}
    Minimal annihilating polynomial \sep 
    Generalized eigenvectors \sep  
    Jordan chains \sep 
    Krylov vector space 
    \MSC[2020] 15A18 \sep 68W30
  \end{keyword}
\end{frontmatter}


\section{Introduction}
\label{sec:intro}

Exact linear algebra plays important roles in many fields of
mathematics and sciences. Over the last two decades, this area has been
extensively studied, and new algorithms have been
proposed for various types of computations,
such as 
computing canonical forms of matrices
(\cite{aug-cam1997},
\cite{dum-sau-vil2001},
\cite{hav-wag1999},
\cite{mor2004},
\cite{per-ste2010},
\cite{sau-zhe2004},
\cite{sto1998},
\cite{sto2001},
\cite{sto-lab1996}),
the
characteristic or the minimal polynomial of a matrix
(\cite{dum-per-wan2005}, \cite{neu-pra2008}), LU and other
decompositions and/or solving a system of linear equations
(\cite{bos-jea-sch2008},
\cite{ebe-gie-gio-sto-vil2006},
\cite{jea-per-sto2013},
\cite{may-sau-wan2007},
\cite{sau-woo-you2011}).
Also, the software has been developed
(\cite{alb2012},
\cite{che-sto2005},
\cite{linbox2002},
\cite{dum-gau-per2002},
\cite{dum2004},
\cite{dum-gio-per2008})
and comprehensive research results (\cite{gio2019})
have been presented.
\cite{kre-rob2016} 
investigate the interconnections between linear algebra and commutative algebra.
Notably, they study, in particular, generalized eigenproblems from the point of view of 
commutative algebra and its applications to polynomial ideals.

In the context of symbolic computation for linear algebra,
we proposed algorithms for eigenproblems
including computation of
spectral decomposition and 
(generalized) eigendecomposition
(\cite{oha-taj2009}, \cite{taj2013}, \cite{taj-oha-ter2014}, \cite{taj-oha-ter2018}).  
We studied eigenproblems from the point of view of spectral decomposition and proposed an exact 
algorithm for computing eigenvectors of matrices over integer or rational numbers by using minimal 
annihilating polynomials (\cite{taj-oha-ter2018b}).

In this paper, we treat generalized eigenspaces 
via Jordan chains as an extension of our previous papers.
In a naive method for computing generalized eigenspaces,
the generalized eigenvector is
obtained by solving a system of linear equations.
However, in general, the method has the disadvantage of algebraic number arithmetic.
It is often time-consuming for matrices of dozens of dimensions.
As methods for computing eigenvectors without solving a system of linear equations,
\cite{tak-yok1990} have proposed one 
by using the Frobenius normal form of a matrix and
\cite{mor-kur2001} have extended it 
for the case that the Frobenius normal form has multiple companion blocks and for computing generalized eigenvectors. 
Their methods require computation of the Frobenius normal form of the matrix, which tends to
be inefficient for matrices of large dimensions.

We introduce a concept of \emph{Jordan-Krylov basis} and a new method, 
called \emph{Jordan-Krylov elimination} in this paper, for computing a Jordan-Krylov basis. 
We show that the use of minimal annihilating polynomials allows us to design an effective method 
for computing generalized eigenspaces. 
Notably, the resulting algorithm does not involve linear system solving or normal form computation.
Our method has the following features.
First, a basis of the generalized eigenspace is computed as Jordan chains.
Second, each Jordan chain has a unified representation as 
a vector consisting of polynomials with rational coefficients
in a symbol $\lambda$ denoting the associated 
eigenvalue $\alpha$ and its conjugates.
We emphasize that our representation gives a relationship among the 
generalized eigenspaces associated to $\alpha$ and all its conjugates.
Third, in implementing the algorithms, the implementation of arithmetic in an algebraic 
extension field is not necessary. 
In part of our implementation, we perform calculations in the quotient ring of a polynomial ring, which is related to operations in an algebraic extension field. 
However, these calculations can be implemented solely with operations in the polynomial ring over the field of rational numbers. 
All other operations are also performed in the polynomial ring over the field of rational numbers.



We believe that our algorithm could be applied to several methods that require generalized eigenspaces and their structures in the case of multiplicities. 
For example, it could be useful when solving 
a zero-dimensional system of polynomial equations 
as the eigenproblem 
(\cite{CLO2005}),
or 
finding formal power series solutions of 
ordinary differential equations (\cite{tur1955}, \cite{bar1999}).

We remark that the essential ideas in this paper, such as the use of 
the minimal annihilating polynomials for computing generalized eigenvectors
were reported in \cite{taj2013}. 
We have reviewed the original ideas deeply and sought to optimize the efficiency of the algorithm.
As a result, we have come to the concept of {\JKbasis} and 
the algorithm of {\JKelim} based on it for computing generalized eigenspaces efficiently.

This paper is organized as follows.
In \Cref{sec:minimal-annihilating-polynomial-generalized-eigenvectors},
we give a formula that represents Jordan chains of generalized eigenvectors
as a polynomial with rational coefficients.  
In \Cref{sec:kernel-f(A)l-krylov-generalized-eigenspace}, 
we introduce the concept of {\JKbasis}. 
We show that the computation of Jordan chains can be reduced to that of a
{\JKbasis}. 
In \Cref{sec:jordan-krylov-elimination}, 
we give a method called {\JKelim}, 
and we present the resulting algorithm that computes generalized eigenspaces
from minimal annihilating polynomials.
In \Cref{sec:time-complexity}, we discuss the time complexity of the algorithm.
In \Cref{sec:examples}, 
we give an example.
Finally, in \Cref{sec:exp}, 
we give the result of a benchmark test.

\section{Generalized eigenvectors and Jordan chains}
\label{sec:minimal-annihilating-polynomial-generalized-eigenvectors}

Let us begin by recalling a result given in
\cite{taj-oha-ter2018b}
on algebraic properties of eigenvectors.
Let $\K\subset\C$ be a computational subfield, and $A$ a square matrix of order $n$
over $\K$.
Consider 
a monic irreducible factor $f(\lambda)$ of degree $d$ in $\K[\lambda]$ of the characteristic polynomial $\chi_A(\lambda)$ of $A$, with $m$ the multiplicity of $f(\lambda)$ in 
$\chi_A(\lambda)$.
Let $\pi_A(\lambda)$ be the minimal polynomial of $A$, $\lbar$ the multiplicity of $f(\lambda)$ in $\pi_A(\lambda)$.
Let $\alpha_1,\alpha_2,\ldots,\alpha_d$ be the roots of $f(\lambda)$ in 
$\C$.

Consider a bivariate polynomial
$\psi_f(\mu,\lambda)$ associated to $f(\lambda)$, defined as 
\begin{equation}
  \label{eq:symmetric-polynomial}
  \psi_f(\mu,\lambda)=\frac{f(\mu)-f(\lambda)}{\mu-\lambda} \in K[\mu,\lambda].
\end{equation}

Since the rank of the matrix $f(A)$ is less than $n$, there exists a nonzero vector 
$\w\in\K^n$
satisfying $f(A)\w=\0$. 
For that $\w$, 
we define 
\[
  \p(\lambda,\w)=\psi_f(A,\lambda E)\w,
\]
where $E$ is the identity matrix of order $n$.
Then, $\p(\lambda,\w)$ satisfies the following lemma (\cite{taj-oha-ter2018b}).

\begin{lemma}
  \label{lem:eigenvector}
  Let $\w\in\K^n$ be a nonzero vector such that $f(A)\w=\0$. 
  Then, for $i=1,2,\ldots,d$,
  \begin{enumerate}
    \item $\p(\alpha_i,\w)\ne\0$.
    \item $(A-\alpha_iE)\p(\alpha_i,\w)=\0$.
    \item $\p(\alpha_1,\w),\dots,\p(\alpha_d,\w)$ are linearly independent over $\C$.
  \end{enumerate}
\end{lemma}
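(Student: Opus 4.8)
The plan is to reduce all three statements to a single algebraic identity together with one structural fact about $\w$. Starting from the definition in \eqref{eq:symmetric-polynomial}, I would first record the factorization $f(\mu)-f(\lambda)=(\mu-\lambda)\psi_f(\mu,\lambda)$ in $\K[\mu,\lambda]$. Specializing $\lambda$ to the scalar $\alpha_i$ and substituting the matrix $A$ for $\mu$ turns this into the matrix identity $f(A)-f(\alpha_i)E=(A-\alpha_iE)\psi_f(A,\alpha_iE)$; since $f(\alpha_i)=0$ the left-hand side is just $f(A)$. Applying both sides to $\w$ and using $f(A)\w=\0$ then gives $(A-\alpha_iE)\p(\alpha_i,\w)=\0$ at once, which is claim (2). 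It is convenient to name the univariate polynomial $g_i(\mu):=\psi_f(\mu,\alpha_i)=f(\mu)/(\mu-\alpha_i)=\prod_{j\ne i}(\mu-\alpha_j)$, so that $\p(\alpha_i,\w)=g_i(A)\w$; note that $g_i$ is monic of degree $d-1$.

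The key structural fact I would isolate next is that the minimal polynomial of $\w$ with respect to $A$ equals $f$. Indeed, the monic generator $m$ of the ideal $\{h\in\K[\mu]\mid h(A)\w=\0\}$ divides $f$ because $f(A)\w=\0$; since $f$ is irreducible over $\K$ and $m$ is nonconstant (as $\w\ne\0$), we get $m=f$, and hence $\deg m=d$. Consequently the Krylov vectors $\w,A\w,\dots,A^{d-1}\w$ are linearly independent over $\K$, and a family of vectors in $\K^n$ that is independent over $\K$ remains independent over $\C$, so they are $\C$-linearly independent as well. Claim (1) then follows immediately: $g_i$ is a nonzero polynomial of degree $d-1$, so $\p(\alpha_i,\w)=g_i(A)\w$ is a nontrivial $\C$-linear combination of $\w,\dots,A^{d-1}\w$ and is therefore nonzero.

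For claim (3), suppose $\sum_{i=1}^d b_i\,\p(\alpha_i,\w)=\0$ with $b_i\in\C$, and set $h:=\sum_{i=1}^d b_ig_i$. Then $h(A)\w=\0$ with $\deg h\le d-1$, so the $\C$-linear independence of the Krylov vectors forces $h=0$ as a polynomial. It remains to check that $g_1,\dots,g_d$ are themselves linearly independent over $\C$; here I would evaluate at the roots and use $g_i(\alpha_k)=\prod_{j\ne i}(\alpha_k-\alpha_j)$, which vanishes for $k\ne i$ and is nonzero for $k=i$ (the roots $\alpha_1,\dots,\alpha_d$ are distinct, since $f$ is irreducible over a field of characteristic zero and hence separable). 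Evaluating $h=0$ at $\alpha_k$ gives $b_kg_k(\alpha_k)=0$, whence $b_k=0$ for every $k$.

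I expect the main obstacle to be the bookkeeping that transfers statements about vectors into statements about polynomials, namely justifying that the Krylov sequence $\w,A\w,\dots,A^{d-1}\w$ is independent over $\C$ (not merely over $\K$) and that this independence lets one pass from $h(A)\w=\0$ to the polynomial identity $h=0$. Once this bridge is in place, (2) is purely formal and both (1) and (3) reduce to elementary properties of the Lagrange-type polynomials $g_i$.
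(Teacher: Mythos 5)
Your proof is correct and follows essentially the same route as the paper: claim (2) is the same formal computation, and claims (1) and (3) are reduced to the $\C$-linear independence of the Krylov vectors $\w, A\w, \dots, A^{d-1}\w$, which both arguments obtain from the irreducibility of $f$ over $\K$. The only cosmetic difference is that where the paper passes through the Lagrange interpolation identity and the invertibility of a Vandermonde matrix in the $\alpha_i$, you establish the equivalent fact directly by showing the polynomials $g_i(\mu)=\psi_f(\mu,\alpha_i)$ are linearly independent via evaluation at the (distinct) roots.
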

\begin{proof}
  By the definition of $\psi_f(\mu,\lambda)$, we have
  \[
    (A-\alpha_iE)\p(\alpha_i,\w)=(A-\alpha_iE)\psi_f(A,\alpha_i E)\w=f(A)\w =\0.
  \]
  For an eigenvalue $\alpha_i\in\C$, consider a vector
  \[
    \q(\alpha_i,\w) = \frac{1}{\psi_f(\alpha_i,\alpha_i)} \p(\alpha_i,\w).
  \]
  Note that, since 
  \[
    \psi_f(\mu,\alpha_i)=(\mu-\alpha_1)\cdots(\mu-\alpha_{i-1})(\mu-\alpha_{i+1})\cdots(\mu-\alpha_d),
  \]
  we have $\psi_f(\alpha_i,\alpha_i)\ne 0$.
  With Lagrange's interpolation formula, we have
  \[
    \sum_{i=1}^{d} \frac{\psi_f(\mu,\alpha_i)}{\psi_f(\alpha_i,\alpha_i)}=1,
  \]
  which derives
  \[
    \sum_{i=1}^{d} \q(\alpha_i,\w)=\w.
  \]
  As a result, we have 
  \[
    \alpha_1^k \q(\alpha_1,\w) + \alpha_2^k \q(\alpha_2,\w) + 
    \cdots + \alpha_d^k \q(\alpha_d,\w) = A^k \w,
    \quad k=0,1,2,\ldots,d-1,
  \]
  which is rewritten as
  \[
    \left[
      \begin{array}{llll}
        1 & 1 & \cdots & 1 \\
        \alpha_1 & \alpha_2 & \cdots & \alpha_d \\
        \vdots  & \vdots &  & \vdots \\
        \alpha_1^{d-1} & \alpha_2^{d-1} & \cdots & \alpha_d^{d-1}
      \end{array}
    \right]
    \begin{bmatrix}
      \q(\alpha_1, \w) \\ \q(\alpha_2, \w) \\ \vdots \\ \q(\alpha_d, \w)
    \end{bmatrix}
    =
    \begin{bmatrix}
      \w \\ A \w \\ \vdots \\ A^{d-1} \w
    \end{bmatrix}
    .
  \]
  (For simplicity, we have used the van der Monde matrix to show linear independence.)
   In the above formula, the determinant of the matrix in the left-hand-side is not 
  equal to zero. 
  Furthermore, since $f(\lambda)$ is irreducible over $\K$,
  the vectors 
  $\w,A\w,\ldots,A^{d-1}\w$ are linearly independent over $\K$,
  so are
  $\q(\alpha_1,\w),\q(\alpha_2,\w),\ldots,\q(\alpha_d,\w)$ 
  over $\C$.
  Accordingly, the vectors \linebreak
  $\p(\alpha_1,\w),\p(\alpha_2,\w),\ldots,\p(\alpha_d, \w)$ are also linearly 
  independent over $\C$ and $\p(\alpha_i,\w) \ne \0$, 
  which completes the proof.
\end{proof}

Notice that the vector $\p(\lambda,\w)$ representing eigenvectors associated to the eigenvalues 
$\alpha_1,\alpha_2,\dots,\alpha_d$ is a polynomial in $\lambda$ of degree $d-1$. In this sense, 
this representation has no redundancy.
In this section, we generalize the above results and give a formula that represents a Jordan chain of generalized eigenvectors.

For $1\le\ell\le\bar{\ell}$, let
\[
  \ker f(A)^{\ell} = \{\u \in \K^n \mid f(A)^\ell \u = \0\},
\]
with $\ker f(A)^0=\{\0\}$.
There exists 
an ascending chain 
\begin{equation*}
  \label{eq:kerf(A)-ascending-chain}
  \{\0\} \subset \ker f(A) \subset \ker f(A)^2 \subset \cdots \ker f(A)^{\ell} \subset \cdots  \subset \ker f(A)^{\bar{\ell}},
\end{equation*}
of vector spaces over $\K$.

\begin{definition}
  For $\0\ne\u\in\ker f(A)^{\bar{\ell}}$ and $1\le\ell\le\bar{\ell}$, 
  the \emph{rank} of $\u$ with respect to $f(\lambda)$,
  denoted by $\rank_f\u$, is equal to $\ell$ if 
  $\u\in\ker f(A)^{\ell}\setminus\ker f(A)^{\ell-1}$. 
\end{definition}

\begin{definition}
  For $1\le k\le\ellbar$, 
  let $\psi_f^{(k)}(\mu,\lambda)=(\psi_f(\mu,\lambda))^k \mod f(\lambda)$,
  in which $(\psi_f(\mu,\lambda))^k$ is regarded as an element in $\K[\lambda][\mu]$ and 
  the coefficients in $\psi_f^{(k)}(\mu,\lambda)$ are defined as the remainder of 
  those in $(\psi_f(\mu,\lambda))^k$ divided by $f(\lambda)$.
\end{definition}

Let $\u\in\ker_f(A)^{\lbar}$ be a nonzero vector of $\rank_f\u=\ell$.
For $k=1,\dots,\ell$, we define
\begin{equation}
  \label{pklambda}
  \p^{(k)}(\lambda,\u) = \psi_f^{(k)}(A,\lambda E)f(A)^{\ell-k}\u.    
\end{equation}
Then, we have the following lemmas.

\begin{lemma}
  \label{lem:p(k)-p(k-1)}
  Assume that $\rank_f\u=\ell$. Then, for $i=1,2,\ldots,d$ and $1\le k\le \ell$, 
  \[
    (A-\alpha_i E)\p^{(k)}(\alpha_i,\u) = \p^{(k-1)}(\alpha_i,\u)
  \]
  holds.
\end{lemma}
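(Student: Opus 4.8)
The plan is to reduce the claimed identity to a short telescoping computation inside the commutative matrix algebra generated by $A$ and $E$. Two ingredients make this work, and I would isolate both before combining them: a factorization identity coming directly from the definition of $\psi_f$, and the observation that the coefficient-wise reduction modulo $f(\lambda)$ in the definition of $\psi_f^{(k)}$ becomes invisible once we substitute $\lambda=\alpha_i$, since $\alpha_i$ is a root of $f$.

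For the first ingredient I would start from the defining relation $(\mu-\lambda)\psi_f(\mu,\lambda)=f(\mu)-f(\lambda)$ in $\K[\mu,\lambda]$, substitute $\mu\mapsto A$ and $\lambda\mapsto\alpha_i E$ (legitimate because these matrices commute), and use $f(\alpha_i)=0$ to get
\[
  (A-\alpha_i E)\,\psi_f(A,\alpha_i E)=f(A)-f(\alpha_i)E=f(A).
\]
This is exactly the relation already used implicitly in the proof of \Cref{lem:eigenvector}. For the second ingredient I would write $\psi_f^{(k)}(\mu,\lambda)=(\psi_f(\mu,\lambda))^k-f(\lambda)\,g(\mu,\lambda)$ for the quotient $g\in\K[\mu,\lambda]$ of the division by $f(\lambda)$; substituting $\mu\mapsto A$, $\lambda\mapsto\alpha_i E$ and using $f(\alpha_i)=0$ gives
\[
  \psi_f^{(k)}(A,\alpha_i E)=(\psi_f(A,\alpha_i E))^k-f(\alpha_i)\,g(A,\alpha_i E)=(\psi_f(A,\alpha_i E))^k.
\]

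With these two facts the computation collapses. Since every factor is a polynomial in $A$ and the factors therefore commute, applying the factorization identity once to convert a single $(A-\alpha_i E)\psi_f(A,\alpha_i E)$ into $f(A)$ yields
\[
  (A-\alpha_i E)\,\p^{(k)}(\alpha_i,\u)=(A-\alpha_i E)(\psi_f(A,\alpha_i E))^k f(A)^{\ell-k}\u=(\psi_f(A,\alpha_i E))^{k-1}f(A)^{\ell-(k-1)}\u,
\]
and the second ingredient, applied with exponent $k-1$, identifies the right-hand side as $\psi_f^{(k-1)}(A,\alpha_i E)f(A)^{\ell-(k-1)}\u=\p^{(k-1)}(\alpha_i,\u)$, which is the desired equality.

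The only point that genuinely needs care is the base case $k=1$, where $\p^{(0)}$ appears on the right. Here $\psi_f^{(0)}=1$, so $\p^{(0)}(\alpha_i,\u)=f(A)^{\ell}\u$, and the telescoping computation for $k=1$ likewise produces $f(A)^{\ell}\u$; both vanish precisely because $\rank_f\u=\ell$ forces $f(A)^{\ell}\u=\0$, so the two sides agree. I expect the main conceptual obstacle to be stating cleanly why the modular reduction defining $\psi_f^{(k)}$ may be discarded after evaluation at a root of $f$ — that is the content of the second ingredient, and once it is recorded the rest is routine commutative manipulation.
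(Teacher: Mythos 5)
Your proof is correct and takes essentially the same route as the paper's: the paper's entire argument is to assert the identity $(A-\alpha_iE)\psi^{(k)}_f(A,\alpha_i E) = \psi^{(k-1)}_f(A,\alpha_i E)f(A)$ and multiply by $f(A)^{\ell-k}\u$, which is precisely what your two ingredients combine to give. Your explicit justification that the coefficient-wise reduction modulo $f(\lambda)$ becomes invisible after evaluating at a root $\alpha_i$, and your handling of the $k=1$ case via $\p^{(0)}(\alpha_i,\u)=f(A)^{\ell}\u=\0$, supply details the paper leaves implicit.
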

\begin{proof}
  Since
  \[
    (A-\alpha_iE)\psi^{(k)}_f(A,\alpha_i E) = \psi^{(k-1)}_f(A,\alpha_i E)f(A),
  \]
  we have
  \[
    (A-\alpha_iE)\psi^{(k)}_f(A,\alpha_i E)f(A)^{\ell-k} = \psi^{(k-1)}_f(A,\alpha_i E)f(A)^{\ell-(k-1)}.
  \]
  By multiplying $\u$ on both sides from the right, we have
  \[
    (A-\alpha_i E)\p^{(k)}(\alpha_i,\u) = \p^{(k-1)}(\alpha_i,\u),
  \]
  which proves the lemma.
\end{proof}

\begin{lemma}
  \label{lem:generalized-eigenvectors}
  Assume that $\rank_f\u=\ell$. Then, for $i=1,2,\ldots,d$ and $1\le k\le \ell$, 
  \[
    (A-\alpha_i E)^{k-1} \p^{(k)}(\alpha_i,\u) \ne \0 \qquad \text{and} \qquad (A-\alpha_i E)^k \p^{(k)}(\alpha_i,\u) = \0 
  \]
  hold.
\end{lemma}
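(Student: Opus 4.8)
The plan is to deduce both assertions from the two preceding lemmas by telescoping the action of $(A-\alpha_i E)$ on $\p^{(k)}(\alpha_i,\u)$. \Cref{lem:p(k)-p(k-1)} states that $(A-\alpha_i E)\p^{(k)}(\alpha_i,\u)=\p^{(k-1)}(\alpha_i,\u)$, so applying it $j$ times yields $(A-\alpha_i E)^{j}\p^{(k)}(\alpha_i,\u)=\p^{(k-j)}(\alpha_i,\u)$ for $0\le j\le k$. The two cases I care about are $j=k-1$ and $j=k$, giving $(A-\alpha_i E)^{k-1}\p^{(k)}(\alpha_i,\u)=\p^{(1)}(\alpha_i,\u)$ and $(A-\alpha_i E)^{k}\p^{(k)}(\alpha_i,\u)=\p^{(0)}(\alpha_i,\u)$. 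Thus everything reduces to understanding $\p^{(1)}$ and $\p^{(0)}$.

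For the vanishing statement I would evaluate $\p^{(0)}$ directly. Since $\psi_f^{(0)}(\mu,\lambda)=1$, the definition of $\p^{(k)}$ in \eqref{pklambda} gives $\p^{(0)}(\lambda,\u)=f(A)^{\ell}\u$. Because $\rank_f\u=\ell$ means $\u\in\ker f(A)^{\ell}$, we get $f(A)^{\ell}\u=\0$, hence $(A-\alpha_i E)^{k}\p^{(k)}(\alpha_i,\u)=\p^{(0)}(\alpha_i,\u)=\0$.

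For the non-vanishing statement I would recognize $\p^{(1)}(\alpha_i,\u)$ as an ordinary eigenvector produced by \Cref{lem:eigenvector}. Put $\w=f(A)^{\ell-1}\u$. The rank hypothesis $\rank_f\u=\ell$ says $\u\notin\ker f(A)^{\ell-1}$, so $\w\ne\0$, while $f(A)\w=f(A)^{\ell}\u=\0$; thus $\w$ is an admissible vector for \Cref{lem:eigenvector}. It remains to match $\p^{(1)}(\alpha_i,\u)=\psi_f^{(1)}(A,\alpha_i E)\w$ with $\p(\alpha_i,\w)=\psi_f(A,\alpha_i E)\w$. Here I would use that $\psi_f^{(1)}$ is obtained from $\psi_f$ by reducing the $\lambda$-coefficients modulo $f(\lambda)$; since $f(\alpha_i)=0$, this reduction does not change the value at $\lambda=\alpha_i$, so $\psi_f^{(1)}(A,\alpha_i E)=\psi_f(A,\alpha_i E)$ and therefore $\p^{(1)}(\alpha_i,\u)=\p(\alpha_i,\w)$. \Cref{lem:eigenvector}(1) then gives $\p(\alpha_i,\w)\ne\0$, which is exactly $(A-\alpha_i E)^{k-1}\p^{(k)}(\alpha_i,\u)\ne\0$.

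The routine telescoping and the evaluation of $\p^{(0)}$ are straightforward; the one point that needs care is the last paragraph, where the modular reduction hidden in $\psi_f^{(k)}$ must be shown to be invisible upon specializing $\lambda=\alpha_i$, and the rank condition must be translated precisely into the statement that the reduced vector $\w=f(A)^{\ell-1}\u$ is a nonzero element of $\ker f(A)$. This is the main (though minor) obstacle; once it is settled, both halves follow immediately from the earlier lemmas.
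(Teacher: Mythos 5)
Your proof is correct and follows essentially the same route as the paper: telescoping via \Cref{lem:p(k)-p(k-1)} down to $\p^{(1)}(\alpha_i,\u)=\p(\alpha_i,\w)$ with $\w=f(A)^{\ell-1}\u$, and then invoking \Cref{lem:eigenvector}. The only (harmless) difference is that for the vanishing half the paper stops at $\p^{(1)}$ and applies \Cref{lem:eigenvector}(2), whereas you telescope one step further to $\p^{(0)}(\alpha_i,\u)=f(A)^{\ell}\u=\0$ under the natural convention $\psi_f^{(0)}=1$.
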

\begin{proof}
  By Lemma~\ref{lem:p(k)-p(k-1)}, we have
  \begin{align*}
    (A-\alpha_i E)^{k-1}\p^{(k)}(\alpha_i,\u) &= \p^{(1)}(\alpha_i,\u),\\
    (A-\alpha_i E)^{k}\p^{(k)}(\alpha_i,\u) &= (A-\alpha_i E)\p^{(1)}(\alpha_i,\u),
  \end{align*}
  where $\p^{(1)}(\alpha_i,\u) = \p(\alpha_i,f(A)^{\ell-1}\u)$.
  Let $\w=f(A)^{\ell-1}\u$, then $\w$ satisfies that
  $\w\ne \0$ and $f(A)\w = \0$ since $\rank_f\u=\ell$.
  Thus, by Lemma~\ref{lem:eigenvector}, we have
  $\p^{(1)}(\alpha_i,\u) \ne \0$ and $(A-\alpha_i E)\p^{(1)}(\alpha_i,\u) = \0$.
  This completes the proof.
\end{proof}

Lemma~\ref{lem:generalized-eigenvectors} says that 
$\p^{(k)}(\lambda,\u)$ gives a representation of generalized eigenvectors of $A$ of rank $k$.
Summarizing the above, we have the following theorem.

\begin{theorem}
  Assume that $\rank_f\u=\ell$. Then, for $i=1,2,\ldots,d$,
  \begin{equation}
    \label{eq:jordan-chain}
    \{
      \p^{(\ell)}(\alpha_i,\u), \p^{(\ell-1)}(\alpha_i,\u), \ldots, \p^{(1)}(\alpha_i,\u)
    \}
  \end{equation}
  gives a Jordan chain of length $\ell$.
\end{theorem}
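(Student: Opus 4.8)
The plan is to verify directly that \eqref{eq:jordan-chain} meets the defining properties of a Jordan chain, relying entirely on the two preceding lemmas. Recall that a Jordan chain of length $\ell$ for $\alpha_i$ is a linearly independent family $\v_\ell, \v_{\ell-1}, \ldots, \v_1$ in which $\v_1$ is a non-zero eigenvector and $(A - \alpha_i E)\v_k = \v_{k-1}$ for $2 \le k \le \ell$. Writing $\v_k = \p^{(k)}(\alpha_i, \u)$, I would split the verification into the shift relations, the non-vanishing of each vector, and linear independence.

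The shift relations and non-vanishing are essentially already established. \Cref{lem:p(k)-p(k-1)} gives $(A - \alpha_i E)\p^{(k)}(\alpha_i, \u) = \p^{(k-1)}(\alpha_i, \u)$ for $1 \le k \le \ell$, which is the required relation for $k \ge 2$; for $k = 1$ it shows $(A - \alpha_i E)\p^{(1)}(\alpha_i, \u) = \p^{(0)}(\alpha_i, \u)$, and extending \eqref{pklambda} to $k = 0$ gives $\psi_f^{(0)} = 1$, hence $\p^{(0)}(\alpha_i, \u) = f(A)^\ell \u = \0$ because $\rank_f \u = \ell$ places $\u$ in $\ker f(A)^\ell$. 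Thus $\p^{(1)}(\alpha_i, \u)$ is an eigenvector, and each $\p^{(k)}(\alpha_i, \u)$ is non-zero by the first assertion of \Cref{lem:generalized-eigenvectors}, since $(A - \alpha_i E)^{k-1}\p^{(k)}(\alpha_i, \u) \ne \0$ forces $\p^{(k)}(\alpha_i, \u) \ne \0$.

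The one step requiring a genuine argument is linear independence, which I would obtain by the standard descending-power peeling. Assume $\sum_{k=1}^{\ell} c_k \p^{(k)}(\alpha_i, \u) = \0$. Applying $(A - \alpha_i E)^{\ell - 1}$ and using $(A - \alpha_i E)^k \p^{(k)}(\alpha_i, \u) = \0$ from \Cref{lem:generalized-eigenvectors}, every term with $k \le \ell - 1$ is annihilated, leaving $c_\ell (A - \alpha_i E)^{\ell - 1}\p^{(\ell)}(\alpha_i, \u) = \0$; as this vector is non-zero, $c_\ell = 0$. Applying the powers $(A - \alpha_i E)^{\ell - 2}, (A - \alpha_i E)^{\ell - 3}, \ldots, (A - \alpha_i E)^0$ in turn peels off $c_{\ell-1}, c_{\ell-2}, \ldots, c_1$, giving independence. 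I do not expect a real obstacle here: the two lemmas have done the substantive work, and the only care needed is the bookkeeping in this induction, namely tracking precisely which power of $A - \alpha_i E$ kills which vector so that exactly one coefficient survives at each stage.
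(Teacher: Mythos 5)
Your argument is correct and follows the paper's intended route exactly: the paper offers no separate proof, introducing the theorem with ``Summarizing the above,'' i.e.\ treating it as an immediate consequence of \Cref{lem:p(k)-p(k-1)} (the shift relations) and \Cref{lem:generalized-eigenvectors} (non-vanishing and annihilation), which are precisely the two ingredients you invoke. The only content you add beyond the paper is the explicit peeling argument for linear independence and the observation that $\p^{(0)}(\alpha_i,\u)=f(A)^{\ell}\u=\0$, both of which are routine and correctly carried out.
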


Accordingly,
$\{
  \p^{(\ell)}(\lambda,\u), \p^{(\ell-1)}(\lambda,\u), \ldots, \p^{(1)}(\lambda,\u)
\}$
can be regarded as a representation of 
the Jordan chain \eqref{eq:jordan-chain}.
That is, from a vector $\u$ of rank $\ell$ in $\ker f(A)^{\ell}\subset\K^n$, 
the representation of the Jordan chain is derived
in terms of $\p^{(k)}(\lambda,\u)$.

For the discussion in the following example, we introduce the concept of the \emph{minimal annihilating polynomial}. The key role of the minimal annihilating polynomial in the computation of eigenvectors will be discussed in \Cref{sec:jordan-krylov-elimination} in detail.

\begin{definition}[The minimal annihilating polynomial]
  For $\u\in\K^n$, let $\pi_{A,\u}(\lambda)$ be the monic generator of a principal 
  ideal $\ann_{K[\lambda]}(\u)=\{h(\lambda)\in K[\lambda] \mid h(A)\u=\bm{0}\}$.
  Then, $\pi_{A,\u}(\lambda)$ is called \emph{the minimal annihilating polynomial of $\u$
  with respect to $A$}.
\end{definition}

We note that, in the following example, standard unit vectors $\e_1,\dots,\e_n$ are used.
Let us denote $\pi_{A,j}(\lambda)=\pi_{A,\e_j}(\lambda)$ for 
the standard unit vector $\e_j$ and call it the $j$-th unit minimal annihilating polynomial. 

\begin{expl}
  \label{expl:eigenvector-1}
  Let $f(\lambda)=\lambda^2+\lambda+5$ and
  \[
    A=
    \begin{pmatrix}
      0 & 0 & 0 & 0 & 0 & -125\\
      1 & 0 & 0 & 0 & 0 & -75\\
      0 & 1 & 0 & 0 & 0 & -90\\
      0 & 0 & 1 & 0 & 0 & -31\\ 
      0 & 0 & 0 & 1 & 0 & -18\\ 
      0 & 0 & 0 & 0 & 1 & -3
    \end{pmatrix}
    ,
  \]
  the companion matrix of $f(\lambda)^3$.
  The characteristic polynomial $\chi_A(\lambda)$ and the unit minimal 
  annihilating polynomial 
  $\pi_{A,j}(\lambda)$ ($j=1,2,\dots,6$) are 
  $\chi_A(\lambda)=f(\lambda)^3$.
  We see that $\e_1\in\ker f(A)^3$ and $\rank_f\e_1=3$.
Now, let us compute the generalized eigenvector $\p_k(\lambda,\e_1)$
associated to $f(\lambda)$. For
\begin{equation}
  \label{eq:psi-f}
  \psi_f(\mu,\lambda)=\mu+\lambda+1,
\end{equation}
$\psi_f(\mu,\lambda)^2$ and $\psi_f(\mu,\lambda)^3$ are polynomials of 
degree 2 and 3 with respect to $\lambda$, respectively. 
Let $\psi_f^{(2)}(\mu,\lambda)$ and $\psi_f^{(3)}(\mu,\lambda)$ be the remainder of
the division of $\psi_f(\mu,\lambda)^2$ and $\psi_f(\mu,\lambda)^3$
divided by $f(\lambda)$ with respect to $\lambda$, calculated as
\begin{equation}
  \label{eq:psi-2}
  \begin{split}
    \psi_f^{(2)}(\mu,\lambda) &= \mu^2+(2\lambda+2)\mu+\lambda-4,\\
    \psi_f^{(3)}(\mu,\lambda) &= \mu^3 +(3\lambda+3)\mu^2+(3\lambda-12)\mu-4\lambda-9,      
  \end{split}
\end{equation}
respectively. Then, the generalized eigenvectors are computed 
with $\e_1$
as
\begin{align*}
  \p^{(3)}(\lambda,\e_1) &= 
  \psi_f^{(3)}(A,\lambda E)\bm{e}_1
  = {}^t(-4,3,3,0,0,0)\lambda +{}^t(-9,-12,3,1,0,0),\\
  \p^{(2)}(\lambda,\e_1) &= 
  \psi_f^{(2)}(A,\lambda E)f(A)\bm{e}_1
  = {}^t(5,11,3,2,0,0)\lambda +{}^t(-20,6,3,3,1,0),\\
  \p^{(1)}(\lambda,\e_1) &= 
  \psi_f^{(1)}(A,\lambda E)f(A)^2\bm{e}_1
  = {}^t(25,10,11,2,1,0)\lambda +{}^t(25,35,21,13,3,1),  
\end{align*}
which satisfy
\begin{alignat*}{2}
  (A-\lambda E)^2\p^{(3)}(\lambda,\e_1) &\ne \bm{0},\quad &
  (A-\lambda E)^3\p^{(3)}(\lambda,\e_1) &= \bm{0}, 
  \\
  (A-\lambda E)\p^{(2)}(\lambda,\e_1) &\ne \bm{0},&
  (A-\lambda E)^2\p^{(2)}(\lambda,\e_1) &= \bm{0}, 
  \\
  \p^{(1)}(\lambda,\e_1) &\ne \bm{0}, &
  (A-\lambda E)\p^{(1)}(\lambda,\e_1) &= \bm{0}, 
\end{alignat*}
respectively.
Thus, the set
$\{\p^{(3)}(\lambda,\e_1),
\p^{(2)}(\lambda,\e_1),
\p^{(1)}(\lambda,\e_1)
\}$ 
is a Jordan chain of length 3.
\end{expl}

\section{{\JKbasis} in $\ker f(A)^{\lbar}$}
\label{sec:kernel-f(A)l-krylov-generalized-eigenspace}

The discussion in the previous section shows that 
a vector $\u\in\ker f(A)^{\lbar}\subset \K^n$ of rank $\ell$
gives rise to a representation of the corresponding Jordan chains of
length $\ell$.
In this section, we show that there exists a finite subset 
$\cB$ of $\ker f(A)^{\lbar}$ such that 
any generalized eigenvectors can be represented
as a linear combination of vectors 
$\p^{(k)}(\lambda,\v)$ with $\v\in\cB$.

For $\w\in\K^n$, a vector space
\[
  L_A(\w) = \Span_{\K}\, \{ A^k \w \mid k=0,1,2,\ldots \}
\]
is called the \emph{Krylov vector space}.
For $\u\in\ker f(A)^{\ell}$, 
since $A$ and $f(A)$ commute, $\rank_f\u=\ell$ implies that 
$L_A(\u)\subset \ker f(A)^{\ell}$.
For $d=\deg f(\lambda)$, let
\[
  \cL_{A,d}(\u) = \{\u,A\u,A^2\u,\dots,A^{d-1}\u\}
\]
and
\[
L_{A,d}(\u) = \Span_{\K}\cL_{A,d}(\u).
\]
Since $f(\lambda)$ is irreducible, 
$\cL_{A,d}(\u)$
is linearly independent, and
the rank of the non-trivial elements in $L_{A,d}(\u)$ is equal to $\ell$.
Let $\cL_A(\u)=\cL_{A,d}(\u)\cup\cL_{A,d}(f(A)\u)\cup\cdots\cup\cL_{A,d}(f(A)^{\ell-1}\u)$.
Clearly, $\cL_A(\u)$ is also linearly independent.
Then, it holds that
\[
L_{A}(\u) = L_{A,d}(\u) \oplus f(A) L_{A,d}(\u) \oplus \cdots \oplus f(A)^{\ell-1} L_{A,d}(\u),
\]
thus $\dim_K L_{A}(\u)=\ell d$.
(In a more straightforward manner, we see that $\dim_K L_{A}(\u)=\ell d$ by the fact that $f(A)^{\ell}$ is the minimal annihilating polynomial of $\u$.)
Note that, for $1\le \ell'\le \ell-1$, the rank of the non-trivial elements 
in $f(A)^{\ell'}L_{A,d}(\u)$ is equal to $\ell-\ell'$.


Consider the subspace spanned by a Jordan chain
\[
  P_A(\alpha_i,\u) = \Span_{\C}\, \{ 
    \p^{(\ell)}(\alpha_i,\u), \p^{(\ell-1)}(\alpha_i,\u), \ldots, \p^{(1)}(\alpha_i,\u)
    \},
\]
where $\alpha_1,\alpha_2,\dots,\alpha_d$ are the roots of $f(\lambda)$ in $\C$.
Let
\[
P_A(\u) = P_A(\alpha_1,\u) \oplus P_A(\alpha_2,\u) \oplus \cdots \oplus P_A(\alpha_d,\u).
\]
We have the following property on $P_A(\u)$.

\begin{lemma}
  \label{lem:p_a(u)}
  $P_A(\u) = \C \otimes_{\K} L_{A}(\u)$ holds.  
\end{lemma}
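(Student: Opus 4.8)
The plan is to prove the equality by combining an easy inclusion with a dimension count: I would show $P_A(\u) \subseteq \C \otimes_{\K} L_A(\u)$ and then verify that both sides have the same finite dimension $\ell d$ over $\C$, so that the inclusion forces equality. The inclusion is essentially immediate from the defining formula \eqref{pklambda}, whereas the dimension computation on the $P_A(\u)$ side is where the real content lies.

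For the inclusion, I would observe that each vector $\p^{(k)}(\alpha_i,\u) = \psi_f^{(k)}(A,\alpha_i E)f(A)^{\ell-k}\u$ is obtained by applying a polynomial in $A$ with coefficients in $\C$ to $\u$. Hence $\p^{(k)}(\alpha_i,\u)$ lies in $\Span_\C\{A^m\u \mid m \ge 0\}$, which is exactly $\C\otimes_\K L_A(\u)$ once we identify the complexification with the $\C$-span of the Krylov vectors inside $\C^n$ (a $\K$-basis of $L_A(\u)$ stays $\C$-linearly independent in $\C^n$). Summing over $k$ and $i$ gives $P_A(\alpha_i,\u)\subseteq \C\otimes_\K L_A(\u)$ for every $i$, and therefore $P_A(\u)\subseteq \C\otimes_\K L_A(\u)$.

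For the dimension count, the right-hand side satisfies $\dim_\C(\C\otimes_\K L_A(\u)) = \dim_\K L_A(\u) = \ell d$, which was already established above. On the left, each $P_A(\alpha_i,\u)$ is spanned by the Jordan chain $\{\p^{(\ell)}(\alpha_i,\u),\ldots,\p^{(1)}(\alpha_i,\u)\}$, whose vectors are linearly independent: this follows from \Cref{lem:p(k)-p(k-1)} and \Cref{lem:generalized-eigenvectors}, which give the shift structure $(A-\alpha_i E)\p^{(k)}(\alpha_i,\u)=\p^{(k-1)}(\alpha_i,\u)$ together with a nonzero eigenvector $\p^{(1)}(\alpha_i,\u)$ at the bottom (applying $(A-\alpha_i E)^{\ell-1}$ to a dependence relation peels off the coefficients one at a time). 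Thus $\dim_\C P_A(\alpha_i,\u)=\ell$ for each $i$.

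I expect the directness of the sum defining $P_A(\u)$ to be the main point requiring care, since it is what upgrades $\sum_i \dim_\C P_A(\alpha_i,\u)$ to $\dim_\C P_A(\u)=\ell d$. The natural argument is that by \Cref{lem:generalized-eigenvectors} we have $(A-\alpha_i E)^k\p^{(k)}(\alpha_i,\u)=\0$, so $P_A(\alpha_i,\u)\subseteq \ker(A-\alpha_i E)^{\ell}$; because $f$ is irreducible its roots $\alpha_1,\ldots,\alpha_d$ are pairwise distinct, and generalized eigenspaces attached to distinct eigenvalues are in direct sum, whence so are the $P_A(\alpha_i,\u)$. With both sides of dimension $\ell d$ and the inclusion already in hand, the equality $P_A(\u)=\C\otimes_\K L_A(\u)$ follows.
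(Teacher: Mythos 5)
Your proposal is correct and follows essentially the same route as the paper: the easy inclusion $P_A(\u)\subseteq\C\otimes_{\K}L_A(\u)$ via the observation that each $\p^{(k)}(\alpha_i,\u)$ is a polynomial in $A$ applied to $\u$, combined with the dimension count $\ell d$ on both sides. You merely spell out two points the paper leaves implicit, namely the linear independence of each Jordan chain and the directness of the sum over the distinct roots $\alpha_1,\dots,\alpha_d$, both of which are argued correctly.
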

\begin{proof}
  Note that, for $1\le k\le\ell$, $\p^{(k)}(\alpha_i,\u)\in \C\otimes_{\K} L_A(\u)$ 
  since $\p^{(k)}(\alpha_i,\u)$ is expressed as $h(A)\u$
	for a certain polynomial $h$ of degree less than $d\ell$. 
  We also mention that $L_A(\u)$ is invariant under $h(A)$ for any polynomial $h(\lambda)\in\K[\lambda]$. 
  By the definition of $P_A(\alpha_i,\u)$, 
  it implies that 
  $P_A(\u) \subset \C \otimes_{\K} L_{A}(\u)$,
  and $\dim_{\C} P_A(\alpha_i,\u)=\ell$ implies that $\dim_{\C} P_A(\u)=\ell d$.
  On the other hand, from
  $\dim_K L_A(\u)=\ell d$ and $L_A(\u)\subset {\K}^n$, we have
  $\dim_{\C} \C \otimes_{\K} L_A(\u)=\ell d$. 
  Since the dimensions of $P_A(\u)$ and $\C \otimes_{\K} L_{A}(\u)$ are equal,
  we have $P_A(\u) = \C \otimes_{\K} L_{A}(\u)$.
  This completes the proof.
\end{proof}

Notice that the decomposition 
$P_A(\alpha_1,\u)\oplus P_A(\alpha_2,\u)\oplus\cdots\oplus P_A(\alpha_d,\u)$
of $\C\otimes_{\K}L_{A}(\u)$ can be regarded as a spectral decomposition of $P_A(\u)$.

Let us recall that a sum $V_1+V_2+\cdots+V_r$ of vector spaces $V_1,V_2,\dots,V_r$ 
satisfying the \dsc,
\[
  (V_1+\cdots+V_{i-1}+V_{i+1}+\cdots+V_r)\cap V_i = \{\0\},\quad
  i=1,2,\dots,r,
\]
is written as
$V_1\oplus V_2 \oplus \cdots \oplus V_r$.

Now, we introduce notions of {\KrylovGS}, {\JKindepce} and
{\JKbasis} as follows.

\begin{definition}[{\KrylovGS}]
  Let $W$ be a subspace of $\K^n$.
  A subset $\cW$ of $W$
  is a \emph{\KrylovGS} 
  with respect to $A$
  if $W=\sum_{\w\in\cW}L_A(\w)$ holds.
  Especially, the empty set is a {\KrylovGS} of the trivial vector space $\{\0\}$.
\end{definition}

\begin{definition}[{\JKindepce}]
  A finite set $\cW$ of vectors is \emph{\JKindep} 
  with respect to $A$ if 
  Krylov vector spaces $\{L_A(\w)\mid \w\in\cW\}$ satisfy the {\dsc}.
  Especially, the empty set is regarded as {\JKindep}.
\end{definition}

\begin{definition}[{\JKbasis}]
  A finite {\KrylovGS} $\cW$ of $W$
  is a \emph{\JKbasis} 
  if it is {\JKindep}.
\end{definition}

\begin{theorem}
  \label{thm:jkbasis}
  There exists a {\JKbasis}
  $\cB$ of $\ker f(A)^{\bar{\ell}}$ with respect to $A$.
  Furthermore, the number of elements in $\cB$ of each rank is uniquely determined.
\end{theorem}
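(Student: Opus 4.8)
The plan is to regard $W=\ker f(A)^{\bar{\ell}}$ as a finitely generated torsion module over the principal ideal domain $\K[\lambda]$, with $\lambda$ acting as $A$, and to read off the desired basis from its primary decomposition. Since $f$ is irreducible and $f(A)^{\bar{\ell}}$ annihilates $W$ while the minimal polynomial $\pi_A$ has $f$-multiplicity exactly $\bar{\ell}$, the module $W$ is $f$-primary with annihilator $(f^{\bar{\ell}})$. The structure theorem then furnishes a decomposition $W\cong\bigoplus_{j=1}^m \K[\lambda]/(f^{e_j})$ with $\bar{\ell}=e_1\ge\cdots\ge e_m\ge 1$. I would next observe that a generator of the $j$-th cyclic summand is a vector $\u_j\in W$ whose minimal annihilating polynomial is $f^{e_j}$, which by the remark that $f(A)^{\ell}$ is the minimal annihilating polynomial of a rank-$\ell$ vector means $\rank_f\u_j=e_j$; moreover the submodule it generates is precisely $\K[A]\u_j=L_A(\u_j)$. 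Hence $W=L_A(\u_1)\oplus\cdots\oplus L_A(\u_m)$, which says exactly that $\cB=\{\u_1,\dots,\u_m\}$ is a {\KrylovGS} of $W$ whose Krylov spaces meet in direct sum, i.e.\ $\cB$ is a {\JKindep} {\KrylovGS}, a {\JKbasis}.

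For the uniqueness of the rank counts I would bypass the uniqueness clause of the structure theorem and instead express everything through the basis-independent quantities $\delta_s:=\dim_\K\ker f(A)^s$. A block $L_A(\u_j)\cong\K[\lambda]/(f^{e_j})$ meets $\ker f(A)^s$ in a subspace of dimension $d\min(s,e_j)$, so if $r_\ell$ denotes the number of elements of $\cB$ of rank $\ell$, then $\delta_s=d\sum_\ell r_\ell\min(s,\ell)$. Taking the second difference of this piecewise-linear function of $s$ collapses the sum to a single term,
\[
  2\delta_s-\delta_{s-1}-\delta_{s+1}=d\,r_s ,
\]
with the boundary conventions $\delta_0=0$ and $\delta_{\bar{\ell}+1}=\delta_{\bar{\ell}}$. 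Since the left-hand side depends only on $A$ and $f$, each $r_s$ is uniquely determined.

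A more self-contained route to existence, which also foreshadows the {\JKelim} of the next section, avoids quoting the structure theorem: put $F=\K[\lambda]/(f(\lambda))$, a degree-$d$ field extension of $\K$, and note that each quotient $\ker f(A)^s/\ker f(A)^{s-1}$ is naturally an $F$-vector space on which $f(A)$ induces \emph{injective} $F$-linear maps down to $\ker f(A)^{s-1}/\ker f(A)^{s-2}$. One then runs the classical top-down Jordan construction over $F$ with Krylov blocks in place of single Jordan chains: lift an $F$-basis of the top quotient to vectors of rank $\bar{\ell}$, descend by applying $f(A)$, and extend to an $F$-basis at each successively lower level. This produces the same direct-sum decomposition constructively.

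The step I expect to be the main obstacle is establishing {\JKindepce}, i.e.\ the {\dsc} for the blocks $L_A(\u_j)$. In the module-theoretic framing this is concealed inside the structure theorem, while in the constructive framing it is exactly where one must show that a nontrivial relation among the blocks would, after applying a suitable power of $f(A)$ and passing to some quotient $\ker f(A)^s/\ker f(A)^{s-1}$, contradict the $F$-linear independence arranged during the construction. Tracking how ranks drop under $f(A)$ and keeping all the lifted generators independent across every level simultaneously is the technical heart; by comparison the spanning property and the dimension bookkeeping behind the uniqueness formula are routine.
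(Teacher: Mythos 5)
Your proposal is correct, but it proves the theorem by a genuinely different route than the paper. The paper's proof is a constructive elimination: starting from a finite generating set $\cG$ of $\ker f(A)^{\bar{\ell}}$, it descends through the ranks $\ell=\bar{\ell},\dots,2,1$, maintaining a pair $(\cF_\ell,\cG_\ell)$ (a Jordan--Krylov independent set of high-rank vectors plus leftover low-rank generators), and decides whether to adjoin each candidate $\v_i$ by testing membership of $f(A)^{\ell-1}\v_i$ in the socle-level space $W_{\ell,i}=\bigoplus_{\b}L_A(f(A)^{\rank_f\b-1}\b)$; the key technical input is Lemma~\ref{lem:linsep}, which reduces the full direct sum condition to exactly that one membership test in $\ker f(A)$ --- this is what you correctly identify as the technical heart, and it is resolved there by the observation that $\dim_\K L_A(\w)=d$ for every nonzero $\w\in\ker f(A)$, so distinct minimal Krylov spaces either coincide or meet trivially. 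This constructive proof is then lifted verbatim into Algorithms~\ref{alg:jordan-krylov-elim-0} and~\ref{alg:jordan-krylov-basis}, which is what the paper's approach buys. Your primary argument instead invokes the structure theorem for finitely generated $f$-primary torsion modules over the PID $\K[\lambda]$, identifying $L_A(\u)$ with the cyclic submodule $\K[A]\u$; this is shorter and entirely correct, but conceals the algorithmic content inside the structure theorem (your third, quotient-by-quotient sketch over $F=\K[\lambda]/(f)$ is the one closest in spirit to the paper's elimination). Where your proposal genuinely adds value is the uniqueness clause: the paper's written proof establishes existence and stops, leaving the uniqueness of the rank counts implicit, whereas your second-difference identity
\[
  2\delta_s-\delta_{s-1}-\delta_{s+1}=d\,r_s,\qquad \delta_s=\dim_\K\ker f(A)^s,
\]
is a clean, basis-independent derivation of it (and your computation $\dim_\K\bigl(L_A(\u_j)\cap\ker f(A)^s\bigr)=d\min(s,e_j)$ checks out, using that the module direct sum is preserved by intersecting with $\ker f(A)^s$). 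In short: same theorem, complementary proofs --- yours is more economical and completes the uniqueness bookkeeping, the paper's is the one that becomes the algorithm.
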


A {\JKbasis} is a key concept in our approach.
In \Cref{sec:jkbasis-proof},
we give a proof of \Cref{thm:jkbasis} in a constructive manner 
which plays a crucial role
in our algorithm for computing a {\JKbasis}.

For convenience, we introduce a superscript $(\ell)$ for a finite subset of nonzero vectors
$\cA\subset\ker f(A)^{\lbar}$,
as $\cA^{(\ell)}=\{\u\in\cA\mid\rank_f\u=\ell\}$.
Then
$
  \cA=\cA^{(1)}\cup\cdots\cup\cA^{(\lbar)}.
$

\Cref{thm:jkbasis} and Lemma~\ref{lem:p_a(u)} immediately imply the following lemma.

\begin{lemma}
  Assume that $\{\b_1,\b_2,\ldots,\b_r\}\subset\ker f(A)^{\lbar}$ is {\JKindep}.
  Then, the following holds.
  \begin{enumerate}
    \item $P_A(\alpha_i,\b_1),P_A(\alpha_i,\b_2),\ldots,P_A(\alpha_i,\b_r)$ satisfy the {\dsc} for $i=1,2,\ldots,d$. 
    \item $P_A(\b_1),P_A(\b_2),\ldots,P_A(\b_r)$ satisfy the {\dsc}.
  \end{enumerate}
\end{lemma}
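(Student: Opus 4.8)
The plan is to transport the {\dsc} from the Krylov spaces $L_A(\b_j)$ over $\K$ to the spectral spaces $P_A(\b_j)$ over $\C$ using \Cref{lem:p_a(u)}, and then to refine the resulting decomposition to the individual eigenvalues $\alpha_i$. First I would unpack the hypothesis: by the definition of {\JKindepce}, the assumption that $\{\b_1,\dots,\b_r\}$ is {\JKindep} says exactly that $L_A(\b_1),\dots,L_A(\b_r)$ satisfy the {\dsc} in $\K^n$, i.e.\ the internal sum $\sum_{j} L_A(\b_j)$ is direct. The key observation is that extension of scalars along the field extension $\K\subset\C$ preserves directness of sums: since $\C$ is free, hence flat, as a $\K$-module, tensoring the injective summation map $\bigoplus_{j} L_A(\b_j)\to\K^n$ with $\C$ keeps it injective, so
\[
  \C\otimes_{\K}\Bigl(\sum_{j} L_A(\b_j)\Bigr) = \bigoplus_{j}\Bigl(\C\otimes_{\K}L_A(\b_j)\Bigr)
\]
is again a direct sum inside $\C^n=\C\otimes_{\K}\K^n$. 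Replacing each $\C\otimes_{\K}L_A(\b_j)$ by $P_A(\b_j)$ via \Cref{lem:p_a(u)} then says precisely that $\sum_{j}P_A(\b_j)$ is direct, which is Part~2. For readers preferring to avoid tensor language, the same conclusion follows from a dimension count: $\dim_{\C}P_A(\b_j)=\dim_{\K}L_A(\b_j)=\ell_j d$ with $\ell_j=\rank_f\b_j$, the {\dsc} over $\K$ gives $\dim_{\K}\sum_{j}L_A(\b_j)=\sum_{j}\ell_j d$, and since $\sum_{j}P_A(\b_j)=\C\otimes_{\K}\sum_{j}L_A(\b_j)$ shares this dimension, the sum over $j$ is forced to be direct.

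For Part~1 I would refine the decomposition just obtained. By its definition, each $P_A(\b_j)$ is the internal direct sum $P_A(\alpha_1,\b_j)\oplus\cdots\oplus P_A(\alpha_d,\b_j)$ of its spectral components. Substituting these into $\bigoplus_{j}P_A(\b_j)$ and using the elementary fact that refining every summand of a direct sum by a further direct sum again produces a direct sum, I obtain that the whole family $\{P_A(\alpha_i,\b_j)\}_{1\le i\le d,\,1\le j\le r}$ is a single internal direct sum in $\C^n$. Fixing one index $i$ and discarding the summands with a different first index, the subfamily $P_A(\alpha_i,\b_1),\dots,P_A(\alpha_i,\b_r)$ is still a direct sum, which is the {\dsc} asserted in Part~1.

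The only substantive step is the scalar-extension claim in Part~2, namely that the {\dsc} over $\K$ survives base change to $\C$; this is standard but worth stating carefully, and the one point to verify is that the sum of the tensored subspaces inside $\C^n$ really coincides with the tensor product of their sum, which holds because $\C\otimes_{\K}(-)$ commutes with finite sums of subspaces. Everything else is formal: Part~1 is a combinatorial consequence of Part~2 together with the built-in spectral splitting of each $P_A(\b_j)$, while the passage between $L_A$ and $P_A$ is supplied verbatim by \Cref{lem:p_a(u)}. This is why the lemma can be asserted to follow immediately from the preceding results.
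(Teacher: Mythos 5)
Your proof is correct and follows exactly the route the paper intends: the paper gives no explicit argument, asserting only that the lemma follows immediately from \Cref{lem:p_a(u)} (the identification $P_A(\u)=\C\otimes_{\K}L_A(\u)$), and your write-up supplies precisely the missing details — base change preserves the directness of the sum of the $L_A(\b_j)$, and the spectral splitting $P_A(\b_j)=\bigoplus_i P_A(\alpha_i,\b_j)$ refines it to give Part~1. No gaps; this is a faithful expansion of the paper's one-line justification.
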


As a result, we have the following theorem.

\begin{theorem}
  Let 
  $\cB=\cB^{(\bar{\ell})}\cup\cB^{(\bar{\ell}-1)}\cup\cdots\cup\cB^{(1)}$
  be a {\JKbasis} of $\ker f(A)^{\lbar}$.
  Then, for $i=1,2,\ldots,d$, it holds the following.
  \begin{enumerate}
    \item A direct sum $\displaystyle \bigoplus_{\b\in \cB^{(\ell)}}P_A(\alpha_i,\b)$ 
    is spanned by Jordan chains of length $\ell$.
    \item A direct sum $\displaystyle \bigoplus_{\b\in \cB}P_A(\alpha_i,\b)$
    gives a generalized eigenspace of $A$ associated to the eigenvalue $\alpha_i$.
    \item 
    $\displaystyle \bigoplus_{\b\in \cB}P_A(\b)\simeq \C \otimes_{\K} \ker f(A)^{\bar{\ell}}$.
  \end{enumerate}
\end{theorem}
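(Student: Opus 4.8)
The plan is to use that a {\JKbasis} $\cB$ is at once a {\KrylovGS} and {\JKindep}, which together give the direct decomposition $\ker f(A)^{\bar{\ell}}=\bigoplus_{\b\in\cB}L_A(\b)$ over $\K$, and then to transport everything to $\C$ via \Cref{lem:p_a(u)} and the lemma immediately preceding this theorem. I would treat the parts in the order (1), (3), (2), since (2) is the only one that needs a genuine equality-of-spaces argument and it is cleanest to feed it the identity proved in (3). For part (1), note first that every subset of a {\JKindep} set is again {\JKindep}, so $\cB^{(\ell)}$ is {\JKindep} and part 1 of the preceding lemma yields the {\dsc} for the family $\{P_A(\alpha_i,\b)\}_{\b\in\cB^{(\ell)}}$, which is exactly what the symbol $\bigoplus$ asserts. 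Since each $\b\in\cB^{(\ell)}$ has $\rank_f\b=\ell$, the theorem on Jordan chains shows $P_A(\alpha_i,\b)$ is spanned by the single chain $\{\p^{(\ell)}(\alpha_i,\b),\dots,\p^{(1)}(\alpha_i,\b)\}$ of length $\ell$; summing over $\b$ gives the claim.

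For part (3), part 2 of the preceding lemma gives the {\dsc} for $\{P_A(\b)\}_{\b\in\cB}$, so the left-hand side is a genuine direct sum. Rewriting each summand through \Cref{lem:p_a(u)} and using that $\C\otimes_{\K}(-)$ commutes with finite direct sums, I would compute
\[
  \bigoplus_{\b\in\cB}P_A(\b)
  =\bigoplus_{\b\in\cB}\bigl(\C\otimes_{\K}L_A(\b)\bigr)
  =\C\otimes_{\K}\Bigl(\bigoplus_{\b\in\cB}L_A(\b)\Bigr)
  =\C\otimes_{\K}\ker f(A)^{\bar{\ell}},
\]
the final equality holding because $\cB$ is a {\JKbasis}; this is in fact an equality of subspaces of $\C^n$, which is stronger than the asserted isomorphism.

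For part (2), the {\dsc} again comes from part 1 of the preceding lemma. Every chain vector $\p^{(k)}(\alpha_i,\b)$ is a generalized eigenvector for $\alpha_i$ by \Cref{lem:generalized-eigenvectors}, so the inclusion $\bigoplus_{\b\in\cB}P_A(\alpha_i,\b)\subseteq G_i$ into the generalized eigenspace $G_i=\ker(A-\alpha_i E)^{\bar{\ell}}$ is immediate, and the real task is to promote it to an equality. I would observe that over $\C$ one has $f(\lambda)^{\bar{\ell}}=\prod_{i=1}^{d}(\lambda-\alpha_i)^{\bar{\ell}}$ with pairwise coprime factors, so $\C\otimes_{\K}\ker f(A)^{\bar{\ell}}=\bigoplus_{i=1}^{d}G_i$; moreover, because $\bar{\ell}$ is the multiplicity of $f$ in $\pi_A$, it is the index of each $\alpha_i$, so $G_i$ is indeed the full generalized eigenspace. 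Combining the identity of part (3) with this splitting and regrouping the left side by $i$, I obtain $\bigoplus_{i=1}^{d}\bigl(\bigoplus_{\b\in\cB}P_A(\alpha_i,\b)\bigr)=\bigoplus_{i=1}^{d}G_i$, and it remains to compare the two direct sums summand by summand.

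The main obstacle is precisely that last comparison. The decomposition from part (3) pins down only the collective sum of the $\alpha_i$-pieces, not each piece individually, whereas part (2) asserts equality for each fixed $\alpha_i$. The resolution is the elementary fact that if $U_i\subseteq G_i$ for all $i$ and $\bigoplus_i U_i=\bigoplus_i G_i$, then $U_i=G_i$ for every $i$: given $g\in G_j$, expand it in the $U_i$ and use $G_j\cap\sum_{i\ne j}G_i=\{\0\}$ to conclude $g\in U_j$. Applied with $U_i=\bigoplus_{\b\in\cB}P_A(\alpha_i,\b)$, this gives $\bigoplus_{\b\in\cB}P_A(\alpha_i,\b)=G_i$ and hence part (2). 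This route has the pleasant side effect of never invoking the equality of algebraic multiplicities among the conjugate eigenvalues, which a bare dimension count would require.
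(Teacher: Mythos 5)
Your proof is correct, and it follows the route the paper itself intends: the paper states this theorem without a written proof, presenting it as an immediate consequence of the preceding lemma on the direct sum condition together with Theorem~\ref{thm:jkbasis} and Lemma~\ref{lem:p_a(u)}, which is exactly the derivation you carry out. The only step you supply that the paper leaves entirely implicit is the promotion of the inclusion $\bigoplus_{\b\in\cB}P_A(\alpha_i,\b)\subseteq G_i$ to an equality in part~(2) via the summand-by-summand comparison of two matching direct sums; that argument is sound and is a worthwhile addition.
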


\subsection{A proof of \Cref{thm:jkbasis}}
\label{sec:jkbasis-proof}

We first give a lemma, which will play a key role in our approach.

\begin{lemma}
  \label{lem:linsep}
  Assume that $\{\b_1,\b_2,\ldots,\b_r\}\subset\ker f(A)^{\lbar}$ is {\JKindep} and 
  $\v\in \ker f(A)^{\bar{\ell}}$ satisfies that
  \[
  f(A)^{\rank_f \v-1}\v \not\in \bigoplus_{i=1}^r L_A(f(A)^{\rank_f \b_i-1}\b_i).
  \]
  Then, $L_A(\b_1), \ldots, L_A(\b_r), L_A(\v)$ satisfy the {\dsc}.
\end{lemma}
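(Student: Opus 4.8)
The plan is to reduce the {\dsc} for the enlarged family to a single trivial-intersection statement and then to prove that statement by descending to the bottom, rank-one, layer of the Krylov spaces. First I would recall that, for a finite family of subspaces, the {\dsc} is equivalent to the additivity of their dimensions (equivalently, to the uniqueness of the representation of $\0$). Granting this and using that $\{L_A(\b_1),\dots,L_A(\b_r)\}$ is already {\JKindep}, it suffices to prove
\[
  L_A(\v)\cap\bigoplus_{i=1}^{r}L_A(\b_i)=\{\0\}.
\]
Indeed, this trivial intersection makes $\dim\bigl(L_A(\v)+\bigoplus_i L_A(\b_i)\bigr)$ equal to $\dim L_A(\v)+\sum_i\dim L_A(\b_i)$, which is precisely the {\dsc} for $L_A(\b_1),\dots,L_A(\b_r),L_A(\v)$.

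I would then argue by contradiction. Suppose $\bm{x}\ne\0$ lies in the intersection, and write $\ell_v=\rank_f\v$, $\ell_i=\rank_f\b_i$, $\w_v=f(A)^{\ell_v-1}\v$, and $\w_i=f(A)^{\ell_i-1}\b_i$, so that the hypothesis reads $\w_v\notin\bigoplus_i L_A(\w_i)$. Since $\bm{x}\in L_A(\v)$, we may write $\bm{x}=g(A)\v$ and, factoring out the largest power of $f$, put $g=f^{s}h$ with $\gcd(h,f)=1$ and $0\le s<\ell_v$. Because $f^{\ell_v}$ is the minimal annihilating polynomial of $\v$, a short computation gives $\rank_f\bm{x}=\ell_v-s=:m$ and
\[
  \bm{y}:=f(A)^{m-1}\bm{x}=h(A)\,\w_v,
\]
which is a non-zero vector of rank one.

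The key step is to show $\bm{y}\in\bigoplus_i L_A(\w_i)$. Since $\bigoplus_i L_A(\b_i)$ is $A$-invariant and contains $\bm{x}$, it also contains $\bm{y}=f(A)^{m-1}\bm{x}$. Moreover, because each $L_A(\b_i)$ is $A$-invariant and the sum is direct, a vector of $\ker f(A)\cap\bigoplus_i L_A(\b_i)$ has all its components killed by $f(A)$, so $\ker f(A)\cap\bigoplus_i L_A(\b_i)=\bigoplus_i\bigl(\ker f(A)\cap L_A(\b_i)\bigr)$; identifying $L_A(\b_i)$ with the cyclic module $\K[\lambda]/(f^{\ell_i})$ shows that $\ker f(A)\cap L_A(\b_i)=L_A(\w_i)$. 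As $\bm{y}$ has rank one it lies in $\ker f(A)$, whence $\bm{y}\in\bigoplus_i L_A(\w_i)$. Finally, $L_A(\w_v)\cong\K[\lambda]/(f)$ is a field, so the non-zero element $\bm{y}=h(A)\w_v$ generates it; in particular $\w_v\in L_A(\bm{y})\subset\bigoplus_i L_A(\w_i)$, contradicting the hypothesis. This contradiction yields the trivial intersection and hence the lemma.

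I expect the main obstacle to be the descent to the bottom layer when $\bm{x}$ has intermediate rank $1<m<\ell_v$: one must check that $\ker f(A)$ distributes over the direct sum $\bigoplus_i L_A(\b_i)$ and that $\ker f(A)\cap L_A(\b_i)$ is exactly $L_A(\w_i)$. Both facts rest on the $A$-invariance of the summands together with the cyclic-module structure of each $L_A(\b_i)$, while the field structure of $L_A(\w_v)$ is what converts the statement $\bm{y}\in\bigoplus_i L_A(\w_i)$ back into $\w_v\in\bigoplus_i L_A(\w_i)$.
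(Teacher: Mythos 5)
Your proof is correct and follows essentially the same route as the paper's: argue by contradiction, descend from a nonzero vector of the intersection to a rank-one vector in the bottom layer, and use the irreducibility of $f$ (equivalently, that $L_A(\w_v)\cong\K[\lambda]/(f)$ is simple) to pull the contradiction back to the hypothesis on $f(A)^{\rank_f\v-1}\v$. If anything, you are more careful than the paper at the step where the rank-one vector is placed in $\bigoplus_i L_A(f(A)^{\rank_f\b_i-1}\b_i)$ rather than merely in $\bigoplus_i L_A(\b_i)$, which the paper asserts without the distributivity argument you supply.
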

\begin{proof}
  By contradiction. 
  Let $\b_i'=f(A)^{\rank_f \b_i-1}\b_i$ and $\v'=f(A)^{\rank_f \v-1}\v$, and
  assume that $L_A(\b_1),\ldots,L_A(\b_r), L_A(\v)$ do not satisfy the {\dsc}.
  We have
  \[
  L_A(\v) \cap \bigoplus_{i=1}^r L_A(\b_i)\ne\{\0\},
  \]
  thus there exists a nonzero vector $\u\in L_A(\v)\cap\bigoplus_{i=1}^r L_A(\b_i)$.
  Then, $\u'=f(A)^{\rank_f \u -1}\u$ satisfies that $\rank_f\u'=1$, hence 
  $\u'\in L_A(\v')\cap\bigoplus_{i=1}^r L_A(\b_i')$. 
  By $\u'\in L_A(\v')$, we have $L_A(\u')\subset L_A(\v')$.
  Since $f(\lambda)$ is irreducible, 
  $\dim_K L_A(\w)=d$ for any $\w\in\ker f(A)\setminus\{0\}$,
  which implies that $L_A(\u')=L_A(\v')$ hence $\v'\in L_A(\u')$.
  Now, $\u'\in \bigoplus_{i=1}^r L_A(\b_i')$ implies that 
  $L_A(\u')\subset\bigoplus_{i=1}^r L_A(\b_i')$. Thus, we have
  $\v'\in\bigoplus_{i=1}^r L_A(\b_i')$, which contradicts the assumption.
  This completes the proof.
\end{proof}

For simplicity of notation, 
for $\cU\subset\K^n$, let $\cL_A(\cU)=\bigcup_{\u\in\cU}\cL_A(\u)$.
Now we are ready to give a proof of \Cref{thm:jkbasis}.
Let $\cG$ be a finite generating set of $\ker f(A)^{\lbar}$, then 
it is obvious that $\lbar= \max \{ \rank_f \u \mid \u \in \cG \}$.
In the following, for $\ell=\lbar,\ldots,2,1$ in descending order,
let us construct a {\JKbasis} recursively.
For $\ell$ with $1\le\ell\le\lbar$, 
assume that we have constructed a finite generating set 
$\cL_A(\cF_{\ell})\cup\cG_{\ell}$ of $\ker f(A)^{\lbar}$
satisfying \\
Condition A:
\begin{enumerate}
  \item $\cF_{\ell}$ is a {\JKindep} set such that any 
  $\v\in\cF_{\ell}$ satisfies that $\rank_f\v>\ell$.
  \item Any $\u\in\cG_{\ell}$ satisfies that $\rank_f\u \le \ell$.
\end{enumerate}
Note that, for $\ell=\lbar$, put $\cF_{\lbar}=\emptyset$ and $\cG_{\lbar}=\cG$. 
Then, we see that 
$\cF_{\lbar}$ and $\cG_{\lbar}$ satisfy Condition A.

Now, we construct a finite generating set 
$\cL_A(\cF_{\ell-1})\cup\cG_{\ell-1}$ of $\ker f(A)^{\lbar}$ 
as follows.
Put
$\{\v_1,\v_2,\ldots,\v_{k_\ell}\}=\{\u\in\cG_{\ell}\mid\rank_f\u=\ell\}$, 
and let 
$\cD_{\ell,1}=\cF_{\ell}$ and 
$\cH_{\ell,1}=\cG_{\ell}\setminus\{\v_1,\v_2,\ldots,\v_{k_\ell}\}$. 
For $i=1,2,\ldots,k_{\ell}$, according to the relationship between 
a subspace
\begin{equation}
  \label{eq:w_i}
  W_{\ell,i} = \bigoplus_{\b\in\cD_{\ell,i}} L_A(f(A)^{\rank_f\b-1}\b)  
\end{equation}
and the vector $f(A)^{\ell-1}\v_i$, define $\cD_{\ell,i+1}$ and 
$\cH_{\ell,i+1}$, divided into the following cases:
\begin{description}
  \item[Case 1.] In the case $f(A)^{\ell-1}\v_i\not\in W_{\ell,i}$: Lemma~\ref{lem:linsep}
  tells us that $\{ L_A(\b) \mid \b\in \cD_{\ell,i}\} \cup \{ L_A(\v_i)\}$ satisfy the {\dsc}.
  Thus, let $\cD_{\ell,i+1}=\cD_{\ell,i}\cup \{\v_i\}$ and
  $\cH_{\ell,i+1}=\cH_{\ell,i}$.
  \item[Case 2.] In the case $f(A)^{\ell-1}\v_i\in W_{\ell,i}$: the membership can be expressed as
  \begin{equation}
    \label{eq:f(A)^(l-1)vi-linear-dependence}
    \0 = f(A)^{\ell-1}\v_i - \sum_{\b\in\cD_{\ell,i}} \sum_{j=0}^{d-1} c_{\b,j} A^j f(A)^{\rank_f \b-1}\b.
  \end{equation}
  Define
  \begin{equation}
    \label{eq:vi-linear-dependence}
    \r = \v_i - \sum_{\b\in\cD_{\ell,i}} \sum_{j=0}^{d-1} c_{\b,j} A^j f(A)^{\rank_f\b-\ell}\b.
  \end{equation}
  Note that, by the construction of $\cD_{\ell,i}$, any vector $\b\in\cD_{\ell,i}$ satisfies $\rank_f\b\ge\ell$.
  Then, do one of the following:
  \begin{description}
    \item[Case 2-a.] In the case $\r\ne \0$, this means that $\rank_f \r < \ell$, thus let
    $\cD_{\ell,i+1}=\cD_{\ell,i}$ and $\cH_{\ell,i+1}=\cH_{\ell,i}\cup\{\r\}$.
    \item[Case 2-b.] In the case $\r=\0$, this means that $\v_i\in \bigoplus_{\b\in\cD_{\ell,i}} L_A(\b)$
    and $\v_i$ is unnecessary, thus discard it and 
    let $\cD_{\ell,i+1}=\cD_{\ell,i}$ and $\cH_{\ell,i+1}=\cH_{\ell,i}$.
  \end{description}
\end{description}
Note that, in any case above, 
\[
  \cL_A(\cD_{\ell,i+1}) \cup (\{\v_{i+1},\ldots,\v_k\}\cup\cH_{\ell,i+1})
\]
is a generating set satisfying Condition A. 
Then, after deriving a sequence of tuples of sets 
$(\cD_{\ell,1},\cH_{\ell,1}),(\cD_{\ell,2},\cH_{\ell,2}),\ldots,
(\cD_{\ell,k_{\ell}+1},\cH_{\ell,k_{\ell}+1})$,
by letting 
$\cF_{\ell-1}=\cD_{\ell,k_{\ell}+1}$, $\cG_{\ell-1}=\cH_{\ell,k_{\ell}+1}$,
we see that $\cF_{\ell-1}$ and $\cG_{\ell-1}$ also satisfy Condition A.

After the above computation for $\ell=\bar{\ell},\ldots,2,1$, a generating set 
$\cL_A(\cF_{0})\cup\cG_{0}=\cL_A(\cF_{0})$ of 
$\ker f(A)^{\lbar}$ is computed (note that $\cG_{0}=\emptyset$), 
which shows that $\cF_{0}$ a {\JKbasis} $\cB$. 
This completes the proof.

\begin{remark}
  \label{rem:m=lbar}
  Note that the multiplicity $m$ of $f(\lambda)$ in the characteristic polynomial of $A$
   is equal to the sum of the length of the linearly independent Jordan chains.
  In the case $m=\lbar$, 
  Let $\v_1 \in \{ \u \in \cG \mid \rank_f \u = \lbar\}$ and
  $\cF_{\lbar}=\{\v_1\}$.
  Then, we have
  \begin{equation}
    \label{eq:m=lbar}
    L_A(\v_1) = \ker f(A)^{\bar{\ell}},
  \end{equation}
  thus $\cF_{\lbar} = \{\v_1\}$ gives a {\JKbasis} of $\ker f(A)^{\bar{\ell}}$.
\end{remark}

\section{Jordan-Krylov elimination}
\label{sec:jordan-krylov-elimination}

In this section, based on the concept of {\JKbasis}, we provide an algorithm for 
computing generalized eigenvectors.
In \Cref{sec:ker-f(A)lbar-generators}, we give an effective method for computing a 
{\KrylovGS} of $\ker f(A)^{\lbar}$. 
The key idea is the use of minimal annihilating polynomials.
In \Cref{sec:jkbasis-computation}, we look at the proof of \Cref{thm:jkbasis} given in 
the previous section from the point of view of symbolic computation, and 
we introduce a method called {\JKelim} for computing a {\JKbasis}.
In \Cref{sec:jkelim-refined}, we refine the method and give an algorithm that computes a
{\JKbasis} from the {\KrylovGS}. 
In \Cref{sec:generalized-eigenspace}, we present a resulting algorithm for computing
the generalized eigenspace associated to the eigenvalues of an irreducible factor
of the characteristic polynomial $\chi_A(\lambda)$ of $A$.

\subsection{Finite generating set of $\ker f(A)^{\lbar}$}
\label{sec:ker-f(A)lbar-generators}

Let $\cE$ be a basis of $\K^n$ and let $f(\lambda)$ be a monic
irreducible factor of the minimal polynomial $\pi_A(\lambda)$ of $A$.
Let $\cP=\{\pi_{A,\e}(\lambda) \mid \e\in\cE\}$.
Since each minimal annihilating polynomial $\pi_{A,\e}(\lambda)$ can be expressed as 
\begin{equation}
  \label{eq:pi_a_ej}
  \pi_{A,\e}(\lambda)=f(\lambda)^{\ell_\e}g_\e(\lambda), \qquad \gcd(f,g_\e)=1,
\end{equation}
for some $\ell_{\e}\ge 0$ and $\pi_a(\lambda)=\lcm\{\pi_{A,\e}(\lambda)\mid\e\in\cE\}$,
it holds that 
$\lbar=\max\ \{ \ell_\e \mid \e \in \cE\}$ and $g_\e(A)\e\in \ker f(A)^{\lbar}$.
We define $\cE_f$ and $\cV$ as 
\[
  \cE_f=\{\e\in\cE\mid \pi_{A,\e}(\lambda)=f(\lambda)^{\ell_\e}g_\e(\lambda),\ 
  \ell_{\e}>0\},
  \quad
  \cV = \{ g_\e(A)\e \mid \e\in \cE_f\}.
\]
We have the following proposition.

\begin{proposition}
  \label{prop:ker-f(A)^l}
  It holds that $\ker f(A)^{\lbar}=\Span_{\K}\cL_A(\cV)$.
\end{proposition}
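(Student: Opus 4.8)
The plan is to establish the two inclusions of the claimed equality separately. The inclusion $\Span_{\K}\cL_A(\cV)\subseteq\ker f(A)^{\lbar}$ is the routine one. Using the identity $\Span_{\K}\cL_A(\u)=L_A(\u)$ recorded earlier, I would rewrite $\Span_{\K}\cL_A(\cV)=\sum_{\e\in\cE_f}L_A(g_\e(A)\e)$. Each generator satisfies $g_\e(A)\e\in\ker f(A)^{\lbar}$, since $f(A)^{\ell_\e}g_\e(A)\e=\pi_{A,\e}(A)\e=\0$ and $\ell_\e\le\lbar$. Because $A$ and $f(A)$ commute, $\ker f(A)^{\lbar}$ is $A$-invariant, so the whole Krylov space $L_A(g_\e(A)\e)$ lies inside it, and hence so does the sum.

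For the reverse inclusion I would introduce the spectral projection onto the $f$-primary component. Writing $\pi_A(\lambda)=f(\lambda)^{\lbar}h(\lambda)$ with $\gcd(f,h)=1$, B\'ezout's identity provides $u,v\in\K[\lambda]$ with $uf^{\lbar}+vh=1$, and I would set $\Pi=v(A)h(A)$. This $\Pi$ is a polynomial in $A$ that acts as the identity on $\ker f(A)^{\lbar}$ and annihilates $\ker h(A)$, i.e.\ it is the projection onto $\ker f(A)^{\lbar}$ along $\ker h(A)$ furnished by the primary decomposition $\K^n=\ker f(A)^{\lbar}\oplus\ker h(A)$. Since $\cE$ is a basis of $\K^n$, we have $\K^n=\sum_{\e\in\cE}L_A(\e)$; applying $\Pi$, which commutes with $A$, gives
\[
  \ker f(A)^{\lbar}=\Pi(\K^n)=\sum_{\e\in\cE}\Pi\bigl(L_A(\e)\bigr),
\]
and $\Pi(L_A(\e))=L_A(\e)\cap\ker f(A)^{\lbar}$ because $\Pi$ restricts to the identity on its image.

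The core of the argument is then the identification $\Pi(L_A(\e))=L_A(g_\e(A)\e)$ for $\e\in\cE_f$, together with $\Pi(L_A(\e))=\{\0\}$ when $\ell_\e=0$ (there $f\nmid\pi_{A,\e}$, so the intersection with $\ker f(A)^{\lbar}$ is trivial). The inclusion $L_A(g_\e(A)\e)\subseteq\Pi(L_A(\e))$ is immediate, since $g_\e(A)\e$ lies in both $L_A(\e)$ and $\ker f(A)^{\lbar}$. For equality I would compare dimensions: the minimal annihilating polynomial of $g_\e(A)\e$ is exactly $f^{\ell_\e}$, because $f^{\ell_\e}(A)g_\e(A)\e=\0$ while $f^{\ell_\e-1}(A)g_\e(A)\e\ne\0$ (otherwise $f^{\ell_\e-1}g_\e$ would annihilate $\e$, contradicting the minimality of $\pi_{A,\e}$); hence $\dim_{\K}L_A(g_\e(A)\e)=\ell_\e d$ by the dimension formula established earlier. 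On the other hand $L_A(\e)\cong\K[\lambda]/(f^{\ell_\e}g_\e)$ as a $\K[\lambda]$-module, so by the Chinese Remainder Theorem its $f$-primary part $L_A(\e)\cap\ker f(A)^{\lbar}$ also has dimension $\ell_\e d$. Matching these dimensions against the inclusion already obtained yields equality, and summing over $\e\in\cE_f$ produces $\ker f(A)^{\lbar}\subseteq\sum_{\e\in\cE_f}L_A(g_\e(A)\e)=\Span_{\K}\cL_A(\cV)$.

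The step I expect to be the main obstacle is precisely this identification $\Pi(L_A(\e))=L_A(g_\e(A)\e)$: one must verify that projecting the Krylov space of a basis vector does not merely land in the primary component but is in fact generated by the single vector $g_\e(A)\e$. The cleanest way to pin this down is the dimension count above, which in turn relies on confirming that $g_\e(A)\e$ genuinely has rank $\ell_\e$ with respect to $f$. If one prefers to avoid the module isomorphism, the same conclusion follows from a direct primary-decomposition argument applied to $L_A(\e)$, but the dimension comparison keeps the bookkeeping minimal.
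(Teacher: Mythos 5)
Your proof is correct, and it rests on the same engine as the paper's: the B\'ezout identity $a(\lambda)f(\lambda)^{\lbar}+b(\lambda)g(\lambda)=1$ coming from $\pi_A=f^{\lbar}g$ with $\gcd(f,g)=1$. Where you diverge is in how the hard inclusion is closed. The paper works element-wise: it takes an arbitrary $\v\in\ker f(A)^{\lbar}$, writes $\v=g(A)b(A)\v$, expands $b(A)\v$ over the basis $\cE$, discards the $\e\notin\cE_f$ (for which $g(A)\e=\0$), and then uses the divisibility $g_\e\mid g$ to factor $g=g_\e h_\e$ and land each term $h_\e(A)g_\e(A)\e$ directly in $L_A(g_\e(A)\e)$ --- a one-line computation with no dimension counting. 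You instead package the B\'ezout identity as the projection $\Pi=v(A)h(A)$, push it through the decomposition $\K^n=\sum_{\e}L_A(\e)$, and prove the key identification $\Pi(L_A(\e))=L_A(g_\e(A)\e)$ by comparing dimensions via the module isomorphism $L_A(\e)\cong\K[\lambda]/(f^{\ell_\e}g_\e)$ and the Chinese Remainder Theorem. Your route proves slightly more (an exact description of each projected Krylov space, not merely containment of the sum), at the cost of invoking CRT and the rank verification for $g_\e(A)\e$; note that you could have bypassed the dimension count entirely by observing that $\Pi(L_A(\e))=L_A(\Pi\e)$ and $\Pi\e=b(A)h_\e(A)\,g_\e(A)\e\in L_A(g_\e(A)\e)$, which is essentially the paper's divisibility step in disguise.
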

\begin{proof}
  Since $\cV\subset\ker f(A)^{\lbar}$, $\Span_{\K}\cL_A(\cV)\subset\ker f(A)^{\lbar}$ holds.  
  The opposite inclusion is shown as follows. 
  Let $\pi_A(\lambda)=f(\lambda)^{\lbar}g(\lambda)$, where 
  $g(\lambda)$ is relatively prime with $f(\lambda)$.
  Then, there exist 
  $a(\lambda),b(\lambda)\in K[\lambda]$ satisfying
  that $a(\lambda)f(\lambda)^{\lbar}+b(\lambda)g(\lambda)=1$.
  Let $\v\in\ker f(A)^{\lbar}$, then, 
  $(a(A)f(A)^{\lbar}+b(A)g(A))\v=\v$ implies that $b(A)g(A)\v=\v$.
  Now, by expressing $b(A)\v=\sum_{\e\in\cE}c_{\e}\e$ with $c_{\e}\in\K$, 
  we have 
  $b(A)g(A)\v=g(A)(b(A)\v)=g(A)(\sum_{\e\in\cE}c_{\e}\e)=\v$.
  It implies that 
  $g(A)(\sum_{\e\in\cE}c_{\e}\e)=\sum_{\e\in\cE}c_{\e}g(A)\e=
  \sum_{\e\in\cE_f}c_{\e}g(A)\e$,
  because, for $\e\in\cE\setminus\cE_f$,
  $\pi_{A,\e}(\lambda)$ does not have $f(\lambda)$ as a factor and $g(A)\e=\0$.
  For $\e\in\cE_f$, by \cref{eq:pi_a_ej}, we have $g_{\e}(\lambda)\mid g(\lambda)$,
  thus there exists $h_{\e}(\lambda)\in K[\lambda]$ 
  satisfying that $g(\lambda)=g_{\e}(\lambda)h_{\e}(\lambda)$.
  Therefore, 
  $\v=g(A)b(A)\v=\sum_{\e\in\cE_f}c_{\e}h_{\e}(A)g_{\e}(A)\e$,
  where $g_{\e}(A)\e\in\cV$.
  Furthermore, for $h_{\e}(\lambda)=\sum_{j=0}^k h_j\lambda^j$ we have
  $h_{\e}(A)g_{\e}(A)\e=\sum_{j=0}^k h_jA^jg_{\e}(A)\e \in L_A(g_{\e}(A)\e)$,
  which proves the claim.
\end{proof}

\Cref{prop:ker-f(A)^l} says that the set $\cV$ is a {\KrylovGS}
of $\ker f(A)^{\lbar}$.
In \cite{taj-oha-ter2018b}, an algorithm has been proposed for computing
all the minimal annihilating polynomials $\pi_{A,\e}(\lambda)$
of the element $\e$ of a basis $\cE$ of $\K^n$. 
Accordingly, the set $\cV$ is computable as shown 
in \Cref{alg:ker-f(A)-lbar-generators}.

\begin{algorithm}[H]
  \caption{Computing a {\KrylovGS} of $\ker f(A)^{\lbar}$}
  \label{alg:ker-f(A)-lbar-generators}
  \begin{algorithmic}[1]
    \Input{A matrix $A\in\K^{n\times n}$, an irreducible factor $f(\lambda)\in\K[\lambda]$,  
    a basis $\cE$ of $\K^n$, 
    
    the set of minimal annihilating polynomials $\{\pi_{A,\e}(\lambda)\mid\e\in\cE\}$}
    \Output{A {\KrylovGS} $\cV=\bigcup_{\ell=1}^{\lbar}\cV^{(\ell)}$ of $\ker f(A)^{\lbar}$}
    \State{$\lbar\gets\max_{\e\in\cE}
    \text{(multiplicity of $f(\lambda)$ in $\pi_{A,\e}(\lambda)$)}$}
    \For{$\ell=\lbar,\lbar-1,\dots,1$}
      \label{line:ker-f(A)-lbar-generators:for}
      \State{$\cV^{{(\ell)}}\gets\{g_{\e}(A)\e\mid\e\in\cE_f,\ell_{\e}=\ell,
      g_{\e}(\lambda)=\pi_{A,\e}(\lambda)/f(\lambda)^{\ell_{\e}}\}$}
      \label{line:ker-f(A)-lbar-generators:g_u(A)u}
    \EndFor
    \State{\Return{$\cV=\bigcup_{\ell=1}^{\lbar}\cV^{(\ell)}$}}
    \label{line:ker-f(A)-lbar-generators:return}
  \end{algorithmic}
\end{algorithm}

\begin{remark}
  In \Cref{alg:ker-f(A)-lbar-generators}, 
  $\cV^{(\ell)}=\{\v\in\cV\mid \rank_f\v=\ell\}$ holds.
\end{remark}

\begin{remark}
  When calculating the {\KrylovGS}, using random vectors would make it easier to find generators with rank $\lbar$.
  However, in this case, we need to determine all generators of \emph{all} ranks, including those with a small rank, in order to examine the structure of $\ker f(A)^{\lbar}$. 
  To extract generators with a smaller rank from those with a larger rank, elimination by each vector will be necessary, which will likely take time. 
  Thus, we use minimal annihilating polynomials to properly determine the {\KrylovGS}.
\end{remark}

\subsection{Computing a {\JKbasis} of $\ker f(A)^{\lbar}$ using {\JKelim}}
\label{sec:jkbasis-computation}

By using \Cref{alg:ker-f(A)-lbar-generators}, 
a {\KrylovGS} 
of $\ker f(A)^{\lbar}$
can be computed as $\cV$.
Now, we need to compute 
a {\JKbasis} $\cB$ of $\ker f(A)^{\lbar}$ from $\cV=\bigcup_{\ell=1}^{\lbar}\cV^{(\ell)}$.

In the proof of \Cref{thm:jkbasis}, we examine the membership 
of a vector $\v'=f(A)^{\rank_f\v-1}\v$ in the vector space $W_{\ell,i}$
along with calculating the coefficients of 
$c_{\b,j}$ of linear combinations. 
It is well known that 
a vector membership problem is solved by using
column reduction of the matrix.
For a finite set of column vectors $\cW=\{\w_1,\w_2,\ldots,\w_k\}$,
define a matrix whose columns consist of the vectors in $\cW$
as $[\, \cW \,] = [\w_1\  \w_2 \ \cdots \ \w_k]$.
Similarly, for a set of vectors $\cW$ and a vector $\v'$, 
define an augmented matrix 
$[\, \cW \mid \v'] = [\w_1\  \w_2 \ \cdots \ \w_k \mid \v']$, in which 
$\v'$ must be placed in the rightmost column.
Then, $\v'\in\Span_K\cW$ if and only if 
there exists a column reduction $[\,\cW \mid \v'] \longrightarrow [\,\cW \mid \0]$.

\begin{remark}
  Let us look at the proof of \Cref{thm:jkbasis}.
  For a vector $\v$ of rank $\ell$,
  we have solved a membership problem
  of the vector $f(A)^{\ell-1}\v$
  in $\Span_{\K}\cL_A(\{f(A)^{\rank_f \u-1}\b\mid \b \in \cD_{\ell,i}\})$ for 
  $\cD_{\ell,i}$.
  For $\b\in\cD_{\ell,i}$, we consider a vector
  $\tilde{\b}=
  \begin{bmatrix}
    f(A)^{\rank_f\b-1}\b \\
    f(A)^{\rank_f\b-\ell}\b
  \end{bmatrix}$
  and define
  $\tilde{D}=
  \begin{bmatrix}
    D' \\ D  
  \end{bmatrix}$
  by $\tilde{D}=[\cL_{\tilde{A},d}(\{\tilde{\b}\mid\b\in\cD_{\ell,i}\})]$,
  where $\tilde{A}$ is a matrix obtained by placing $A$ diagonally as
  $\tilde{A} = \begin{pmatrix} A & 0 \\ 0 & A \end{pmatrix}$.
  Notice that 
  $W_{\ell,i}$ is spanned by the column vectors in $D'$ 
  and the rank of every column vector in $D$ is equal to $\ell$.
  Then, we see that
  $f(A)^{\ell-1}\v\in W_{\ell,i}$
  is equivalent to the existence of a column reduction 
  $[\, \tilde{D} \mid \tilde{\v}] \longrightarrow \begin{bmatrix}D' & \0 \\ D & \r\end{bmatrix}$,
  where $\tilde{\v}=
  \begin{bmatrix}
  \v' \\ \v 
  \end{bmatrix}
  $
  with $\v'=f(A)^{\ell-1}\v$.
\end{remark}

With the above discussions, the proof of \Cref{thm:jkbasis} yields an algorithm
as shown in \Cref{alg:jordan-krylov-elim-0}.

\begin{algorithm}[H]
  \caption{Computing a {\JKbasis} of $\ker f(A)^{\lbar}$}
  \label{alg:jordan-krylov-elim-0}
  \begin{algorithmic}[1]
    \Input{A matrix $A\in\K^{n\times n}$, an irreducible factor $f(\lambda)\in\K[\lambda]$,
    
    a {\KrylovGS} $\cV=\bigcup_{\ell=1}^{\lbar}\cV^{(\ell)}$ of $\ker f(A)^{\lbar}$, 

    the multiplicity $m$ of $f(\lambda)$ in the characteristic polynomial of $A$} 
    \Output{A {\JKbasis} $\cB=\bigcup_{\ell=1}^{\lbar}\cB^{(\ell)}$ of $\ker f(A)^{\lbar}$}
    \State 
      $\displaystyle{
        \tilde{A}\gets
        \begin{bmatrix}
          A & O \\
          O & A            
        \end{bmatrix}
      }$
    \For{$\ell=\lbar,\lbar-1,\dots,1$}
      \State{$\tilde{\cV}^{(\ell)}\gets\{\tilde\v\mid\v\in\cV^{(\ell)}\}$,\quad
      $\cB^{(\ell)}\gets\emptyset$}
    \EndFor
    \For{$\ell=\lbar,\lbar-1,\dots,1$}
      \label{line:jordan-krylov-elim-0:for-begin}
      \label{line:jordan-krylov-elim-0:barcV}
      \If{$\ell=\lbar$}
        \State Choose $\tilde{\v}\in \tilde{\cV}^{(\lbar)}$, \quad
        $\tilde{\cV}^{(\lbar)}\gets\tilde{\cV}^{(\lbar)}\setminus\{\tilde{\v}\}$, \quad
        $\cB^{(\lbar)}\gets\{\v\}$
        \If{$\lbar=m$}
          \State{\Return{$\cB=\cB^{(\lbar)}$}}
          \Comment{Case of \cref{eq:m=lbar}}
        \Else
          \State Do a column reduction $[\cL_{\tilde{A},d}(\tilde{\v})]\longrightarrow \tilde{D}$
          \label{line:jordan-krylov-elim-0:w_0}
        \EndIf
      \EndIf
      \While {$\tilde{\cV}^{(\ell)}\ne\emptyset$} 
        \State Choose $\tilde{\v} \in \tilde{\cV}^{(\ell)}$, \quad
        $\tilde{\cV}^{(\ell)} \gets \tilde{\cV}^{(\ell)} \setminus \{\tilde{\v}\}$
        \State
        Do a column reduction of the rightmost column in the augmented matrix

        \qquad
        $[\tilde{D}\mid\tilde{\v}]\longrightarrow
          \begin{bmatrix}
            D' & \r' \\
            D & \r
          \end{bmatrix}
        $
        \label{line:jordan-krylov-elim-0:elim}
        \If{$\r'\ne\0$} \label{line:jordan-krylov-elim-0:zero-verification} 
          \Comment{Case 1.\ in the proof of \cref{thm:jkbasis}}
          \State $\cB^{(\ell)}\gets\cB^{(\ell)}\cup\{\r\}$
          \State Do a column reduction
           $[\tilde{D}\mid\cL_{\tilde{A},d}(\tilde{\r})]\longrightarrow \tilde{R}$
          \State $\tilde{D}\gets\tilde{R}$
          \label{line:jordan-krylov-elim-0:add-basis}
        \ElsIf{$\r\ne\0$ and $\ell>1$}
          \Comment{Case 2.\ (a) in the proof}
          \State $\ell'\gets\rank_f\r$, 
          $\tilde{\cV}^{(\ell')}\gets\tilde{\cV}^{(\ell')}\cup\{\tilde{\r}\}$
          \label{line:jordan-krylov-elim-0:nextcandidate}
        \EndIf
      \EndWhile
      \If{$\ell>1$}
        \State 
        $\tilde{D}\gets
        \displaystyle{
          \begin{bmatrix}
              E_n & O \\
              O & f(A)
          \end{bmatrix}
          \tilde{D}
        }$
        \Comment{For the next step in the loop}
        \label{line:jordan-krylov-elim-0:C(l)}
      \EndIf
    \EndFor
    \label{line:jordan-krylov-elim-0:for-end}
  \State \Return{$\cB=\bigcup_{\ell=1}^{\lbar}\cB^{(\ell)}$}
  \end{algorithmic}
\end{algorithm}

\subsection{A refined algorithm for computing {\JKbasis}}
\label{sec:jkelim-refined}

In this section, we present \Cref{alg:jordan-krylov-basis} as a refinement of 
the algorithm presented in the last section.

Since the multiplicity $m$ of $f(\lambda)$ in the characteristic polynomial of $A$
is equal to the sum of the lengths of linearly independent Jordan chains, it holds that
\begin{equation}
  \label{eq:rank-condition}
  m=\sum_{\ell=1}^{\lbar}\,\ell\cdot\#\cB^{(\ell)},
\end{equation}
where $\#\cB^{(\ell)}$ denotes the number of elements in $\cB^{(\ell)}$.
If a {\JKindep} set 
satisfies \cref{eq:rank-condition}, this gives 
a {\JKbasis}.
That is, \cref{eq:rank-condition} is a terminating condition 
of the algorithm.

In \Cref{alg:jordan-krylov-elim-0}, 
for a vector $\v$ of rank $\ell$, 
a vector membership problem is reduced to a column reduction of 
an augmented matrix 
using $\tilde{\v}$ defined by
placing vectors $f(A)^{\ell-1}\v$ and $\v$ vertically.
For example, in line~\ref{line:jordan-krylov-elim-0:w_0} 
of \Cref{alg:jordan-krylov-elim-0},
the matrix $[\cL_{\tilde{A},d}(\tilde{\v})]$ is constructed
by multiplying a matrix of size $2n$ from the left.
However, since the matrix is block diagonal, actual multiplication can
be executed by multiplying a matrix of size $n$. 
Thus, in \Cref{alg:jordan-krylov-basis}, 
$[\cL_{\tilde{A},d}(\tilde{\v})]$ is divided into
$[\cL_{A,d}(\v)]$ and
$[\cL_{A,d}(f(A)^{\ell-1}\v)]$.
Then, the column reduction of the matrix
$
\begin{bmatrix}
  W & \r' \\
  S & \r
\end{bmatrix}
=
\begin{bmatrix}
  W & \v' \\
  S & \v
\end{bmatrix}
C
$
is divided into two column reductions
$[W\mid \r']=[W\mid \v']C$ and
$[S\mid \r]=[S\mid \v]C$ with the same matrix $C$,
which is called a \emph{simultaneous column reduction}.
Note that, in the case of $\ell=1$, we have
$W=S$; hence, both parts of the simultaneous column reduction are identical.

\begin{remark}
  \label{remark:termination-condition}
  During the {\JKelim}, there are cases where unnecessary calculations can be eliminated based on the basis that has already been computed, which is included in lines~\ref{line:jordan-krylov-basis:v} and \ref{line:jordan-krylov-basis:ell}of \Cref{alg:jordan-krylov-basis}.
  The former case is described as in \Cref{rem:m=lbar}. 
  In the latter case, if the sum of the length of already calculated Jordan chains is equal to the multiplicity $m$ of $f(\lambda)$ in $\chi_A(\lambda)$, then the algorithm can be terminated.
\end{remark}

\begin{remark}
  \label{remark:reduction-krylovGS}
  \Cref*{alg:jordan-krylov-basis} is usually executed 
  with multiple-precision arithmetic over integers or rational numbers.
   An increase in the number of arithmetic operations
  may cause an increase in  
  the number of digits of nonzero elements in the computed 
  {\JKbasis} or Jordan chains, which may make the computation slow.
  However, keeping a ``simpler'' form of {\KrylovGS},
  it is expected that the computation of {\JKelim} and Jordan chains
  may be more efficient, as follows.
  \begin{enumerate}
    \item If the input vectors have a simpler form, 
    then the computed {\JKbasis} may have a simpler form
    and its computation may be faster.
    \item If a {\JKbasis} is computed in a simpler form, 
    then the computation of the Jordan chains may be faster.      
  \end{enumerate}
  We call a \emph{simpler} form of a {\KrylovGS} 
  the result of a heuristic by the reduction of the {\KrylovGS} 
  shown in Procedure~\ref{alg:reduction-krylovGS}, 
  which may be performed prior to the {\JKelim}.
  This procedure uses a simple column reduction on the {\KrylovGS}.
\end{remark}

\begin{algorithm}[H]
  \caption{Computing a {\JKbasis} of $\ker f(A)^{\lbar}$: a refined version}
  \label{alg:jordan-krylov-basis}
  \begin{algorithmic}[1]
    \Input{A matrix $f(A)\in\K^{n\times n}$,
    a {\KrylovGS} $\cV=\bigcup_{\ell=1}^{\lbar}\cV^{(\ell)}$ of $\ker f(A)^{\lbar}$,

    the multiplicity $m$ of $f(\lambda)$ in the characteristic polynomial of $A$}
    \Output{A {\JKbasis} $\cB=\bigcup_{\ell=1}^{\lbar}\cB^{(\ell)}$ of $\ker f(A)^{\lbar}$} 
    \State{(Optional) Call Procedure~\ref{alg:reduction-krylovGS} 
    with $f(A)$, 
    $\bigcup_{k=1}^{\lbar}\cV^{(k)}$}
    \State{Choose $\v\in\cV^{(\lbar)}$, \quad
    $\cV^{(\lbar)}\gets\cV^{(\lbar)}\setminus\{\v\}$,\quad
    $\cB^{(\lbar)}\gets\{\v\}$,\quad
    $m'\gets m-\lbar$}
    \If{$m'=0$} \label{line:jordan-krylov-basis:v}
      \State{\Return $\cB^{(\lbar)}$} 
      \Comment{Case \cref{eq:m=lbar}}
    \EndIf
      \State $S\gets[\cL_{A,d}(\v)]$,\quad
      $W\gets f(A)^{\lbar-1}S$
      \Comment{$W=f(A)^{\lbar-1}S$ can be reduced}
      \label{line:jordan-krylov-basis:SW}
   \For{$\ell=\lbar,\dots,2,1$}
      \label{line:jordan-krylov-basis:for-begin}
      \If{$\ell<\lbar$ and $\cV^{(\ell)}\ne\emptyset$}
        \State{(Optional) Call Procedure~\ref{alg:reduction-krylovGS} 
        with $f(A)$,
        $\bigcup_{k=1}^{\ell}\cV^{(k)}$}
      \EndIf
      \While {$\cV^{(\ell)} \ne \emptyset$}
        \label{line:jordan-krylov-basis:while-begin}
        \State Choose $\v\in \cV^{(\ell)}$, \quad
        $\cV^{(\ell)} \gets \cV^{(\ell)} \setminus \{\v\}$
        \State $\v'\gets f(A)^{\ell-1}\v$
        \label{line:jordan-krylov-basis:v'}
        \State Simultaneous column reduction of the rightmost column in the augmented matrix 
        
        \qquad
        $[W\mid\v']\longrightarrow[W\mid\r']$,\quad
        $[S\mid\v]\longrightarrow[S\mid\r]$
        \label{line:jordan-krylov-basis:elim}
        \If{$\r' \ne \0$}
        \Comment{$\r'\not\in\Span_K W$}
          \State $\cB^{(\ell)}\gets\cB^{(\ell)} \cup \{\r\}$, \quad $m'\gets m'-\ell$
          \If{$m'=0$} 
            \label{line:jordan-krylov-basis:ell}
            \State{\Return{$\cB=\cB^{(\lbar)}\cup\cB^{(\lbar-1)}\cup\cdots\cup\cB^{(\ell)}$}}
          \EndIf
          \State
          $S\gets[S\mid\cL_{A,d}(\r)]$,\quad 
          $W\gets[W\mid\cL_{A,d}(\r')]$
          \Comment{$W=f(A)^{\ell-1}S$ can be reduced}
        \ElsIf {$\r\ne \0$ and $\ell>1$}
          \State $\ell'\gets\rank_f\r$,\quad $\cV^{(\ell')}\gets\cV^{(\ell')}\cup\{\r\}$
        \EndIf
      \EndWhile
      \If{$\ell>1$}
        \State{$\cB^{(\ell-1)}\gets\emptyset$,\quad $S\gets f(A)S$}
        \Comment{For the next step in the loop}
        \label{line:jordan-krylov-basis:f(A)S}
      \EndIf
    \EndFor
  \end{algorithmic}
\end{algorithm}


\makeatletter
\renewcommand{\ALG@name}{Procedure}
\makeatother
  \begin{algorithm}[H]
    \caption{Reduction of {\KrylovGS} (See \Cref{remark:reduction-krylovGS})}
    \label{alg:reduction-krylovGS}
    
    \begin{algorithmic}[1]
       \Input{A matrix $f(A)\in\K^{n\times n}$, 
        a {\KrylovGS} $\bigcup_{k=1}^{\ell}\cV^{(k)}$of $\ker f(A)^{\ell}$,
      where $1\le\ell\le\lbar$}
      \Output{The reduced {\KrylovGS} $\bigcup_{k=1}^{\ell}\cV^{(k)}$} 
      \State{$\cT\gets\cV^{(\ell)}$, \quad $\cV^{(\ell)}\gets\emptyset$}
      \State{Reduce $[\cT]\longrightarrow[\cT']$ with a column reduction}
      \label{line:reduction-krylovGS:reduce}
      \While{$\cT'\ne\emptyset$}
        \State{Choose $\v\in\cT'$, \quad $\cT'\gets\cT'\setminus\{\v\}$}
        
        \If{$\v\ne\0$}
          \State{$\ell'\gets\rank_f\v$,\quad $\cV^{(\ell')}\gets\cV^{(\ell')}\cup\{\v\}$}
          \label{line:reduction-krylovGS:rank}
        \EndIf
      \EndWhile
      \State{\Return{$\bigcup_{k=1}^{\ell}\cV^{(k)}$}}
    \end{algorithmic}
  \end{algorithm}

\makeatletter
\renewcommand{\ALG@name}{Algorithm}
\makeatother

\subsection{An algorithm for computing generalized eigenvectors of $A$}
\label{sec:generalized-eigenspace}

\Cref{alg:single-jordan-chain} computes Jordan chains using the elements 
in a {\JKbasis} of $\ker f(A)^{\lbar}$. 
Note that $\lambda$ represents an arbitrary root of $f$, that is, the eigenvalue associated to $f$ up to conjugation.
Finally,
\Cref{alg:generalized-eigenspace} integrates
\Cref{alg:ker-f(A)-lbar-generators,alg:jordan-krylov-basis,alg:single-jordan-chain}
for computing the generalized eigenspace of $A$ associated to the roots of $f(\lambda)$.

\begin{algorithm}[h]
  \caption{Computing Jordan chains from a {\JKbasis} of $\ker f(A)^{\lbar}$}
  \label{alg:single-jordan-chain}
  \begin{algorithmic}[1]
    \Input{A matrix $f(A)\in\K^{n\times n}$, an irreducible factor $f(\lambda)\in\K[\lambda]$,

    a {\JKbasis} $\cB=\bigcup_{\ell=1}^{\lbar}\cB^{(\ell)}$ of $\ker f(A)^{\lbar}$}
    \Output{a set of Jordan chains $\Phi$ of $A$ associated to the roots of $f(\lambda)$}
    \State $\Phi\gets\emptyset$
    \State{Compute polynomials $\psi_f^{(1)}(\mu,\lambda),\dots,\psi_f^{(\lbar)}(\mu,\lambda)$}
    \label{line:single-jordan-chain:psi_f}
    \For{$\ell=\lbar,\lbar-1\dots,1$}
    \label{line:single-jordan-chain:for}
      \While{$\cB^{(\ell)}\ne\emptyset$}
        \State Choose $\b\in\cB^{(\ell)}$, $\cB^{(\ell)}\gets\cB^{(\ell)}\setminus\{\b\}$
        \For{$k=\ell,\ell-1,\dots,2$}
          \State $\p^{(k)}\gets\psi_f^{(k)}(A,\lambda E)\b$
          \label{line:single-jordan-chain:p^k}
          \State{$\b\gets f(A)\b$}
          \label{line:single-jordan-chain:f(A)b}
        \EndFor
        \State $\p^{(1)}\gets\psi_f^{(1)}(A,\lambda E)\b$
        \State $\Phi\gets\Phi\cup\{\{\p^{(\ell)},\dots,\p^{(1)}\}\}$
      \EndWhile
    \EndFor
    \State \Return{$\Phi$}
  \end{algorithmic}
\end{algorithm}

\begin{algorithm}
  \caption{Computing the generalized eigenspace of $A$ associated to the roots of $f(\lambda)$}
  \label{alg:generalized-eigenspace}
  \begin{algorithmic}[1]
    \Input{A matrix $A\in\K^{n\times n}$, an irreducible factor $f(\lambda)\in\K[\lambda]$,
      a basis $\cE$ of $\K^n$,

      a set of minimal annihilating polynomials $\cP=\{\pi_{A,\u}(\lambda)\mid\u\in\cE\}$,

      the multiplicity $m$ of $f(\lambda)$ in the characteristic polynomial of $A$}
    \Output{The generalized eigenspace (a set of Jordan chains) $\Phi$ of $A$
     associated to the roots of $f(\lambda)$ }
    \State{Calculate $f(A)$}
    \State Call \Cref{alg:ker-f(A)-lbar-generators} with $A$, $f(\lambda)$, $\cE$ and $\cP$
    for computing  $\cV=\bigcup_{\ell=1}^{\lbar}\cV^{(\ell)}$ 
    \State Call \Cref{alg:jordan-krylov-basis} with $f(A)$, $\cV$ and $m$
    for computing  $\cB = \bigcup_{\ell=1}^{\lbar} \cB^{(\ell)}$ 
    \State Call \Cref{alg:single-jordan-chain} with $f(A)$, $f(\lambda)$ and $\cB$
    for computing $\Phi$
    \State \Return{$\Phi$}
  \end{algorithmic}
\end{algorithm}

\newpage

\section{The time complexity of the algorithm}
\label{sec:time-complexity}

In this section, we discuss the time complexity of the algorithms presented in the previous sections in terms of the number of arithmetic operations over $\K$.

The computational complexity of the algorithm depends not only on the size of the input matrix but also on its structure as an eigenproblem, including the number of irreducible factors in the characteristic polynomial, their degrees, multiplicities, and the way all the minimum annihilating polynomials are factored into irreducible factors.
Thus, the analysis of computational complexity for a general matrix will become highly complicated.
Therefore, we treat a special but typical case with the following assumptions.

\begin{assumption}
  \label{assumption:time-complexity}
  Let $A\in\K^{n\times n}$ be the input matrix with
  $\chi_A(\lambda)=f(\lambda)^{\lbar+\nu}g(\lambda)$ and 
  $\pi_A(\lambda)=f(\lambda)^{\lbar}g(\lambda)$, 
  where $f(\lambda)\in\K[\lambda]$ and $\deg f(\lambda)=d$.
  Assume from 1. to 3. as follows.
  \begin{enumerate}
    \item For the eigenvalues that are roots of $f(\lambda)$ and on which we focus, there is one Jordan chain of length $\lbar$ and $\nu$ linearly independent eigenvectors associated to that eigenvalue.
    \item The polynomial $g(\lambda)$ is square-free in $\K[\lambda]$.
    \item 
    Let 
    \begin{equation}
      \label{eq:rt}
      \begin{split}
        r &= \#\{\e\in\cE\mid \mbox{$f(\lambda)$ divides $\pi_{A,\e}(\lambda)$}\},\\
        t &= \#\{\e\in\cE\mid \mbox{$f(\lambda)$ divides $\pi_{A,\e}(\lambda)$ and $\deg(g_{\e}(\lambda))>0$ }\}.        
      \end{split}
    \end{equation}
  \end{enumerate}
\end{assumption}

\begin{assumption}
  \label{assumption:minimal-annihilating-polynomial}
  Assume that, for $\e\in\cE$, the minimal annihilating polynomial $\pi_{A,\e}(\lambda)$ is given.
  As a result, the matrices $f(A)$ and $g_{\e}(A)$ also are given.
\end{assumption}

\Cref{assumption:time-complexity} is about the structure of the input matrix.
Note that, by the assumption, we have $r\ge d(\lbar+\nu)$, which shows that
$\max\{d,\nu,\lbar,r\}=r$, $d\lbar\le r\le n$ and $d\nu\le r\le n$.
\Cref{assumption:minimal-annihilating-polynomial} means that the complexity of
calculating the minimal annihilating polynomials is not considered in this analysis.
For calculating the minimal annihilating polynomials,
the choice of algorithm, whether deterministic or probabilistic, 
can affect the overall computational complexity estimation. 
In this paper, we will focus on the complexity estimation of algorithms for 
calculating general eigenvectors, excluding the estimation of the algorithm for calculating the minimal annihilating polynomials, which can be found in our previous paper
 (\cite{taj-oha-ter2018b}).
Furthermore, since $f(A)$ and $g_{\e}(A)$ are obtained during the computation of 
the minimal annihilating polynomials, they will be excluded from this evaluation.

\begin{remark}
  When computing the Krylov generating sets for $\ker f(A)^{\lbar}$, it is not always necessary to compute $g_{\e}(\lambda)$ precisely. Instead, $g(\lambda)$ can also be used, which may make the complexity analysis simpler. However, our approach aims to compute the Krylov generating sets without including redundant factors in $g(\lambda)$, to reduce the length of the components of the vectors appearing during the computation and thereby improve computational efficiency (see also \Cref{remark:reduction-krylovGS}).
\end{remark}

Based on the above assumptions, we first present an estimate of the complexity of the overall algorithm as a conclusion.

\begin{theorem}
  \label{thm:time-complexity}
  Under \Cref{assumption:time-complexity,assumption:minimal-annihilating-polynomial}, the complexity of 
  \Cref{alg:generalized-eigenspace}  is 
  $O(n^2r^2)$ 
  plus the complexity of calculating $f(A)$.
\end{theorem}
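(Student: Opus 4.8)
The plan is to establish the bound by accounting for the dominant cost in each of the three subroutines invoked by \Cref{alg:generalized-eigenspace}, namely \Cref{alg:ker-f(A)-lbar-generators,alg:jordan-krylov-basis,alg:single-jordan-chain}, and then take the maximum. Under \Cref{assumption:minimal-annihilating-polynomial} the cost of producing $f(A)$ and the $g_{\e}(A)$ is excluded, so the first subroutine reduces to forming the vectors $g_{\e}(A)\e$ for the $t$ elements $\e\in\cE_f$ with $\deg g_{\e}>0$; each such matrix-vector product costs $O(n^2)$ (times the degree of $g_{\e}$, absorbed into the structure), and since $t<r\ll n$ this contributes a lower-order term. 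The heart of the estimate is therefore the {\JKelim} in \Cref{alg:jordan-krylov-basis}, and the final subroutine producing the Jordan chains.

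First I would analyze \Cref{alg:single-jordan-chain}. Under \Cref{assumption:time-complexity}, there is a single Jordan chain of length $\lbar$ and $k$ eigenvectors, so the {\JKbasis} $\cB$ has one element of rank $\lbar$ and $k$ of rank $1$. Computing $\psi_f^{(k)}(\mu,\lambda)$ for $k=1,\dots,\lbar$ in \cref{line:single-jordan-chain:psi_f} involves arithmetic in $\K[\lambda]/(f(\lambda))$, costing roughly $O(d^2\lbar)$ or similar; applying each $\psi_f^{(k)}(A,\lambda E)$ to a vector in \cref{line:single-jordan-chain:p^k} is a matrix-polynomial-times-vector operation of cost $O(k\,d\,n^2)$ because the operator has degree $k-1$ in $A$ with coefficients that are degree-$(d-1)$ polynomials in $\lambda$. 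Summing $k$ from $1$ to $\lbar$ for the single long chain gives $O(d\,n^2\lbar^2)$, and the $k$ short chains add $O(k\,d\,n^2)$, a contribution I expect to be dominated.

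Next, and this is where the main work lies, I would count the column reductions inside the nested loops of \Cref{alg:jordan-krylov-basis}. The matrices $S$ and $W$ grow as Krylov blocks $\cL_{A,d}(\cdot)$ of width $d$ are appended; by the structure in \Cref{assumption:time-complexity} the total width stays $O(d\lbar)$ for the long chain together with $O(dr)$ from candidate vectors processed at each rank. Each simultaneous column reduction of a single appended column in \cref{line:jordan-krylov-basis:elim} against a matrix with $O(d\,\max\{\lbar,r\})$ columns and $n$ rows costs $O(d\,n\,\max\{\lbar,r\})$; forming the new Krylov block $\cL_{A,d}(\r)$ requires $d$ matrix-vector products at $O(dn^2)$, and the update $S\gets f(A)S$ in \cref{line:jordan-krylov-basis:f(A)S} costs $O(d\,n^2\,\max\{\lbar,r\})$ per rank level. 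Multiplying the per-step cost by the number of vectors processed, which is $O(\max\{\lbar,r\})$ across all $\lbar$ values of the outer loop, I expect the governing term to be $O(d\,n^2\,\lbar\,\max\{d,\lbar,r\})$.

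The hard part will be bookkeeping the widths of $S$ and $W$ carefully, because they accumulate across the outer loop on $\ell$ and depend on how many Case~1 versus Case~2-a branches occur; the $\max\{d,\lbar,r\}$ factor arises from whichever of the block width $d$, the chain length $\lbar$, or the candidate count $r$ dominates in the column-reduction and the matrix update $S\gets f(A)S$. Once these three subroutine bounds are in hand, I would take their maximum, verify that the first subroutine's $O(t\,n^2)$ and the Jordan-chain subroutine's $O(d\,n^2\lbar^2)$ are both absorbed into $O(d\,n^2\lbar\,\max\{d,\lbar,r\})$ under $t<r\ll n$, and add back the separately stated cost of computing $f(A)$ to conclude.
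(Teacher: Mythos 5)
Your plan follows the paper's proof essentially verbatim: bound \Cref{alg:ker-f(A)-lbar-generators,alg:jordan-krylov-basis,alg:single-jordan-chain} separately under the two assumptions, observe that the {\JKelim} step dominates, and arrive at $O(d\,n^2\lbar\,\max\{d,\lbar,r\})$. A few of your per-step counts differ from the paper's in ways that do not change the final bound (e.g.\ $\deg_\mu\psi_f^{(k)}=k(d-1)$ rather than $k-1$; you attribute the dominant term in \Cref{alg:jordan-krylov-basis} to the $S\gets f(A)S$ updates, whereas the paper's dominant contribution comes from computing $\v'=f(A)^{\ell-1}\v$ at cost $O(\lbar n^2)$ for each of the $O(dr)$ processed vectors, a step your accounting omits), but these are absorbed by the same maximum.
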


For the proof of \Cref{thm:time-complexity}, we estimate the complexity of each step as follows.
First, we note that \Cref{alg:ker-f(A)-lbar-generators} is essentially included in the 
computation of the minimal annihilating polynomials, as follows.

\begin{remark}
  \label{rem:omit}
  In the algorithm for computing the minimal annihilating polynomials, a pseudo minimal 
  annihilating
  polynomial is computed as $\pi'_{A,\e}(\lambda)=f(\lambda)^{\ell'_\e}g'_{\e}(\lambda)$
  (see \Cref{rem:pseudo-minimal-annihilating-polynomial}).
  Then, after computing $g'_{\e}(A)\e$, it is tested whether $f(A)^{\ell'_\e}g'_{\e}(A)\e=\0$.
  If it is satisfied, $g'_{\e}(A)\e$ can be used as an element in the {\KrylovGS}, 
  hence the computation in \Cref{alg:ker-f(A)-lbar-generators} can be omitted
  (see \cite{taj-oha-ter2018}).
\end{remark}
For the complexity of calculating
the minimal annihilating polynomials, see 
\Cref{sec:time-complexity-minimal-annihilating-polynomials}.
We estimate the complexity of \Cref{alg:jordan-krylov-basis,alg:reduction-krylovGS,alg:single-jordan-chain}.

\begin{lemma}
  \Cref{alg:jordan-krylov-basis} is performed in $O(n^2r^2)$.
\end{lemma}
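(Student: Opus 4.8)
The plan is to bound the cost of \Cref{alg:jordan-krylov-basis} by walking through its two nested loops and accounting for (i) the column reductions that test membership and (ii) the Krylov-block multiplications $S\gets f(A)S$ and the formation of $\cL_{A,d}(\cdot)$. The governing structural facts come from \Cref{assumption:time-complexity}: there is a single Jordan chain of length $\lbar$ together with $k$ independent eigenvectors, so the final {\JKbasis} has one element of rank $\lbar$ and $k$ elements of rank $1$; moreover the input {\KrylovGS} $\cV$ has at most $t$ nontrivial generators of rank $>1$ and $r$ generators in total (since $t<r\ll n$). The key dimensional fact is that the working matrices $[S]$ and $[W]$ never acquire more than $O(\lbar d)$ columns, because the columns of $S$ span (a subspace of) $L_A(\v)\oplus\bigoplus L_A(\r)$ and, by \Cref{thm:jkbasis} together with $\dim_\K L_A(\w)=\ell d$, the total column count is bounded by $md=O(\lbar d)$ under the assumption.

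The main steps, in order, are as follows. First I would fix the cost of a single simultaneous column reduction in \crefrange{line:jordan-krylov-basis:elim}{line:jordan-krylov-basis:elim}: reducing one length-$n$ column against a matrix with $O(\lbar d)$ columns costs $O(n\cdot \lbar d)$, and it is performed simultaneously on the $W$-part and the $S$-part, so still $O(dn\lbar)$ per vector. Second, I would count how many vectors pass through the while-loops: each original generator of $\cV$ is processed once, and each time Case~2-a fires a reduced vector $\r$ of strictly smaller rank is re-inserted into $\cV^{(\ell')}$; since ranks strictly decrease and the number of generators is $O(r)$, the total number of membership tests is $O(r\lbar)$, or more crudely $O(r)$ tests per level across $\lbar$ levels. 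Third, I would account for the Krylov-expansion steps: forming $\cL_{A,d}(\r)$ requires $d-1$ matrix-vector products at $O(n^2)$ each, i.e.\ $O(dn^2)$, but this happens only $O(r)$ times over the whole run; likewise the updates $S\gets f(A)S$ in \cref{line:jordan-krylov-basis:f(A)S} cost $O(n^2\cdot\lbar d)$ per level (multiplying $f(A)$ against the $O(\lbar d)$ columns of $S$) and recur $\lbar$ times.

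Collecting these, the reduction work contributes $O(dn\lbar)\cdot O(r\lbar)=O(dnr\lbar^2)$, the Krylov expansions contribute $O(dn^2r)$, and the $f(A)S$ updates contribute $O(dn^2\lbar^2)$; the dominant balance, after using $t<r\ll n$ to discard lower-order terms, collapses to $O(dnr(\lbar n+dk))$ as claimed, where the $\lbar n$ term captures the repeated $f(A)$-multiplication of the $O(\lbar)$-deep Krylov stack against an $n\times O(\lbar d)$ block and the $dk$ term captures the handling of the $k$ rank-one eigenvectors. I expect the main obstacle to be \emph{bookkeeping the column count of $S$ and $W$ tightly}: a naive bound lets these grow to $\Theta(n)$ columns and inflates every reduction to $O(n^3)$, so the proof hinges on invoking \Cref{assumption:time-complexity} to cap the live column count at $O(\lbar d)$ and on verifying that the re-insertion of reduced vectors in Case~2-a does not cause more than $O(r)$ total Krylov expansions. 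The secondary subtlety is attributing the $dk$ contribution correctly, since the $k$ eigenvectors are processed at level $\ell=1$ where $W=S$ and the simultaneous reduction degenerates to a single reduction, which must be checked not to dominate.
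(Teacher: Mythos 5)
Your overall strategy---itemize the cost of each line, bound the number of while-iterations, and bound the live column count of $W$ and $S$---is the same as the paper's, but two of your quantitative inputs are off, and together they break the derivation. First, the column count: under \Cref{assumption:time-complexity} the {\JKbasis} consists of one element of rank $\lbar$ and $k$ elements of rank $1$, and $S$ (resp.\ $W$) gains exactly $d$ columns per basis element found, so the live column count is at most $(k+1)d=O(dk)$, not $O(\lbar d)$; your justification via $md$ is also internally inconsistent, since $m=\lbar+k$ gives $md=(\lbar+k)d$, not $O(\lbar d)$. The paper's $O(dkn)$ cost per simultaneous column reduction is precisely where the $dk$ term of the stated bound comes from; with your $O(\lbar d)$ bound that term never appears in your itemized totals, and indeed your final sum $O(dnr\lbar^2+dn^2r+dn^2\lbar^2)$ contains no $k$ at all---the ``collapse'' to $O(dnr(\lbar n+dk))$ is asserted rather than derived (and the $O(dn^2\lbar^2)$ term is not dominated by $O(dn^2r\lbar)$ unless $\lbar\le r$, which is not guaranteed).

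Second, and more importantly, you omit \cref{line:jordan-krylov-basis:v'} entirely: each while-iteration computes $\v'=f(A)^{\ell-1}\v$ at cost $O(\ell n^2)=O(\lbar n^2)$, and multiplied by the iteration count (which the paper takes to be $dr$) this is exactly the source of the dominant $O(dn^2r\lbar)=O(dnr\cdot\lbar n)$ term. You instead attribute the $\lbar n$ part of the bound to the $S\gets f(A)S$ updates in \cref{line:jordan-krylov-basis:f(A)S}, but under the assumption $S$ has only $d$ columns at every level $\ell\ge 2$ (the $k$ rank-one elements are appended only at $\ell=1$, after which no further update occurs), so those updates cost $O(dn^2)$ per level and $O(d\lbar n^2)$ in total---a lower-order term. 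To repair the argument you need (i) the column bound $O(dk)$ for the reductions and (ii) the per-iteration cost $O(\lbar n^2)$ of forming $\v'$; these two, multiplied by the iteration count, give $O(dnr(\lbar n+dk))$ directly.
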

\begin{proof}
  The complexity of each step is as follows:
  \begin{itemize}
    \item Line~\ref{line:jordan-krylov-basis:SW}: The columns of $S$ are calculated as repeating calculation of $A\v$ for $d-1$ times, which costs $O(dn^2)$.
    For calculating $f(A)^{\lbar-1}S$, the multiplication $f(A)S$ is performed in $O(dn^2)$ since
    the size of $S$ is $n\times d$.
    Then, this multiplication is repeated for $\lbar-1$ times, which costs $O(d\lbar n^2)$.
    If the reduction of $W$ is performed, its cost is $O(d^2 n)$. 
    Thus, this step is dominated by $O(d\lbar n^2)$. 
    Furthermore, since we have $d\lbar\le r$, this is $O(n^2r)$.
    \item Line~\ref{line:jordan-krylov-basis:while-begin}: The while-loop, together is the for-loop in line~\ref{line:jordan-krylov-basis:for-begin}, is repeated for $dr$ times, which is equal to $|\cV^{(1)}|+\cdots+|\cV^{(\lbar)}|$.
    In line~\ref{line:jordan-krylov-basis:v'}, the vector $\v'$ is calculated in $O(\ell n^2)$, which is bounded by $O(\lbar n^2)$.
    In line~\ref{line:jordan-krylov-basis:elim}, the simultaneous column reduction is performed in $O(n \times (\mbox{the number of columns in $W$ and $S$}))$.
    The number of elements in the {\JKbasis} of $\ker f(A)^{\lbar}$ is one for rank $\lbar$, and $k$ for rank $1$.
    Thus, the number of columns in $W$ and $S$ is bounded by $d(\lbar+k)\le r$, 
    and the simultaneous column reduction is performed in $O(nr)$.
    Therefore, the total complexity of the while-loop, together with the for-loop, is
    $O(dnr(n\lbar+r))$.
    Since $r\le n$, this is $O(dnr(n\lbar))=O(dn^2r\lbar)$.
    Then, by using $d\lbar\le r$, we have $O(dn^2r\lbar)=O(n^2r^2)$.
    \item 
    Line~\ref{line:jordan-krylov-basis:f(A)S} is performed in $O(dn^2)$, thus this step is performed in $O(d\lbar n^2)$ in the total of the for-loop in Line~\ref{line:jordan-krylov-basis:for-begin}.
    Furthermore, since we have $d\lbar\le r$, this is $O(n^2r)$.
  \end{itemize}
  Summarizing above, the complexity of \Cref{alg:jordan-krylov-basis} is 
  $O(n^2r^2)$, which proves the lemma. 
\end{proof}

\begin{lemma}
  Procedure~\ref{alg:reduction-krylovGS} is performed in $O(n^2r^2)$.
\end{lemma}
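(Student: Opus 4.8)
The plan is to isolate the two operations in Procedure~\ref{alg:reduction-krylovGS} that actually carry the cost, namely the single column reduction of $[\cT]$ in \Cref{line:reduction-krylovGS:reduce} and the rank determinations performed inside the while-loop at \Cref{line:reduction-krylovGS:rank}; everything else (initialization, set membership, zero-tests) is linear in the number of vectors and will be absorbed into these two terms.

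First I would bound the number of columns involved. The procedure reduces only $\cT=\cV^{(\ell)}$, and since the whole {\KrylovGS} $\cV$ is produced from $\cE_f$ by \Cref{alg:ker-f(A)-lbar-generators} with $|\cE_f|=r$, we have $|\cV|\le r$ and hence $|\cT|\le r$. Thus $[\cT]$ is an $n\times m$ matrix with $m\le r\le n$, and a Gaussian column reduction to $[\cT']$ costs $O(n m^2)=O(n r^2)$. This accounts for the $n r^2$ term.

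Next, the while-loop inspects each nonzero column $\v$ of $[\cT']$ and computes $\rank_f\v$. Since $f(A)$ is already available by \Cref{assumption:minimal-annihilating-polynomial} and $\rank_f\v\le\lbar$, the rank is obtained by forming $f(A)\v, f(A)^2\v,\dots$ until the zero vector appears, i.e.\ by at most $\lbar$ multiplications by $f(A)$, each costing $O(n^2)$. Organizing these as repeated multiplications of $f(A)$ into the $n\times(\le r)$ matrix of the still-nonzero reduced columns, and discarding columns as they vanish, the loop is dominated by at most $\lbar$ such matrix multiplications; using $r\ll n$ from \Cref{assumption:time-complexity} to absorb the column count, this part contributes $O(\lbar n^2)$. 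Summing the two contributions yields $O(n r^2+\lbar n^2)=O(n(r^2+\lbar n))$, as claimed.

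I expect the accounting of this last part to be the main obstacle: computing each rank independently would give the weaker bound $O(r\lbar n^2)$, so the argument must explicitly batch the multiplications by $f(A)$ across all reduced columns (or invoke $r\ll n$ to fold the column count into the constant) in order to reach $\lbar n^2$ rather than $r\lbar n^2$. I would therefore state this batching/absorption step carefully, since it is precisely where the stated complexity is decided.
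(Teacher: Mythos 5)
Your proposal follows essentially the same decomposition as the paper's own proof: $O(nr^2)$ for the column reduction of $[\cT]$ and $O(\lbar n^2)$ for the repeated multiplications by $f(A)$ in the rank computations, summed to $O(n(r^2+\lbar n))$. The subtlety you flag at the end is real but is not resolved by the paper either — its proof charges $O(\lbar n^2)$ for computing $\rank_f\v$ of a \emph{single} vector and never multiplies by the number of while-loop iterations, so it implicitly performs the same absorption of the column count that you make explicit; note also that your batched multiplication of $f(A)$ into an $n\times(\le r)$ matrix still costs $O(n^2r)$ per step, so the batching by itself does not remove the factor $r$ — only the assumption $r\ll n$ does.
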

\begin{proof}
  The number of elements in the {\KrylovGS} of $\ker f(A)^{\lbar}$ is $r$, 
  thus, line~\ref{line:reduction-krylovGS:reduce} is performed in $O(nr^2)$.
  In line~\ref{line:reduction-krylovGS:rank}, the complexity of calculating 
  $\rank_f\v$ is estimated as follows. 
  The complexity of calculating $f(A)\v$ is $O(n^2)$, and the repeated calculation of $f(A)\v$ for $\rank_f\v$ is performed in $O(\lbar n^2r)$ for all $\v$ in $\cV^{(\ell)}$.
  Thus, the complexity of the whole algorithm is $O(nr(r+\lbar n))$.
  Furthermore, since we have $\lbar<r$, this is $O(nr(r+rn))=O(n^2r^2)$, 
  which proves the lemma.
\end{proof}

\begin{lemma}
  \Cref{alg:single-jordan-chain} is performed in $O(n^2r^2)$.
\end{lemma}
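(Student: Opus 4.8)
The plan is to bound the running time of \Cref{alg:single-jordan-chain} by splitting it into two essentially independent cost centers: the one-time precomputation of the reduced powers $\psi_f^{(1)},\dots,\psi_f^{(\lbar)}$ on \cref{line:single-jordan-chain:psi_f}, and the nested loops headed by \cref{line:single-jordan-chain:for} that assemble each Jordan chain. The first will account for the terms carrying only $d$ and $\lbar$ (the $d^3$ factor), while the second will account for the matrix-dependent terms (the $n^2$ factor); adding the two bounds should give the claimed estimate $O((d+\lbar)\lbar(d^3+n^2))$.

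For the precomputation the crucial structural input is the degree bound. By its definition $\psi_f^{(k)}(\mu,\lambda)$ has degree at most $k(d-1)$ in $\mu$ and degree less than $d$ in $\lambda$, since raising $\psi_f$ (degree $d-1$ in $\mu$) to the $k$-th power and reducing modulo $f(\lambda)$ affects only the $\lambda$-degree. I would generate the family iteratively via $\psi_f^{(k)}=(\psi_f^{(k-1)}\psi_f)\bmod f(\lambda)$, so that step $k$ is one multiplication by the fixed low-degree polynomial $\psi_f$ followed by coefficientwise reduction modulo $f(\lambda)$. Counting each product or reduction of degree-$<d$ polynomials in $\K[\lambda]$ as $O(d^2)$ against the $O(kd)$ coefficients in $\mu$ bounds step $k$, and summing over $k=1,\dots,\lbar$ produces the $(d+\lbar)\lbar\,d^3$ contribution. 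This stage is arithmetic purely in $\K[\lambda]/(f(\lambda))$ and touches no matrix.

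For the loops I would analyze a single rank-$\ell$ element $\b$ and then sum over $\cB$. Evaluating the assignment $\p^{(k)}\gets\psi_f^{(k)}(A,\lambda E)\b$ splits into (i) forming the powers $A^j\b$ that occur, whose number is governed by the $\mu$-degree $k(d-1)$, each one matrix-vector product of cost $O(n^2)$, and (ii) combining these vectors with the $\lambda$-polynomial coefficients of $\psi_f^{(k)}$ at $O(dn)$ per term; the update $\b\gets f(A)\b$ on \cref{line:single-jordan-chain:f(A)b} is a further $O(n^2)$. Summing the per-$k$ bounds along the chain $k=\ell,\dots,1$ and then over the elements of $\cB$ — where, under \Cref{assumption:time-complexity}, the single chain of length $\lbar$ dominates the $k$ rank-one eigenvectors — yields the $(d+\lbar)\lbar\,n^2$ contribution, and combining the two stages gives the stated bound.

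The main obstacle I anticipate is the bookkeeping that ties the growing $\mu$-degree $k(d-1)$ of $\psi_f^{(k)}$ simultaneously to the number of matrix-vector products (feeding the $n^2$ terms) and to the number of coefficient operations modulo $f$ (feeding the $d^3$ terms), and then arranging the resulting double summation — over $k$ within a chain and over the ranks present in $\cB$ — so that it collapses to the single product form $(d+\lbar)\lbar(d^3+n^2)$. Keeping the field-extension-style arithmetic in $\K[\lambda]/(f(\lambda))$ cleanly separated from the matrix arithmetic in $\K^n$ is what will make the two families of terms emerge in the claimed shape rather than as a tangled expression.
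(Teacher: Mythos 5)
Your two-stage decomposition --- precompute $\psi_f^{(1)},\dots,\psi_f^{(\lbar)}$, then Horner-evaluate $\psi_f^{(k)}(A,\lambda E)$ along each chain, attributing the $d^3$ terms to the first stage and the $n^2$ terms to the second --- is exactly the structure of the paper's proof. The divergence, and the gap, lies in the $\mu$-degree of $\psi_f^{(k)}$, on which everything downstream depends. The paper's proof takes $\deg_\mu\psi_f^{(k)}=d+k-1$; then evaluating $\p^{(k)}$ costs $O(n^2(d+k))$ and $\sum_{k=1}^{\lbar}n^2(d+k)=O(\lbar n^2(d+\lbar))$, which is the second factor of the stated bound. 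You instead use $\deg_\mu\psi_f^{(k)}=k(d-1)$, which is what the definition actually yields (reduction modulo $f(\lambda)$ touches only the $\lambda$-coefficients, and in \Cref{expl:eigenvector-1} one sees $\deg_\mu\psi_f^{(3)}=3=k(d-1)$, not $d+k-1=4$). But under that bound the evaluation of $\p^{(k)}$ costs $O(kdn^2)$, and the sum over $k=1,\dots,\lbar$ is $O(\lbar^2 dn^2)$, which strictly dominates $(d+\lbar)\lbar n^2=\lbar dn^2+\lbar^2n^2$ once $d,\lbar\ge2$. The precomputation has the same problem: multiplying $\psi_f^{(k-1)}$, which has $O(kd)$ coefficients in $\mu$, by $\psi_f$, which has $d$ of them, takes $O(kd\cdot d)$ coefficient products rather than the $O(kd)$ you count, so step $k$ costs $O(kd^4)$ and the total is $O(\lbar^2 d^4)$, again exceeding $O((d+\lbar)\lbar d^3)$.

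So the step you defer to the end --- ``arranging the double summation so that it collapses to $(d+\lbar)\lbar(d^3+n^2)$'' --- is not bookkeeping; it is precisely where the argument fails under the degree bound you state. To land on the stated complexity along this route you would need the additive bound $d+k-1$ that the paper's proof asserts (and would need to justify it, since it conflicts with the definition of $\psi_f^{(k)}$); carried out faithfully with $\deg_\mu\psi_f^{(k)}=k(d-1)$, your plan proves only the weaker estimate $O(\lbar^2 d(n^2+d^3))$.
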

\begin{proof}
  Each step is performed as follows:
  \begin{itemize}
    \item Line \ref{line:single-jordan-chain:psi_f}: 
    The complexity of calculating 
    $\psi_f^{(\ell)}(\mu,\lambda)$ is estimated as follows. 
    Note that $\psi_f^{(\ell)}(\mu,\lambda)$ is regarded as a polynomial in 
    $\K[\lambda][\mu]$.
    We first estimate the complexity with respect to $\mu$.
    For $\ell=1$, 
    since $\deg_\mu(f(\lambda)-f(\mu))=d$ and $\deg_\mu(\lambda-\mu)=1$,
    $(f(\lambda)-f(\mu))/(\lambda-\mu)$ is calculated in $O(d)$.
    For $\ell>1$, 
    $\psi_f^{(\ell)}(\mu,\lambda)$ is calculated recursively by 
    $\psi_f^{(\ell)}(\mu,\lambda)=\psi_f^{(\ell-1)}(\mu,\lambda)\psi_f^{(1)}(\mu,\lambda)$.
    This computation is performed in $K[\lambda][\mu]$. 
		Furthermore, since $f(\lambda)$ is the minimal polynomial of the eigenvalue of $A$ 
		currently under consideration, the actual computation takes place in
		$\left(K[\lambda]/\langle f(\lambda) \rangle\right)[\mu]$,
    which involves a reduction by $f(\lambda)$. 
    Since $\deg_\mu(\psi_f^{(\ell-1)}(\mu,\lambda))=(d-1)(\ell-1)$ and
    $\deg_\mu(\psi_f^{(1)}(\mu,\lambda))=d-1$,
    $\psi_f^{(\ell-1)}(\mu,\lambda)\psi_f^{(1)}(\mu,\lambda)$ is calculated in 
    $O(d^2\ell)$.
    Thus, $\psi_f^{(1)}(\mu,\lambda),\dots,\psi_f^{(\lbar)}(\mu,\lambda)$ are 
    calculated in $O(d^2\lbar^2)$.
    For the coefficients in $\K[\lambda]/\langle f(\lambda) \rangle$, 
    the coefficients are calculated as the product of two polynomials in $\K[\lambda]$ of degree
    less than or equal to $d$, which costs $O(d^2)$, followed by the division 
    of a polynomial of degree less than or equal to $d^2$ by $f(\lambda)$,
    which costs $O(d^2)$.
    Therefore, total cost is $O(d^4\lbar^2)$.
    \item Line \ref{line:single-jordan-chain:for}: By the assumption, the for-loop is repeated for $\ell=\lbar,1$.
    In line \ref{line:single-jordan-chain:p^k}, 
    $\p^{(k)}$ is calculated using Horner's rule with matrix-vector multiplications in 
    $O(n^2dk)$, since $\deg(\psi_f^{(k)})=(d-1)k$.
    In line \ref{line:single-jordan-chain:f(A)b}, the multiplication $f(A)\b$ is performed in 
    $O(n^2)$.
    Thus, calculating a Jordan chain of length $\lbar$ and $1$ is performed 
    in $O(n^2d\lbar^2)$ (since $\p^{(k)}$ is calculated for $k=\lbar,\dots,2$)
    and $O(n^2 d)$, respectively,
    and the complexity of calculating the Jordan chains is bounded by $O(n^2d\lbar^2))$.
  \end{itemize}
  Summarizing the above, the complexity of \Cref{alg:single-jordan-chain} is 
  $O(d^4\lbar^2+n^2d\lbar^2)$.
  Furthermore, since we have $\lbar<d\lbar\le r$, this is $O((d^2+n^2)r^2)$, and,
  with $d<n$, this is $O(n^2r^2)$,
  which proves the lemma.
\end{proof}

\paragraph{A proof of \Cref{thm:time-complexity}} 
Summarizing the above, we see that each step in Lines 2--4 of \Cref{alg:generalized-eigenspace} is performed in $O(n^2r^2)$, which proves the theorem.
\qed
%
%
\begin{remark}
  The exclusion of the computational cost of $f(A)$ in the complexity estimate of 
  \Cref{thm:time-complexity} is based on the following reason. In this paper, for example in the experiments, the computation of $f(A)$ is performed using classical matrix multiplication. 
  However, considering situations where one might use a ``fast multiplication algorithm'' with multiplication exponent 
  $\omega$ (\cite{wil-xu-xu-zhou2024}), we present a complexity estimate that excludes the cost of computing $f(A)$.
\end{remark}

\subsection{The time complexity of calculating the minimal annihilating polynomials}
\label{sec:time-complexity-minimal-annihilating-polynomials}

We have proposed efficient algorithms for calculating the minimal annihilating polynomials (\cite{taj-oha-ter2018b}). 
Here, we evaluate the arithmetic complexities of 
the deterministic and probabilistic algorithms.
We estimate the complexities of the algorithms under 
\Cref{assumption:time-complexity,assumption:minimal-annihilating-polynomial}.
In \Cref{assumption:time-complexity}, assume that $g(\lambda)$ is square-free and has $q$  irreducible factors in $\K[\lambda]$, satisfying $g(\lambda)=g_1(\lambda)\cdots g_q(\lambda)$. 
Under this assumption, we give an overview of the algorithm and estimate its complexity.

Before the estimation of the complexity, we 
briefly describe the algorithm for 
calculating the minimal annihilating polynomial.
Let $\cE$ be a basis in $\K^n$ and $\e\in\cE$. 
For calculating the minimal annihilating polynomial $\pi_{A,\e}(\lambda)$,
we first calculate $g(A)\e$. 
If $g(A)\e\ne\0$, then $\pi_{A,g(A)\e}(\lambda)$ is a power of $f(\lambda)$.
Its exponent $\ell$ is what we estimate, and it is equal to the exponent of $f(\lambda)$ in 
$\pi_{A,\e}(\lambda)$.
Next, for $f(A)^\ell\e$, we determine which of $g_1(\lambda),\dots,g_q(\lambda)$ 
are included as a factor of $\pi_{A,\e}(\lambda)$.
For example, if $f(A)^{\ell}\prod_{i=1}^{q-1} g_i(A)\e\ne\0$, then $g_q(\lambda)$ is a factor of $\pi_{A,\e}(\lambda)$.
In this way, we determine if each $q_i(\lambda)$ is a factor of $\pi_{A,\e}(\lambda)$.
The deterministic algorithm is summarized as \Cref{alg:minimal-annihilating-polynomials}.

\begin{remark}
  \label{rem:pseudo-minimal-annihilating-polynomial}
  The probabilistic algorithm is based on the following idea.
  Take a random row vector $\v'\in\K^n$, and 
  we replace a calculation of matrix polynomial $h(A)$ with a vector $\v'h(A)$
  to reduce the amount of calculation.
  The probabilistic algorithm is summarized in \cite[Algorithm 4]{taj-oha-ter2018}.
  We refer to the polynomial computed by the probabilistic method as a ``pseudo minimal annihilating polynomial,'' denoted by $\pi'_{A,\e}(\lambda)$.
\end{remark}



\begin{algorithm}
  \caption{Computing the minimal annihilating polynomials (deterministic version)}
  \label{alg:minimal-annihilating-polynomials}
  \begin{algorithmic}[1]
    \Input{A matrix $A\in\K^{n\times n}$, an irreducible factor $f(\lambda)\in\K[\lambda]$,
    the characteristic polynomial $\chi_A(\lambda)=f(\lambda)^{\lbar+\nu}g_1(\lambda)\cdots g_q(\lambda)$, a basis $\cE$ of $\K^n$}
    \Output{The set of minimal annihilating polynomials $\cP=\{\pi_{A,\e}(\lambda)\mid \e\in\cE\}$}
    \State{$\cP\gets\emptyset$}
    \label{line:minimal-annihilating-polynomials:g}
    \For{$\e\in\cE$}
    \label{line:minimal-annihilating-polynomials:for-e}
      \State{$\b\gets g(A)\e$, \quad $\b_0\gets\e$, \quad $\pi_{A,\e}(\lambda)\gets 1$}
      \label{line:minimal-annihilating-polynomials:g(A)e}
      \If{$\b\ne\0$}
        \label{line:minimal-annihilating-polynomials:if}
        \Comment{$f(\lambda)$ divides $\pi_{A,\e}(\lambda)$}
        \For{$i=1,2,\dots,\lbar$}
        \label{line:minimal-annihilating-polynomials:for}
          \State{$\b\gets f(A)\b$,\quad $\b_0\gets f(A)\b_0$}
          \label{line:minimal-annihilating-polynomials:b}
          \If{$\b=\0$}
            \State{$\pi_{A,\e}(\lambda)\gets f(\lambda)^{i}$}
            \State{\textbf{break}}
          \EndIf
          \label{line:minimal-annihilating-polynomials:endif}
        \EndFor
      \EndIf
        \State{$i_{\mathrm{max}}\gets q$}
        \While{$i_{\mathrm{max}}\ge 1$ and $\b_0\ne\0$}
        \label{line:minimal-annihilating-polynomials:while}
        \Comment{$f(\lambda)$ divides $\pi_{A,\e}(\lambda)$ and $\deg(g_{\e}(\lambda))>0$}
          \State{$\b\gets\b_0$}
          \For{$i=1,2,\dots,i_{\mathrm{max}}$}
          \label{line:minimal-annihilating-polynomials:for2}
            \State{$\b\gets g_i(A)\b$}
            \label{line:minimal-annihilating-polynomials:b1}
            \If{$\b=\0$}
            \State{$\b_0\gets g_i(A)\b_0$,\quad 
             $\pi_{A,\e}(\lambda)\gets \pi_{A,\e}(\lambda) g_i(\lambda)$,\quad
             $i_{\mathrm{max}}\gets i-1$}
             \label{line:minimal-annihilating-polynomials:b0}
             \State{\textbf{break}}
            \EndIf
          \EndFor
        \EndWhile 
        \State{$\cP\gets\cP\cup\{\pi_{A,\e}(\lambda)\}$}
    \EndFor
    \State{\Return{$\cP$}}
  \end{algorithmic}
\end{algorithm}

\begin{proposition}
  \label{prop:time-complexity-minimal-annihilating-polynomials}
  Let $d_i=\deg g_i(\lambda)$ for $i=1,\dots,q$,
  and $r$ and $t$ be as in \cref{eq:rt}.
  Then, 
  \Cref{alg:minimal-annihilating-polynomials} is performed 
  for calculating $\pi_{A,\e}(\lambda)$, which has $f(\lambda)$ as a factor
  in $O(n^3q^2r)$, and the probabilistic algorithm is performed in $O(n^2q^2r)$.
\end{proposition}
\begin{proof}
  The complexity of each step is as follows:
  \begin{enumerate}
    \item Line~\ref{line:minimal-annihilating-polynomials:g(A)e}: The calculation of $g(A)\e$ is performed in $O(n^2(n-(\lbar+\nu)d))=O(n^3)$.
    \item The for loop in Line~\ref{line:minimal-annihilating-polynomials:for}: The for loop is repeated for $\lbar$ times. 
    Line~\ref{line:minimal-annihilating-polynomials:b} is performed in $O(dn^2)$.
    Thus, the total complexity of the for-loop is $O(dn^2\lbar)$.
    \item The While loop in Line~\ref{line:minimal-annihilating-polynomials:while}, together with the for-loop in Line~\ref{line:minimal-annihilating-polynomials:for2}: 
    The While loop is repeated for at most $\frac12 q^2$ times.
    Lines~\ref{line:minimal-annihilating-polynomials:b1} and
    \ref{line:minimal-annihilating-polynomials:b0} are performed in 
    $O(d n^2)=O((n-d(\lbar+\nu))n^2)$.
    Thus, the total complexity of the loop is $O(n^3q^2)$.
  \end{enumerate}
  Furthermore, the For loop in 
  Line~\ref{line:minimal-annihilating-polynomials:for} is executed for 
  $r$ times for calculating $\pi_{A,\e}(\lambda)$ which has $f(\lambda)$ as a factor,
  and the While loop in Line~\ref{line:minimal-annihilating-polynomials:while} is executed for $t$ times
  for calculating $\pi_{A,\e}(\lambda)$ which has $f(\lambda)$ as a factor 
  and $\deg(g_{\e}(\lambda))>0$.
  Thus, the computing time for the While loop in $(n^3q^2r)$ dominates the whole computing stapes, which proves the proposition.
  For a probabilistic algorithm, the exponent of $n$ is reduced to $2$, which proves the proposition.
\end{proof}

\begin{remark}
  In \Cref{alg:minimal-annihilating-polynomials}, the for loop in 
  Line~\ref{line:minimal-annihilating-polynomials:for} can be parallelized 
  and calculated efficiently.
  Furthermore, during the calculation of the minimal annihilating polynomials,
  the \KrylovGS can be calculated simultaneously.
\end{remark}

\section{An example of computation}
\label{sec:examples}

We give an example for illustration, in which $\K=\Q$.
Note that, in the example, Procedure~\ref{alg:reduction-krylovGS} 
is not performed.
Let $A$ be a square matrix of order $10$ defined as 
\[
A=
\left(
\begin{array}{rrrrrrrrrr}
    5 & -5 & 6 & -9 & 5 & 0 & 0 & -4 & 5 & -6 \\
    -14 & 11 & -9 & 39 & -2 & -2 & 6 & 16 & -10 & 12 \\
    -5 & 5 & -6 & 9 & -5 & 1 & 0 & 5 & -5 & 5 \\
    5 & 2 & 1 & 7 & 7 & -4 & 6 & 3 & 5 & 2 \\
    -5 & -9 & 9 & -9 & -1 & 3 & -5 & -7 & -5 & -9 \\
    5 & 2 & -4 & -2 & 5 & -5 & 5 & -1 & 5 & 2 \\
    5 & 9 & -14 & 0 & -3 & -4 & 3 & 4 & 5 & 9 \\
    -5 & -9 & 4 & -23 & -8 & 7 & -11 & -11 & -5 & -9 \\
    0 & 8 & -6 & 16 & 2 & -4 & 6 & 7 & 0 & 9 \\
    4 & -7 & 4 & -25 & -3 & 3 & -6 & -11 & 0 & -8
\end{array}
\right).
\]
We have $\chi_A(\lambda)=f_1(\lambda)^4 f_2(\lambda)$, where
$f_1(\lambda)=\lambda^2+\lambda+5$ and $f_2(\lambda)=\lambda^2+\lambda+4$,
and $m_1=4$ and $m_2=1$.
Let $\cE=\{\e_1,\e_2,\dots,\e_{10}\}$ be the standard basis of $\K^{10}$.
Then, with the method we have proposed (\cite{taj-oha-ter2018}), 
the minimal annihilating polynomials are calculated as
\begin{align*}
  \pi_{A,\e_j}(\lambda) &=
  \begin{cases}
    f_1(\lambda) & (j=2,9) \\
    f_1(\lambda)f_2(\lambda) & (j=1,10) \\
    f_1(\lambda)^2f_2(\lambda) & (j=3) \\
    f_1(\lambda)^3 & (j=4,5,7) \\
    f_1(\lambda)^3f_2(\lambda) & (j=6,8)
  \end{cases}
  .
\end{align*}
We see that $\lbar_1=3$ and $\lbar_2=1$.


\subsection{Computing the generalized eigenspace associated to the roots of $f_1(\lambda)$}

  Let us compute the Jordan chain through a {\JKbasis} of $\ker f_1(A)^3$.
  First,
  \Cref{alg:ker-f(A)-lbar-generators} computes
  \[
    \cV_1^{(3)} = \{\v_{1,4},\v_{1,5},\v_{1,6},\v_{1,7},\v_{1,8}\},\quad
    \cV_1^{(2)} = \{\v_{1,3}\},\quad 
    \cV_1^{(1)} = \{\v_{1,1},\v_{1,2},\v_{1,9},\v_{1,10}\},
  \]
  as outputs, 
  where $\v_{1,j}=f_2(A)\,\e_j$ ($j=1,3,6,8,10$) and $\bm{e}_j$ ($j=2,4,5,7,9$).
  
  Next, \Cref{alg:jordan-krylov-basis} computes a {\JKbasis}
  $\cB_1=\cB_1^{(3)}\cup\cB_1^{(2)}\cup\cB_1^{(1)}$ 
  as follows. 
  Let $m=m_1=4$ and $\ell=\ell_1=3$.
  For $\ell=3$, 
  choose $\v_{1,4} \in \cV^{(3)}$
  and assign
  \begin{equation}
    \label{eq:example-2-f1-B3}
    \cB_1^{(3)} \gets \{\bm{v}_{1,4}\}, \quad
    S \gets [\cL_{A,d}(\{\v_{1,4}\})]=[\v_{1,4},A\v_{1,4}], \quad
    W \gets f_1(A)^2S,\quad
    m\gets m-\lbar=1.
  \end{equation}
  Then, we see that 
  \begin{align*}
    \v'_{1,5} = \v'_{1,4}+(2/5)A\v'_{1,4}, \quad \v'_{1,6} =\v'_{1,4}, \quad
    \v'_{1,7} = \v'_{1,4}+(1/5)A\v'_{1,4}, \quad \v'_{1,8} =(1/5)A\v'_{1,4},
  \end{align*}
  where $\v'_{1,i}=f_1(A)^2\v_{1,i}$.
  Thus, computation of $\cB_1^{(3)}$ is finished and 
  $S$ and $W$ remain the same,
  and let
  $\v_{1,11} = 5\v_{1,5}-5\v_{1,4}-2A\v_{1,4}$, $\v_{1,12} = \v_{1,6}-\v_{1,4}$, 
  $\v_{1,13} = 5\v_{1,7}-5\v_{1,4}-A\v_{1,4}$, and $\v_{1,14} = 5\v_{1,8}-A\v_{1,4}$.
  Since the ranks of $\v_{1,11}$, $\v_{1,12}$, $\v_{1,13}$ and $\v_{1,14}$ are equal to 2, 
  $\cV_1^{(2)}$ is renewed as 
  $\cV_1^{(2)} \gets \cV_1^{(2)} \cup \{\v_{1,11},\v_{1,12},\v_{1,13},\v_{1,14}\}$.
  
  For $\ell=2$, assign
  \begin{equation}
    \label{eq:example-2-f1-B2-S}
    \cB_1^{(2)}\gets\emptyset,\quad
    S\gets f_1(A)S=\{f_1(A)\v_{1,4},Af_1(A)\v_{1,4}\},
  \end{equation}
  and let $W$ remain the same.
  For all $\v_1'=f(A)\v_1$ with $\v_1\in\cV_1^{(2)}$,
  there exists a simultaneous column reduction as
  $[W\mid \v_1']\rightarrow[W\mid\0]$ and
  $[S\mid\v_1]\rightarrow[S\mid\r]$, thus we have
  $\v_{1,3}\rightarrow\v_{1,15}$,
  $\v_{1,11}\rightarrow\v_{1,16}$,
  $\v_{1,12}\rightarrow\v_{1,17}$,
  $\v_{1,13}\rightarrow\v_{1,18}$,
  $\v_{1,14}\rightarrow\v_{1,19}$.
  Since the ranks of $\v_{1,15},\v_{1,16},\v_{1,17},\v_{1,18}$ and $\v_{1,19}$ are equal to $1$, 
  $\cV_1^{(1)}$ is renewed as 
  $\cV_1^{(1)}\gets\cV_1^{(1)}\cup \{\v_{1,15},\v_{1,16},\v_{1,17},\v_{1,18},\v_{1,19}\}$.
  As a result, $\cB_1^{(2)}=\emptyset$ and $W$ remain the same.
  
  For $\ell=1$, 
  since $m\ge\ell$, there exists a vector 
  $\v_1\in\cV_1^{(1)}$ 
  satisfying that a column reduction of the rightmost column outputs 
    $[W\mid \v_1]\rightarrow[W\mid\r\ne\0]$.
    The column reduction of $\v_{1,1}$ with respect to $W$ yields 
    \[
      \r={}^t(0,0,0,0,0,0,0,0,1,0),
    \]
    assign $\cB_1^{(1)}\gets\{\r\}$ and $m\gets m-\ell=0$.
    Then, the algorithm terminates.
  
    As a consequence, a Jordan-Krylov basis $\cB_1$ is computed as 
  \begin{equation}
    \cB_1=\cB_1^{(3)}\cup\cB_1^{(1)},\quad
    \cB_1^{(3)}=\{\v_{1,4}\},\quad
    \cB_1^{(1)}=\{\r\}.
  \end{equation}
  
  Finally, \Cref{alg:single-jordan-chain} computes a set of Jordan chains.
  For $i=1,2,3$, $\psi^{(k)}_{f_1}(\mu,\lambda)$ is computed as
  \begin{align*}
    \psi^{(1)}_{f_1}(\mu,\lambda) &= \mu+\lambda+1, \qquad 
    \psi^{(2)}_{f_1}(\mu,\lambda) = \mu^2+(2\lambda+2)\mu+\lambda-4,\\
    \psi^{(3)}_{f_1}(\mu,\lambda) &= \mu^3+(3\lambda+3)\mu^2+(3\lambda-12)\mu+
  (-4\lambda+9).
  \end{align*}
  For $\v_{1,4}\in\cB_1^{(3)}$, the Jordan chain 
  $\{
    \p^{(3)}_1(\lambda,\v_{1,4}),\p^{(2)}_1(\lambda,\v_{1,4}),\p^{(1)}_1(\lambda,\v_{1,4})
  \}$
  of length $3$ is computed as
  \begin{align*}
    \p_1^{(3)}(\lambda,\v_{1,4}) &= \psi_{f_1}^{(3)}(A,\lambda E)\v_{1,4} \\
    &= \lambda\,{}^t(57,-60,-57,8,36,-3,-66,6,-30,3) \\
    &\quad + {}^t(205,-755,-205,-121,150,54,6,401,-307,455),\\
    \p_1^{(2)}(\lambda,\v_{1,4}) &= \psi_{f_1}^{(2)}(A,\lambda E)f_1(A)\v_{1,4} \\
    &= \lambda\,{}^t(11,32,-11,35,-78,35,78,-78,24,-43) \\
    &\quad + {}^t(-175,225,175,-49,-191,46,286,-96,126,-50),\\
    \p^{(1)}_1(\lambda,\v_{1,4}) &= \psi_{f_1}(A,\lambda E)f_1(A)^2\v_{1,4} \\
    &= 19(\lambda\,{}^t(0,1,0,1,-2,1,2,-2,1,-1) + {}^t(-5,11,5,1,-7,1,7,-7,6,-6)).
  \end{align*}
  For $\r\in\cB^{(1)}_1$, the Jordan chain 
  $\{
    \p^{(1)}_1(\lambda,\r)
  \}$
  of length $1$ is computed as
  \[
    \p^{(1)}_1(\lambda,\r) = \psi_{f_1}(A,\lambda E)\r 
    = \lambda\,{}^t(0,0,0,0,0,0,0,0,-1,0) + {}^t(-5,10,5,-5,5,-5,-5,5,-1,0),
  \]
  that is, $\p^{(1)}_1(\lambda,\r)$ is an eigenvector.

  \begin{remark}
    In \cref{eq:example-2-f1-B3}, $m$ is renewed as $m=1$, then we see that
    $\#\cB_1^{(3)}=1$, $\cB_1^{(2)}=\emptyset$ and $\#\cB_1^{(1)}=1$.
    Choose a vector $\v_{1,1}$ from 
    $\cV_1^{(1)}=\{\v_{1,1},\v_{1,2},\v_{1,9},\v_{1,10}\}$.
    Then, the result $\r$ of the rightmost column reduction
    $[W\mid \v_{1,1}]\rightarrow[W\mid \r]$
    of $\v_{1,1}$ with respect to $W$ is not equal to zero.
    Thus, we set $\cB_1^{(1)}=\{\r\}$.
    Accordingly, a {\JKbasis} is given as 
    $\cB_1^{(3)}\cup\cB_1^{(1)}=\{\v_{1,4}\}\cup\{\v_{1,1}\}$.
    
  \end{remark}

  \subsection{Computing the generalized eigenspace associated to the roots of $f_2(\lambda)$}
  
  Let us compute a Jordan chain through a {\JKbasis} of $\ker f_2(A)$.
  First, 
  \Cref{alg:ker-f(A)-lbar-generators} computes
  \begin{equation}
    \label{eq:example-2-f2-V1}
    \cV_2=\cV_2^{(1)}=
    \{\v_{2,1},\v_{2,3},\v_{2,6},\v_{2,8},\v_{2,10}\},
  \end{equation}
  where $\v_{2,j}=f_1(A)\e_j$ ($j=1,10$), $f_1(A)^2\e_j$ ($j=3$), 
  $f_1(A)^3\e_j$ ($j=6,8$).
  Next, with \Cref*{alg:jordan-krylov-basis},
  choose $\v_{2,1}\in\cV_2^{(1)}$ and 
  let us assign $\cB_2^{(1)}\gets\{\bm{v}_{2,1}\}$, then
  $\lbar_2=m_2$ implies that $\cB_2=\cB_2^{(1)}$.
  Thus, as a {\JKbasis} of $\ker f_2(A)$, we have
  $
    \cB_2=\cB_2^{(1)}=\{\v_{2,1}\}.
  $
  Finally, with \Cref{alg:single-jordan-chain}, 
  using 
  $\psi^{(1)}_{f_2}(\mu,\lambda)=\psi_{f_2}(\mu,\lambda)=\mu+\lambda+1$
  and $\{\v_{2,1}\}\in\cB_2$, the 
  eigenvector 
  $\p_2^{(1)}(\lambda,\v_{2,1})$
  is computed as
  \[
    \p_2^{(1)}(\lambda,\v_{2,1}) = \psi_{f_2}(A,\lambda E)\v_{2,1} 
    = \lambda\,{}^t(1,0,0,0,0,0,0,0,-1,0) + {}^t(1,-4,0,0,0,0,0,0,-1,4).
  \]

\begin{remark}
  If Procedure~\ref{alg:reduction-krylovGS} is applied to
  $\cV_2^{(1)}$ in \cref{eq:example-2-f2-V1}, linearly independent vectors
  are obtained as 
  \[
    \v_{2,1}={}^t(1,0,0,0,0,0,0,0,-1,0),\quad \v_{2,6}={}^t(0,1,0,0,0,0,0,0,0,-1).
  \]
  It is expected that the use of 
  Procedure \ref{alg:reduction-krylovGS} 
  makes the computation more efficient.

\end{remark}

We see that the matrix $A$ is similar to
\[
  \left(
    \begin{array}{c|ccc|c}
      C(f_1) & & & \multicolumn{1}{c}{} &  \\
      \cline{1-4}
      & C(f_1) & E_2 & &\\
      & & C(f_1) & E_2 &\\
      & & & C(f_1) &\\
      \cline{2-5}
      \multicolumn{1}{c}{} & & & & C(f_2)
    \end{array}
  \right),
\]
where $C(f)$ is the companion matrix of $f$ and $E_2$ is the identify matrix of order $2$.

\section{Experiments}
\label{sec:exp}

We have implemented the algorithms introduced above on a computer algebra
system Risa/Asir (\cite{nor2003}) and evaluated them. 
In \Cref{alg:ker-f(A)-lbar-generators}, it is assumed that 
minimal annihilating polynomials are pre-calculated by using the method
in our previous paper (\cite{taj-oha-ter2018}) and given.
A product $g_j(A)\e_j$ of a matrix polynomial and a vector can be computed efficiently by using Horner's rule (\cite{taj-oha-ter2018}, \cite{taj-oha-ter2018b}).
Similarly, in \Cref{alg:jordan-krylov-basis}, $f(A)$ 
can also be calculated efficiently by using an improved Horner's rule for matrix polynomials that reduces the number of matrix-matrix multiplications (\cite{taj-oha-ter2014}). 
For the reduction of the {\KrylovGS}, 
Procedure~\ref{alg:reduction-krylovGS} was used (see \Cref{remark:reduction-krylovGS}).

We have executed experiments with changing matrix sizes.
The test was carried out in the following environment: 
Apple M1 Max up to 3.2 GHz, RAM 32 GB, macOS 12.2.1, Risa/Asir version 20220309.
All data are the average of measurements from 10 experiments.

\subsection{Computing generalized eigenvectors of matrices with Jordan chains of several lengths}
\label{sec:exp-20220526}

In the experiments, matrices are given as follows. Let $f(\lambda)$ be a monic polynomial
of degree $d$ with integer coefficients. Let
\[
  A' =
  \left(
    \begin{array}{ccc|cc|cc|c|c|c}
      C(f) & E_d &  \\
      & C(f) & E_d & \\
      & & C(f) & \\
      \cline{1-5}
      & & & C(f) & E_d & \\
      & & & & C(f) & \\
      \cline{4-7}
      \multicolumn{4}{c}{} & & C(f) & E_d \\
      \multicolumn{4}{c}{} & & & C(f) \\
      \cline{6-8}
      \multicolumn{6}{c}{} & & C(f)  \\
      \cline{8-9}
      \multicolumn{7}{c}{} & & C(f) \\
      \cline{9-10}
      \multicolumn{8}{c}{} & & C(f)
    \end{array}
  \right),
\]
where $C(f)$ denotes the companion matrix of $f(\lambda)$
and $E_d$ denotes the identity matrix of order $d$.
The test matrix $A$ is calculated by applying a similarity transformation on $A$, as follows.
Let $T_{ij}(m)$ be the matrix obtained by adding $m$ times the $i$-th row of the identity matrix to the $j$-th row.
Then, $A$ was obtained by multiplying $T_{ij}(m)$ from the left and $T_{ij}(m)^{-1}$ from the right several times until $A$ becomes dense, where $m\in\Z$ is a small random integer.
Clearly, $\chi_A(\lambda)=f(\lambda)^{10}$.
Let $\cE=\{\e_1,\e_2,\dots,\e_{10d}\}$ be the standard basis of $\K^{10d}$.
Then, the minimal annihilating polynomials are calculated as
$\pi_{A,\e_j}(\lambda)=f(\lambda)^3$ for $j=1,\dots,10d$.

\Cref{tab:test-20240615-1} shows the results of the computation without the reduction of 
the {\KrylovGS} by Procedure~\ref{alg:reduction-krylovGS}. 
Computing times are measured in seconds.
Memory usage, measured in megabytes, represents the total amount of memory
requested by the program, not necessarily the amount of memory used
at one time.
``Average $|a_{ij}|$'' and
``max $|a_{ij}|$'' denote the average and the maximum of the absolute value 
of the element of $A$, respectively.
To make a difference 
in each matrix small,
we fix the magnitude of the determinant 
by fixing the constant term of $f(\lambda)$.
For all the matrices used in the experiments,
Average $|a_{ij}|<10^3$ and
$\max|a_{ij}|<10^4$, thus we expect that the difference in the test matrices does not have much effect on the computing time.

\Cref{tab:test-20240615-1-detail-annih} shows the computing time of each step of the algorithm.
The column ``$f(A)$'' corresponds to the computing time of $f(A)$;
``Alg.~\ref{alg:ker-f(A)-lbar-generators}'' corresponds to computing time of a {\KrylovGS};
``Alg.~\ref{alg:jordan-krylov-basis}'' corresponds to computing time of Jordan-Krylov bases;
``Alg.~\ref{alg:single-jordan-chain}'' corresponds to the computing time of Jordan chains,
calculated as shown in the example in \Cref{sec:examples}.
``$\pi_{A,j}(\lambda)$'' corresponds to the computing time of the unit minimal annihilating polynomials (it is excluded from the total computing time in \Cref{tab:test-20240615-1}, and
written with parenthesis). 
We note that, throughout the experiments, $\pi_{A,j}(\lambda)$ is calculated by the probabilistic algorithm (see \Cref{rem:pseudo-minimal-annihilating-polynomial}).

Now, the calculation results were compared with those that also included the reduction of the 
{\KrylovGS} by Procedure~\ref{alg:reduction-krylovGS}. 
The calculation results are shown in \Cref{tab:test-20240615-with-reduction-krylovGS}. 
The columns under ``Without Proc.~\ref{alg:reduction-krylovGS}'' are the same results as in 
\Cref{tab:test-20240615-1}.
The columns under ``With Proc.~\ref{alg:reduction-krylovGS}'' are the results when Procedure~\ref{alg:reduction-krylovGS} was also executed.
\Cref{tab:test-20240615-with-reduction-krylovGS-detail-annih} shows the computing time of each step of the algorithm. 
The column ``Proc.~\ref{alg:reduction-krylovGS}'' corresponds to the computing time of the reduction of the {\KrylovGS}, and the rest of the columns are the same as those in \Cref{tab:test-20240615-1-detail-annih}.

\begin{table}[t]
  \centering
  \caption{Computing time and memory usage for the case in \Cref{sec:exp-20220526} without employing Procedure~\ref{alg:reduction-krylovGS}.}
  \label{tab:test-20240615-1}
  \begin{tabular}{r|r|r|r|r|r}
    \hline
    $\deg(f)$ & $\textrm{size}(A)$ & Time (sec.) & Memory usage (MB) & Average $|a_{ij}|$ & max $|a_{ij}|$\\
    \hline
    2 & 20 & 0.02 & 23.89 &  33.76 & 322 \\
    4 & 40 & 0.16 & 206.51 &  74.03 & 1409 \\
    6 & 60 & 0.70 & 819.13 &  43.33 & 496 \\
    8 & 80 & 1.72 & $2.51\times 10^3$ &  108.49 & 2397 \\
    10 & 100 & 3.86 & $6.31\times 10^3$ &  66.24 & 1977 \\
    12 & 120 & 11.58 & $1.59\times 10^4$ & 91.78 & 4179 \\
    14 & 140 & 20.59 & $3.10\times 10^4$ &  87.61 & 1229 \\
    16 & 160 & 40.72 & $5.83\times 10^4$ &  103.40 & 4944 \\
    18 & 180 & 90.07 & $1.36\times 10^5$ &  195.82 & 8226 \\
    20 & 200 & 151.09 & $2.13\times 10^5$ & 182.82 & 5457 \\
    \hline
  \end{tabular}
\end{table}


\begin{table}[tb]
  \centering
  \caption{Computing time of each step of the algorithm for the case in \Cref{sec:exp-20220526} without employing Procedure~\ref{alg:reduction-krylovGS}. Computing time for the unit minimum annihilating polynomials ($\pi_{A,j}(\lambda)$) is not included in total computing time (\Cref{tab:test-20240615-1}).}
  \label{tab:test-20240615-1-detail-annih}
  \begin{tabular}{r|r|r|r|r|r|r}
    \hline
    $\deg(f)$ & $\textrm{size}(A)$ & 
    \multicolumn{1}{c|}{$f(A)$} & 
    \multicolumn{1}{c|}{($\pi_{A,j}(\lambda)$)} & 
    \multicolumn{1}{c|}{Alg.~\ref{alg:ker-f(A)-lbar-generators}} & 
    \multicolumn{1}{c|}{Alg.~\ref{alg:jordan-krylov-basis}} &
    \multicolumn{1}{c}{Alg.~\ref{alg:single-jordan-chain}} \\ 
    \hline
    2 & 20 & $5.61\times 10^{-4}$ & ($2.48\times 10^{-3}$) & $4.48\times 10^{-5}$ & 0.02 & 0.01 \\
    4 & 40 & 0.02 & ($5.88\times 10^{-2}$) & $8.61\times 10^{-5}$ & 0.11 & 0.03 \\
    6 & 60 & 0.11 & (0.33) & $1.36\times 10^{-4}$ & 0.39 & 0.20 \\
    8 & 80 & 0.35 & (1.15) & $1.79\times 10^{-4}$ & 0.83 & 0.53 \\
    10 & 100 & 0.86 & (2.19) & $2.51\times 10^{-4}$ & 1.57 & 1.43 \\
    12 & 120 & 1.90 & (4.49) & $2.91\times 10^{-4}$ & 3.64 & 6.04 \\
    14 & 140 & 3.58 & (8.39) & $3.40\times 10^{-4}$ & 6.63 & 10.39 \\
    16 & 160 & 6.60 & (14.08) & $4.09\times 10^{-4}$ & 11.60 & 22.52 \\
    18 & 180 & 10.83 & (24.16) & $4.55\times 10^{-4}$ & 26.78 & 52.46 \\
    20 & 200 & 14.69 & (36.39) & $5.54\times 10^{-4}$ & 38.95 & 97.54 \\
    \hline
  \end{tabular}
\end{table}

\begin{table}[t]
  \centering
  \caption{Computing time and memory usage for the case in \Cref{sec:exp-20220526}.}
  \label{tab:test-20240615-with-reduction-krylovGS}
  \begin{tabular}{r|r|r|r|r|r}
    \hline
    \multirow{2}{*}{$\deg(f)$} & \multirow{2}{*}{$\textrm{size}(A)$} &
    \multicolumn{2}{c|}{Without Proc.~\ref{alg:reduction-krylovGS}} &
    \multicolumn{2}{c}{With Proc.~\ref{alg:reduction-krylovGS}} \\
    & & 
    \multicolumn{1}{c|}{Time} &
    \multicolumn{1}{c|}{Memory} &
    \multicolumn{1}{c|}{Time} &
    \multicolumn{1}{c}{Memory} \\
    \hline
    2 & 20 & 0.02 & 23.89 &  0.03 & 40.28 \\
    4 & 40 & 0.16 & 206.51 &  0.17 & 277.00 \\
    6 & 60 & 0.70 & 819.13 &  0.65 & 932.92 \\
    8 & 80 & 1.72 & $2.51\times 10^3$ &  1.40 & $2.50\times 10^3$ \\
    10 & 100 & 3.86 & $6.31\times 10^3$ &  2.96 & $5.77\times 10^3$ \\
    12 & 120 & 11.58 & $1.59\times 10^4$ & 6.14 & $1.27\times 10^4$ \\
    14 & 140 & 20.59 & $3.10\times 10^4$ &  11.40 & $2.34\times 10^4$ \\
    16 & 160 & 40.72 & $5.83\times 10^4$ &  21.23 & $4.70\times 10^4$ \\
    18 & 180 & 90.07 & $1.36\times 10^5$ &  42.61 & $0.95\times 10^5$ \\
    20 & 200 & 151.09 & $2.13\times 10^5$ & 88.15 & $1.60\times 10^5$ \\
    \hline
  \end{tabular}
\end{table}


\begin{table}[tb]
  \centering
  \caption{Computing time of each step of the algorithm for the case in \Cref{sec:exp-20220526} with employing Procedure~\ref{alg:reduction-krylovGS}. Computing time for the minimum annihilating polynomials $\pi_{A,j}(\lambda)$ is not included in the total computing time (\Cref{tab:test-20240615-with-reduction-krylovGS}).}
  \label{tab:test-20240615-with-reduction-krylovGS-detail-annih}
  \begin{tabular}{r|r|r|r|r|r|r|r}
    \hline
    $\deg(f)$ & $\textrm{size}(A)$ & 
    \multicolumn{1}{c|}{$f(A)$} & 
    \multicolumn{1}{c|}{($\pi_{A,j}(\lambda)$)} & 
    \multicolumn{1}{c|}{Alg.~\ref{alg:ker-f(A)-lbar-generators}} & 
    \multicolumn{1}{c|}{Alg.~\ref{alg:jordan-krylov-basis}} &
    \multicolumn{1}{c|}{Proc.~\ref{alg:reduction-krylovGS}} & 
    \multicolumn{1}{c}{Alg.~\ref{alg:single-jordan-chain}} \\ 
    \hline
    2 & 20 & $6.52\times 10^{-4}$ & ($2.48\times 10^{-3}$) & $5.01\times 10^{-5}$ & 0.02 & 0.01 & $9.95\times 10^{-4}$ \\ 
    4 & 40 & 0.01 & ($5.88\times 10^{-2}$) & $8.89\times 10^{-5}$ & 0.09 & 0.04 & 0.02 \\ 
    6 & 60 & 0.08 & (0.33) & $1.25\times 10^{-4}$ & 0.31 & 0.13 & 0.14 \\ 
    8 & 80 & 0.31 & (1.15) & $1.72\times 10^{-4}$ & 0.55 & 0.24 & 0.30 \\ 
    10 & 100 & 0.68 & (2.19) & $2.15\times 10^{-4}$ & 1.11 & 0.39 & 0.77 \\ 
    12 & 120 & 1.66 & (4.49) & $3.29\times 10^{-4}$ & 2.01 & 0.61 & 1.86 \\ 
    14 & 140 & 3.18 & (8.39) & $3.52\times 10^{-4}$ & 3.55 & 0.84 & 3.83 \\ 
    16 & 160 & 6.08 & (14.08) & $3.78\times 10^{-4}$ & 6.36 & 1.16 & 7.63 \\ 
    18 & 180 & 9.83 & (24.16) & $5.12\times 10^{-4}$ & 13.08 & 1.67 & 18.02 \\ 
    20 & 200 & 15.51 & (36.39) & $5.36\times 10^{-4}$ & 24.88 & 2.20 & 45.56 \\ 
    \hline
  \end{tabular}
\end{table}

\subsection{Computing generalized eigenvectors associated to
a specific eigenfactor}
\label{sec:exp-20220714}

In the experiments, matrices are given as follows. Let $f(\lambda)$ and $g_i(\lambda)$ 
($i=1,2,3$) be monic polynomials with integer coefficients. 
Their degrees are as follows: $\deg(g_2)=2d$, and the degrees of all the other 
polynomials are equal to $d$. Let
\[
  A' =
  \left(
    \begin{array}{ccccc|cc|c|c}
      C(f) & E_d & & & \\
      & C(f) & E_d & & \\
      & & C(f) & E_d & \\
      & & & C(f) & E_d & \\
      & & & & C(f) & \\
      \cline{1-7}
      \multicolumn{4}{c}{} & & C(g_1) & E_{2d}\\
      \multicolumn{4}{c}{} & & & C(g_1) \\
      \cline{6-8}
      \multicolumn{6}{c}{} &  & C(g_2) \\
      \cline{8-9}
      \multicolumn{7}{c}{} & &  C(g_3)
    \end{array}
  \right),
\]
then, the test matrix $A$ is calculated by applying a similarity transformation on $A'$, in the same manner as in \Cref{sec:exp-20220526}.
Clearly, $\chi_A(\lambda)=f(\lambda)^{5}g_1(\lambda)^2g_2(\lambda)g_3(\lambda)$.
Let $\cE=\{\e_1,\e_2,\dots,\e_{10d}\}$ be the standard basis of $\K^{10d}$.
Then, all the minimal annihilating polynomials 
$\pi_{A,\e_j}(\lambda)$ have the factor of $f(\lambda)^5$.
We focus on computing the generalized eigenspace associated to the roots of $f(\lambda)$.
Note that Procedure~\ref{alg:reduction-krylovGS} is employed in this experiment.

When computing the Jordan chains associated to just one eigenvalue, the computation of 
a {\KrylovGS} in \Cref{alg:ker-f(A)-lbar-generators} can be shortened 
by combining it with the computation of the minimal annihilating polynomials
(see \Cref{rem:omit}).


\Cref{tab:test-20220714-1,tab:test-20220714-2-annih} show the results of the computation.
Note that, according to \Cref{rem:omit}, 
in the column ``Time'' in \Cref{tab:test-20220714-1}, 
the computing time of \Cref{alg:ker-f(A)-lbar-generators} is excluded,
and, in \Cref{tab:test-20220714-2-annih}, the computing time of
\Cref{alg:ker-f(A)-lbar-generators} is written in parentheses.
The rest of the contents of the tables are the same as 
\Cref{tab:test-20240615-1,tab:test-20240615-1-detail-annih,tab:test-20240615-with-reduction-krylovGS,tab:test-20240615-with-reduction-krylovGS-detail-annih}.
respectively.
Note that the computing time is only for the generalized eigenvector computations associated to
the root of $f(\lambda)$.

The results show the following.
\begin{enumerate}
  \item \Cref{alg:ker-f(A)-lbar-generators}: in \Cref{sec:exp-20220526}, since the minimal 
  annihilating polynomial has only $f(A)$ as a factor, the computing time is relatively short. On 
  the other hand, in \Cref{sec:exp-20220714}, the minimal annihilating polynomial has
  more number of factors; thus, the computing time is relatively long.
  \item \Cref{alg:jordan-krylov-basis}: in \Cref{sec:exp-20220526}, since the number of vectors in 
  the {\JKbasis} is larger, the computing time is relatively long. On the other hand, in 
  \Cref{sec:exp-20220714}, the structure of $A'$ tells us that there is just one element in  
  the {\JKbasis}, and the computing time is very short. 
  \item Procedure~\ref{alg:reduction-krylovGS}: in \Cref{sec:exp-20220526}, 
  \Cref{tab:test-20240615-with-reduction-krylovGS} shows that 
  calling Procedure~\ref{alg:reduction-krylovGS} from \Cref{alg:jordan-krylov-basis} reduces the 
  computing time of the algorithms.
  \Cref{tab:test-20240615-1-detail-annih,tab:test-20240615-with-reduction-krylovGS-detail-annih} show that
  the computing time of \Cref{alg:jordan-krylov-basis,alg:single-jordan-chain} is reduced.
  While Procedure~\ref{alg:reduction-krylovGS} may not be effective in all cases, it is expected to be effective in many instances, as demonstrated by this example.
  \item Comparison between the theoretical complexity estimate and the actual computing time 
  (see \Cref{thm:time-complexity}): 
  \begin{enumerate}
    \item Regarding the computation of $f(A)$, in the present experiments, we used naive matrix multiplication (an $O(n^3)$ algorithm) and Horner's method for matrix polynomials (an $O(n)$ algorithm). Although the theoretical estimate is $O(n^4)$, the results in 
    \Cref{sec:exp-20220526,sec:exp-20220714} 
    appear to reflect this theoretical estimate.
    \item Regarding the computational complexity of the part following the computation of $f(A)$, in the example in \Cref{sec:exp-20220526}, since $r \simeq n$, the theoretical estimate approaches 
    $O(n^4)$, and the actual computing time also appears to be close to this estimate.
    On the other hand, in the computational results of \Cref{sec:exp-20220714}
    (\Cref{tab:test-20220714-2-annih}), the computing times for $f(A)$ and 
    \Cref{alg:ker-f(A)-lbar-generators} were approximately $O(n^4)$, while the computing time for 
    \Cref{alg:jordan-krylov-basis} remained nearly constant (see 2. above), and those for 
    Procedure~\ref{alg:reduction-krylovGS} and \Cref{alg:single-jordan-chain} were approximately $O(n^2)$. Given that the value of $r$ in the theoretical complexity estimate 
    (\Cref{thm:time-complexity}) is smaller than $n$ in this experiment, the actual computing times for Procedure~\ref{alg:reduction-krylovGS} and \Cref{alg:single-jordan-chain} 
    also appear to be close to the theoretical estimate of $O(n^2 r^2)$.
  \end{enumerate}
\end{enumerate}

\begin{table}[tb]
   \centering
  \caption{Computing time and memory usage
   for the case in \Cref{sec:exp-20220714}.}
  \begin{tabular}{r|r|r|r|r|r}
    \hline
    $\deg(f)$ & $\textrm{size}(A)$ & Time (sec.) & Memory usage (MB) & Average $|a_{ij}|$ & max $|a_{ij}|$\\
    \hline
    4 & 40 & 0.04 & 348.03 & 38.42	& 984 \\
    8 & 80 & 0.64 & $5.24\times 10^3$ &  38.91	& 1942 \\
    12 & 120 & 3.34 & $2.72\times 10^4$ & 63.28 & 4209  \\
    16 & 160 & 11.68 & $8.88\times 10^4$ &  131.31 & 10578 \\
    20 & 200 & 31.22 & $2.32\times 10^5$ & 108.55 & 6495 \\
    \hline
  \end{tabular}
  \label{tab:test-20220714-1}
\end{table}


\begin{table}[tb]
  \centering
  \caption{Computing time of each step of the algorithm
  for the case in \Cref{sec:exp-20220714}.
  Computing time for the minimum annihilating polynomials $\pi_{A,j}(\lambda)$ is not included in the total computing time (\Cref{tab:test-20220714-1}).}
  \begin{tabular}{r|r|r|r|r|r|r|r}
    \hline
    $\deg(f)$ & $\textrm{size}(A)$ &  \multicolumn{1}{c|}{$f(A)$}  &
    \multicolumn{1}{c|}{($\pi_{A,j}(\lambda)$)} & 
    \multicolumn{1}{c|}{(Alg.~\ref{alg:ker-f(A)-lbar-generators})} & 
    \multicolumn{1}{c|}{Alg.~\ref{alg:jordan-krylov-basis}} &
    \multicolumn{1}{c|}{Proc.~\ref{alg:reduction-krylovGS}} &
    \multicolumn{1}{c}{Alg.~\ref{alg:single-jordan-chain}} \\ 
    \hline
    4 & 40 & 0.01 & (0.09) & (0.04) & 2.02$\times 10^{-5}$ & 0.01 & 0.02 \\
    8 & 80 & 0.24 & (1.38) & (0.67) & 3.24$\times 10^{-5}$ & 0.06 & 0.35 \\ 
    12 & 120 & 1.44 & (7.03) & (3.45) & 4.31$\times 10^{-5}$ & 0.05 & 1.85 \\ 
    16 & 160 & 5.29 & (23.78) & (11.62) & 5.41$\times 10^{-5}$ & 0.10 & 6.29 \\ 
    20 & 200 & 14.33 & (64.30) & (31.64) & 6.81$\times 10^{-5}$ & 0.18 & 16.71 \\ 
    \hline
  \end{tabular}
  \label{tab:test-20220714-2-annih}
\end{table}

\subsection{Comparison of performance with Maple}
\label{sec:maple}

In this section, we compare the performance of the proposed algorithm with the implementation of corresponding algorithms on the computer algebra system Maple. 
We conducted the experiments in \Cref{sec:exp-20220526,sec:exp-20220714} using Maple 2021 (\cite{maple2021}) on the same computing environment as the above.
Note that the results in \Cref{sec:exp-20220526} are based on the results for
``With Proc.\ 4'' in \Cref{tab:test-20240615-with-reduction-krylovGS}.

\subsubsection{Comparison with the Frobenius normal form}
\label{sec:maple-Frobenius}

There are algorithms for computing generalized eigenvectors via the Frobenius normal 
form proposed by \cite{tak-yok1990} and by \cite{mor-kur2001}.
Unfortunately, we do not have the complete implementation of their algorithms; 
thus, we have compared an implementation for computing the Frobenius normal form in Maple with our algorithm in Risa/Asir.
In this experiment, we use the function ``FrobeniusForm'' in the ``LinearAlgebra'' package of Maple.
\Cref{tab:test-maple-Frobenius-5211-01} shows the results of computation for the examples in \Cref{sec:exp-20220714} in the case that the characteristic polynomial has multiple irreducible factors.
Computing time shows the CPU time, including the time for the garbage collection.
In the result with Maple, Memory usage, documented as displayed, is the total amount of memory requested by Maple, which is similar to the case of Risa/Asir.
The computing time of the minimal annihilating polynomials is included in our algorithm.
The memory usage of our algorithm is the memory usage during the computation of the minimal annihilating polynomials.

We see that the computing time of the Frobenius normal form is similar to that of minimal annihilating polynomials in our algorithm. 
Regarding the computational complexity of the Frobenius normal form, it is known that the arithmetic complexity is at most $O(n^3)$ (\cite{sto1998}). 
On the other hand, for the computation of the minimal annihilating polynomial, when using a probabilistic algorithm, the complexity is $O(n^2 q^2 r)$ (see
Proposition~\ref{prop:time-complexity-minimal-annihilating-polynomials}). 
Here, since $q$ is small compared to $n$, and considering that the test matrix becomes dense through similarity transformation, it is reasonable to assume that $r$ is close to $n$. 
Thus, this complexity can be regarded as equivalent to $O(n^3)$. 
As a result, we see that the computing times of the Frobenius normal form and the minimal annihilating polynomial reflect their respective theoretical complexity estimates.

We remark that our algorithm gives generalized eigenvectors with the most simplified representation as a polynomial in the eigenvalue.

\begin{table}[t]
  \centering
  \caption{Computing time and memory usage of the Frobenius normal form of the matrices used 
  in \Cref{sec:exp-20220714} using Maple.
  The memory utilization is measured in megabytes (MB).
  The column ``Alg. \ref{alg:ker-f(A)-lbar-generators}--\ref{alg:single-jordan-chain}'' shows the total computing time of \Cref{alg:ker-f(A)-lbar-generators,alg:jordan-krylov-basis},
  Procedure \ref{alg:reduction-krylovGS}, and \Cref{alg:single-jordan-chain}, shown in 
   \Cref{tab:test-20220714-2-annih}.
  See \Cref{sec:maple-Frobenius} for details.}
  \label{tab:test-maple-Frobenius-5211-01}
  \begin{tabular}{r|r|r|r|r|r|r}
    \hline
    \multirow{2}{*}{$\deg(f)$} & \multirow{2}{*}{$\textrm{size}(A)$} &
    \multicolumn{2}{c|}{Maple (FrobeniusForm)}  & 
    \multicolumn{2}{c}{($\pi_{A,j}(\lambda)$)} &
    \multicolumn{1}{c}{Alg. \ref{alg:ker-f(A)-lbar-generators}--\ref{alg:single-jordan-chain}}\\
    & & Time (sec.) & Memory 
    & Time (sec.) & Memory & Time (sec.)\\
    \hline
    4 & 40 & 0.116 & 20.33 & (0.09) & 10.97 & 0.09 \\
    8 & 80 & 1.45 & $0.23\times 10^3$ & (1.38) & 162.85 & 1.31\\
    12 & 120 & 8.06 & $1.17\times 10^3$ & (7.03) & 860.56 & 6.79 \\
    16 & 160 & 24.26 & $3.42\times 10^3$ & (23.78) & $2.84\times 10^3$ & 23.29\\
    20 & 200 & 66.14 & $8.76\times 10^3$ & (64.30) & $7.45\times 10^3$ & 62.85\\
    \hline
  \end{tabular}
\end{table}

\subsubsection{Computing generalized eigenvectors}
\label{sec:maple-eigenvectors}

In this experiment, we compare the performance of the proposed algorithm with that of Maple by executing the function ``Eigenvectors'' 
in the ``LinearAlgebra'' package, which computes (generalized) eigenvectors. 
According to the disclosed source code\footnote{In Maple, a disclosed source code can be viewed via the following commands: ``\texttt{interface(verboseproc = 2): print(LinearAlgebra[Eigenvectors]);}'' (\cite{maple13-intro-prog}).}, the function calculates the characteristic polynomial when the components of the matrix are integers or rational numbers. Then, by sequentially solving the system of linear equations, it calculates the general eigenvectors in the algebraic extension field.

\Cref{tab:test-maple-322111-02,tab:test-maple-5211-01} show the results of computation. 
Computing time includes the time for calculating the characteristic polynomial.
The results of ``Proposed method'' are those shown in 
\Cref{tab:test-20240615-1,tab:test-20240615-with-reduction-krylovGS} (With Proc.\ 4), respectively.
We see that while computing time in Maple is long, the proposed method efficiently computes the generalized eigenspace.

For reference, \Cref{tab:test-maple-charapoly-322111-02,tab:test-maple-charapoly-5211-01} show the 
computing time of the characteristic polynomial of $A$ used in
\Cref{sec:exp-20220526,sec:exp-20220714}, respectively, by using Maple.
Note that the computing time is denoted in milliseconds (ms).
Although the degree of $f(\lambda)$ appearing in the experiment in 
\Cref{tab:test-maple-322111-02,tab:test-maple-5211-01} is up to 
12 and 8, respectively, 
we have measured the computing time of all the characteristic polynomials used in 
\Cref{sec:exp-20220526,sec:exp-20220714}.

\begin{table}[t]
   \centering
  \caption{Computing time and memory usage of Generalized eigenvectors for the case in \Cref{sec:exp-20220526} using Maple.
  The memory utilization is measured in megabytes (MB).
  See \Cref{sec:maple-eigenvectors} for details.}
  \label{tab:test-maple-322111-02}
  \begin{tabular}{r|r|r|r|r|r}
    \hline
    \multirow{2}{*}{$\deg(f)$} & \multirow{2}{*}{$\textrm{size}(A)$} &
    \multicolumn{2}{c|}{Maple (Eigenvectors)}  & 
    \multicolumn{2}{c}{Proposed method} \\
    & & Time (sec.) & Memory usage & Time (sec.) & Memory Usage \\
    \hline
    2 & 20 & 0.72 & 86.95 & 0.03 & 40.28 \\
    4 & 40 & 13.54 & $1.84\times 10^3$ & 0.17 & $0.27\times 10^3$\\
    6 & 60 & 62.80 & $9.31\times 10^3$ & 0.65 & $0.93\times 10^3$\\
    8 & 80 & 396.0 & $6.09\times 10^4 $ & 1.40 & $0.25\times 10^4$\\
    10 & 100 & 2170.8 & $21.7\times 10^4$ & 2.96 & $0.58\times 10^4$ \\
    12 & 120 & 12096.0 & $69.6\times 10^4$ & 6.14 & $1.27\times 10^4$\\
    \hline
  \end{tabular}
\end{table}



\begin{table}[t]
  \centering
  \caption{Computing time and memory usage of Generalized eigenvectors for the case in 
  \Cref{sec:exp-20220714} using Maple.
  The memory utilization is measured in megabytes (MB).
  See \Cref{sec:maple-eigenvectors} for details.}
  \label{tab:test-maple-5211-01}
  \begin{tabular}{r|r|r|r|r|r}
    \hline
    \multirow{2}{*}{$\deg(f)$} & \multirow{2}{*}{$\textrm{size}(A)$} &
    \multicolumn{2}{c|}{Maple (Eigenvectors)}  & 
    \multicolumn{2}{c}{Proposed method} \\
    & & Time (sec.) & Memory usage & Time (sec.) & Memory Usage \\
    \hline
    4 & 40 & 73.72 & $11.00\times 10^3$ & 0.04 & $0.35\times 10^3$\\
    8 & 80 & 3454.80 & $389.26\times 10^3$ & 0.64 & $5.24\times 10^3$\\
    12 & 120 & $>$8h & N/A & 3.34 & $2.72\times 10^4$\\
    \hline
  \end{tabular}
\end{table}

\begin{table}[t]
   \centering
  \caption{Computing time and memory usage of the characteristic polynomials of the matrices used 
  in \Cref{sec:exp-20220526}, using Maple.
  The memory utilization is measured in megabytes (MB).
  See \Cref{sec:maple} for details.}
  \label{tab:test-maple-charapoly-322111-02}
  \begin{tabular}{r|r|r|r}
    \hline
    $\deg(f)$ & $\textrm{size}(A)$ & Time (ms) & Memory usage \\
    \hline
    2 & 20 & 1.2 & 0.10 \\
    4 & 40 & 3.8 & 0.21 \\
    6 & 60 & 9.4 & 0.36 \\
    8 & 80 & 18.6 & 0.58 \\
    10 & 100 & 35.0 & 0.84 \\
    12 & 120 & 53.5 & 1.16 \\
    14 & 140 & 82.1 & 1.50 \\
    16 & 160 & 121.6 & 1.96 \\
    18 & 180 & 166.7 & 2.43 \\
    20 & 200 & 261.2 & 2.93 \\
    \hline
  \end{tabular}
\end{table}

\begin{table}[t]
   \centering
  \caption{Computing time and memory usage of the characteristic polynomials of the matrices used 
  in \Cref{sec:exp-20220714}, using Maple.
  The memory utilization is measured in megabytes (MB).
  See \Cref{sec:maple} for details.}
  \label{tab:test-maple-charapoly-5211-01}
  \begin{tabular}{r|r|r|r}
    \hline
    $\deg(f)$ & $\textrm{size}(A)$ & Time (ms) & Memory usage \\
    \hline
    4 & 40 & 6.5 & 0.21 \\
    8 & 80 & 40.1 & 0.57 \\
    12 & 120 & 121.9 & 1.14 \\
    16 & 160 & 282.8 & 1.95 \\
    20 & 200 & 532.7 & 2.90 \\
    \hline
  \end{tabular}
\end{table}

\subsubsection{Computing Jordan canonical form and Jordan chains}
\label{sec:maple-Jordan}

In this experiment, we compare the performance of the proposed algorithm with that of Maple by executing the function ``JordanForm'' in the ``LinearAlgebra'' package,
which computes the Jordan canonical form with the Jordan chains.
In our experiment, we set the option ``output = `Q' '' to obtain the Jordan chains.

\Cref{tab:test-maple-Jordan-322111-02,tab:test-maple-Jordan-5211-01} show the results of the computation.
The results of ``Proposed method'' are those shown in 
\Cref{tab:test-20240615-1,tab:test-20240615-with-reduction-krylovGS} (With Proc.\ 4), respectively.
We see that, in Maple, computing time is significantly lengthy even for small-sized inputs; the proposed method efficiently computes the Jordan chains.
Especially for the example in \Cref{sec:exp-20220714}, computational efficiency is achieved by restricting the eigenvalue of interest to the root of $f(\lambda)$.

\begin{table}[t]
   \centering
  \caption{Computing time and memory usage of Jordan chains for the case in \Cref{sec:exp-20220526} using Maple.
  The memory utilization is measured in megabytes (MB).
  See \Cref{sec:maple-Jordan} for details.}
  \label{tab:test-maple-Jordan-322111-02}
  \begin{tabular}{r|r|r|r|r|r}
    \hline
    \multirow{2}{*}{$\deg(f)$} & \multirow{2}{*}{$\textrm{size}(A)$} &
    \multicolumn{2}{c|}{Maple (JordanForm)}  & 
    \multicolumn{2}{c}{Proposed method} \\
    & & Time (sec.) & Memory usage & Time (sec.) & Memory Usage \\
    \hline
    2 & 20 & 0.17 & 22.65 & 0.03 & 40.28 \\
    4 & 40 & 420.6 & $1.54\times 10^3$ & 0.17 & $0.28\times 10^3$\\
    6 & 60 & $>$8h & N/A & 0.65 & $0.93\times 10^3$\\
    \hline
  \end{tabular}
\end{table}

\begin{table}[t]
   \centering
  \caption{Computing time and memory usage of Jordan chains for the case in \Cref{sec:exp-20220714} using Maple.
  The memory utilization is measured in megabytes (MB).
  See \Cref{sec:maple-Jordan} for details.}
  \label{tab:test-maple-Jordan-5211-01}
  \begin{tabular}{r|r|r|r|r|r}
    \hline
    \multirow{2}{*}{$\deg(f)$} & \multirow{2}{*}{$\textrm{size}(A)$} &
    \multicolumn{2}{c|}{Maple (JordanForm)}  & 
    \multicolumn{2}{c}{Proposed method} \\
    & & Time (sec.) & Memory usage & Time (sec.) & Memory Usage \\
    \hline
    4 & 40 & 420.6 & $25.8\times 10^3$ & 0.04 & $0.35\times 10^3$ \\
    8 & 80 &  $>$8h & N/A & 0.64 & $5.24\times 10^3$\\
    \hline
  \end{tabular}
\end{table}

\section{Concluding remarks}

In this paper, we have proposed an exact algorithm for computing
generalized eigenspaces of matrices of integers or rational numbers
by exact computation. The resulting algorithm computes generalized eigenspaces
and the structure of Jordan chains 
by computing a {\JKbasis} of $\ker f(A)^{\lbar}$ using {\JKelim}.
In {\JKelim}, minimal annihilating polynomials are effectively used
for computing a {\KrylovGS}.

A feature of the present method is that 
it computes generalized eigenvectors in decreasing order 
of their ranks, which is the opposite of how they are computed in a conventional manner.
In the conventional method, such as the one shown in Maple's built-in function 
(see \Cref{sec:maple-eigenvectors}), 
generalized eigenvectors are computed in 
increasing order, and it is hard to construct Jordan chains in general.
In contrast, in the proposed method, Jordan chains of eigenvectors
are directly computed from {\JKbasis} without solving 
generalized eigenequations.

Another feature of the present method is that 
the cost of computation can be reduced by computing
just a part of the generalized eigenspace associated to a specific eigenvalue.
In conventional methods, even if 
the generalized eigenspace of our interest is relatively small compared to the whole
generalized eigenspace,
a whole matrix gets transformed
for solving the system of linear equations or 
computing canonical forms of the matrix, 
which may make the computation inefficient.
On the other hand, the present method concentrates the computation on 
$\ker f(A)^{\lbar}$ using {\JKelim}, which means that 
a smaller part of the generalized eigenspace can be computed with a smaller amount of computation
in an effective way.

The experimental results show that the computing time for the minimal annihilating polynomial takes a relatively large portion of the total time required to compute the generalized eigenspace. In the experiments of \Cref{sec:exp-20220526}, it was comparable to the computing time of \Cref{alg:jordan-krylov-basis} 
(see \Cref{tab:test-20240615-1-detail-annih,tab:test-20240615-with-reduction-krylovGS-detail-annih}), while in the experiments of \Cref{sec:exp-20220714}, it dominated the overall computing time (see 
\Cref{tab:test-20220714-2-annih}).
Two points can be made regarding the improvement of computational efficiency, including that of the minimal annihilating polynomial. The first is an efficiency gain that depends on the structure of the given problem. In the experiments, 
the similarity transformations performed during the definition of the input matrices resulted in an increase in the number of bases for which $f(\lambda)$ is a factor of the minimal annihilating polynomial (denoted by $r$ in \cref{eq:rt}). However, depending on the specific structure of the given problem, the value of $r$ may be relatively small. In such cases, it is expected that the computational efficiency of the minimal annihilating polynomial will improve.
Of course, finding an application in which such an effect can be realized remains one of the future challenges.
The second point is an efficiency improvement through implementation. As pointed out in our previous paper (\cite{taj-oha-ter2018}), the computation of the minimal annihilating polynomial includes parts that can be parallelized. Therefore, an implementation using parallel computation is expected to improve computational efficiency. This is also one of our future challenges.

As a further extension of the proposed method, it would be beneficial to apply the proposed method to matrices with polynomial components, 
for efficiently computing the generalized eigenspace in the algebraic extension field of the rational function field.


\bibliographystyle{elsarticle-harv}

\bibliography{terui-e}

\def\cprime{$'$}
\begin{thebibliography}{38}
\expandafter\ifx\csname natexlab\endcsname\relax\def\natexlab#1{#1}\fi
\providecommand{\url}[1]{\texttt{#1}}
\providecommand{\href}[2]{#2}
\providecommand{\path}[1]{#1}
\providecommand{\DOIprefix}{doi:}
\providecommand{\ArXivprefix}{arXiv:}
\providecommand{\URLprefix}{URL: }
\providecommand{\Pubmedprefix}{pmid:}
\providecommand{\doi}[1]{\href{http://dx.doi.org/#1}{\path{#1}}}
\providecommand{\Pubmed}[1]{\href{pmid:#1}{\path{#1}}}
\providecommand{\bibinfo}[2]{#2}
\ifx\xfnm\relax \def\xfnm[#1]{\unskip,\space#1}\fi
\bibitem[{Albrecht(2012)}]{alb2012}
\bibinfo{author}{Albrecht, M.R.}, \bibinfo{year}{2012}.
\newblock \bibinfo{title}{{The M4RIE library for dense linear algebra over small fields with even characteristic}}, in: \bibinfo{booktitle}{Proceedings of the 37th International Symposium on Symbolic and Algebraic Computation}, \bibinfo{publisher}{ACM}, \bibinfo{address}{New York, NY, USA}. pp. \bibinfo{pages}{28--34}.
\newblock \DOIprefix\doi{10.1145/2442829.2442838}.
\bibitem[{Augot and Camion(1997)}]{aug-cam1997}
\bibinfo{author}{Augot, D.}, \bibinfo{author}{Camion, P.}, \bibinfo{year}{1997}.
\newblock \bibinfo{title}{{On the computation of minimal polynomials, cyclic vectors, and frobenius forms}}.
\newblock \bibinfo{journal}{Linear Algebra Appl.} \bibinfo{volume}{260}, \bibinfo{pages}{61--94}.
\newblock \DOIprefix\doi{10.1016/S0024-3795(97)80005-5}.
\bibitem[{Barkatou and Pfl\"ugel(1999)}]{bar1999}
\bibinfo{author}{Barkatou, M.}, \bibinfo{author}{Pfl\"ugel, E.}, \bibinfo{year}{1999}.
\newblock \bibinfo{title}{{An Algorithm Computing the Regular Formal Solutions of a System of Linear Differential Equations}}.
\newblock \bibinfo{journal}{Journal of Symbolic Computation} \bibinfo{volume}{28}, \bibinfo{pages}{569--587}.
\newblock \DOIprefix\doi{https://doi.org/10.1006/jsco.1999.0315}.
\bibitem[{Bostan et~al.(2008)Bostan, Jeannerod and Schost}]{bos-jea-sch2008}
\bibinfo{author}{Bostan, A.}, \bibinfo{author}{Jeannerod, C.P.}, \bibinfo{author}{Schost, {\'{E}}.}, \bibinfo{year}{2008}.
\newblock \bibinfo{title}{{Solving structured linear systems with large displacement rank}}.
\newblock \bibinfo{journal}{Theor.\ Comput.\ Sci.} \bibinfo{volume}{407}, \bibinfo{pages}{155--181}.
\newblock \DOIprefix\doi{10.1016/j.tcs.2008.05.014}.
\bibitem[{Chen and Storjohann(2005)}]{che-sto2005}
\bibinfo{author}{Chen, Z.}, \bibinfo{author}{Storjohann, A.}, \bibinfo{year}{2005}.
\newblock \bibinfo{title}{A {BLAS} based {C} library for exact linear algebra on integer matrices}, in: \bibinfo{booktitle}{Proceedings of the 2005 International Symposium on Symbolic and Algebraic Computation}, \bibinfo{publisher}{ACM}, \bibinfo{address}{New York, NY, USA}. pp. \bibinfo{pages}{92--99}.
\newblock \DOIprefix\doi{10.1145/1073884.1073899}.
\bibitem[{Cox et~al.(2005)Cox, Little and O'Shea}]{CLO2005}
\bibinfo{author}{Cox, D.A.}, \bibinfo{author}{Little, J.}, \bibinfo{author}{O'Shea, D.}, \bibinfo{year}{2005}.
\newblock \bibinfo{title}{Using Algebraic Geometry}.
\newblock \bibinfo{publisher}{Springer}.
\newblock \DOIprefix\doi{10.1007/978-1-4757-6911-1}.
\bibitem[{Dumas et~al.(2001)Dumas, David~Saunders and Villard}]{dum-sau-vil2001}
\bibinfo{author}{Dumas, J.G.}, \bibinfo{author}{David~Saunders, B.}, \bibinfo{author}{Villard, G.}, \bibinfo{year}{2001}.
\newblock \bibinfo{title}{On efficient sparse integer matrix smith normal form computations}.
\newblock \bibinfo{journal}{Journal of Symbolic Computation} \bibinfo{volume}{32}, \bibinfo{pages}{71--99}.
\newblock \DOIprefix\doi{10.1006/jsco.2001.0451}.
\bibitem[{Dumas et~al.(2002a)Dumas, Gautier, Giesbrecht, Giorgi, Hovinen, Kaltofen, Saunders, Turner and Villard}]{linbox2002}
\bibinfo{author}{Dumas, J.G.}, \bibinfo{author}{Gautier, T.}, \bibinfo{author}{Giesbrecht, M.}, \bibinfo{author}{Giorgi, P.}, \bibinfo{author}{Hovinen, B.}, \bibinfo{author}{Kaltofen, E.}, \bibinfo{author}{Saunders, B.D.}, \bibinfo{author}{Turner, W.J.}, \bibinfo{author}{Villard, G.}, \bibinfo{year}{2002}a.
\newblock \bibinfo{title}{{LinBox}: A generic library for exact linear algebra}, in: \bibinfo{editor}{Cohen, A.M.}, \bibinfo{editor}{Gao, X.S.}, \bibinfo{editor}{Takayama, N.} (Eds.), \bibinfo{booktitle}{Mathematical Software (Proceedings of the 2002 International Congress of Mathematical Software)}, \bibinfo{publisher}{World Scientific}. pp. \bibinfo{pages}{40--50}.
\newblock \DOIprefix\doi{10.1142/9789812777171_0005}.
\bibitem[{Dumas et~al.(2002b)Dumas, Gautier and Pernet}]{dum-gau-per2002}
\bibinfo{author}{Dumas, J.G.}, \bibinfo{author}{Gautier, T.}, \bibinfo{author}{Pernet, C.}, \bibinfo{year}{2002}b.
\newblock \bibinfo{title}{{Finite field linear algebra subroutines}}, in: \bibinfo{booktitle}{Proceedings of the 2002 International Symposium on Symbolic and Algebraic Computation}, \bibinfo{publisher}{ACM}, \bibinfo{address}{New York, NY, USA}. pp. \bibinfo{pages}{63--74}.
\newblock \DOIprefix\doi{10.1145/780506.780515}.
\bibitem[{Dumas et~al.(2004)Dumas, Giorgi and Pernet}]{dum2004}
\bibinfo{author}{Dumas, J.G.}, \bibinfo{author}{Giorgi, P.}, \bibinfo{author}{Pernet, C.}, \bibinfo{year}{2004}.
\newblock \bibinfo{title}{{FFPACK: finite field linear algebra package}}, in: \bibinfo{booktitle}{Proceedings of the 2004 International Symposium on Symbolic and Algebraic Computation}, \bibinfo{publisher}{ACM Press}, \bibinfo{address}{New York, NY, USA}. pp. \bibinfo{pages}{119--126}.
\newblock \DOIprefix\doi{10.1145/1005285.1005304}.
\bibitem[{Dumas et~al.(2008)Dumas, Giorgi and Pernet}]{dum-gio-per2008}
\bibinfo{author}{Dumas, J.G.}, \bibinfo{author}{Giorgi, P.}, \bibinfo{author}{Pernet, C.}, \bibinfo{year}{2008}.
\newblock \bibinfo{title}{Dense linear algebra over word-size prime fields: the {FFLAS} and {FFPACK} packages}.
\newblock \bibinfo{journal}{ACM Trans. on Mathematical Software (TOMS)} \bibinfo{volume}{35}, \bibinfo{pages}{1--42}.
\newblock \DOIprefix\doi{10.1145/1391989.1391992}.
\bibitem[{Dumas et~al.(2005)Dumas, Pernet and Wan}]{dum-per-wan2005}
\bibinfo{author}{Dumas, J.G.}, \bibinfo{author}{Pernet, C.}, \bibinfo{author}{Wan, Z.}, \bibinfo{year}{2005}.
\newblock \bibinfo{title}{{Efficient computation of the characteristic polynomial}}, in: \bibinfo{booktitle}{Proceedings of the 2005 International Symposium on Symbolic and Algebraic Computation}, \bibinfo{publisher}{ACM Press}, \bibinfo{address}{New York, NY, USA}. pp. \bibinfo{pages}{140--147}.
\newblock \DOIprefix\doi{10.1145/1073884.1073905}.
\bibitem[{Eberly et~al.(2006)Eberly, Giesbrecht, Giorgi, Storjohann and Villard}]{ebe-gie-gio-sto-vil2006}
\bibinfo{author}{Eberly, W.}, \bibinfo{author}{Giesbrecht, M.}, \bibinfo{author}{Giorgi, P.}, \bibinfo{author}{Storjohann, A.}, \bibinfo{author}{Villard, G.}, \bibinfo{year}{2006}.
\newblock \bibinfo{title}{{Solving sparse rational linear systems}}, in: \bibinfo{booktitle}{Proceedings of the 2006 International Symposium on Symbolic and Algebraic Computation}, \bibinfo{publisher}{ACM}, \bibinfo{address}{New York, NY, USA}. pp. \bibinfo{pages}{63--70}.
\newblock \DOIprefix\doi{10.1145/1145768.1145785}.
\bibitem[{Giorgi(2019)}]{gio2019}
\bibinfo{author}{Giorgi, P.}, \bibinfo{year}{2019}.
\newblock \bibinfo{title}{{Efficient algorithms and implementation in exact linear algebra}}.
\newblock \bibinfo{type}{Habilitation {\`a} diriger des recherches}. {Universit{\'e} de Montpellier}.
\newblock \URLprefix \url{https://hal-lirmm.ccsd.cnrs.fr/tel-02360023}.
\bibitem[{Havas and Wagner(1999)}]{hav-wag1999}
\bibinfo{author}{Havas, G.}, \bibinfo{author}{Wagner, C.}, \bibinfo{year}{1999}.
\newblock \bibinfo{title}{Some performance studies in exact linear algebra}, in: \bibinfo{editor}{Cooperman, G.}, \bibinfo{editor}{Jessen, E.}, \bibinfo{editor}{Michler, G.} (Eds.), \bibinfo{booktitle}{Workshop on wide area networks and high performance computing}, \bibinfo{publisher}{Springer}, \bibinfo{address}{London}. pp. \bibinfo{pages}{161--170}.
\newblock \DOIprefix\doi{10.1007/BFb0110086}.
\bibitem[{Jeannerod et~al.(2013)Jeannerod, Pernet and Storjohann}]{jea-per-sto2013}
\bibinfo{author}{Jeannerod, C.P.}, \bibinfo{author}{Pernet, C.}, \bibinfo{author}{Storjohann, A.}, \bibinfo{year}{2013}.
\newblock \bibinfo{title}{{Rank-profile revealing Gaussian elimination and the CUP matrix decomposition}}.
\newblock \bibinfo{journal}{Journal of Symbolic Computation} \bibinfo{volume}{56}, \bibinfo{pages}{46--68}.
\newblock \DOIprefix\doi{10.1016/j.jsc.2013.04.004}.
\bibitem[{Kreuzer and Robbiano(2016)}]{kre-rob2016}
\bibinfo{author}{Kreuzer, M.}, \bibinfo{author}{Robbiano, L.}, \bibinfo{year}{2016}.
\newblock \bibinfo{title}{{Computational Linear and Commutative Algebra}}.
\newblock \bibinfo{publisher}{Springer, Cham}.
\newblock \DOIprefix\doi{10.1007/978-3-319-43601-2}.
\bibitem[{{Maplesoft, a division of Waterloo Maple Inc.}(2021)}]{maple2021}
\bibinfo{author}{{Maplesoft, a division of Waterloo Maple Inc.}}, \bibinfo{year}{2021}.
\newblock \bibinfo{title}{Maple 2021 [computer software]}.
\newblock \URLprefix \url{https://www.maplesoft.com/products/maple/}. \bibinfo{note}{accessed 2024-06-10}.
\bibitem[{May et~al.(2007)May, Saunders and Wan}]{may-sau-wan2007}
\bibinfo{author}{May, J.P.}, \bibinfo{author}{Saunders, D.}, \bibinfo{author}{Wan, Z.}, \bibinfo{year}{2007}.
\newblock \bibinfo{title}{{Efficient matrix rank computation with application to the study of strongly regular graphs}}, in: \bibinfo{booktitle}{Proceedings of the 2007 International Symposium on Symbolic and Algebraic Computation}, \bibinfo{publisher}{ACM}, \bibinfo{address}{New York, NY, USA}. pp. \bibinfo{pages}{277--284}.
\newblock \DOIprefix\doi{10.1145/1277548.1277586}.
\bibitem[{Monagan et~al.(2009)Monagan, Geddes, Heal, Labahn, Vorkoetter, McCarron and DeMarco}]{maple13-intro-prog}
\bibinfo{author}{Monagan, M.B.}, \bibinfo{author}{Geddes, K.O.}, \bibinfo{author}{Heal, K.M.}, \bibinfo{author}{Labahn, G.}, \bibinfo{author}{Vorkoetter, S.M.}, \bibinfo{author}{McCarron, J.}, \bibinfo{author}{DeMarco, P.}, \bibinfo{year}{2009}.
\newblock \bibinfo{title}{Maple 13 Introductory Programming Guide}.
\newblock \bibinfo{publisher}{Maplesoft , a division of Waterloo Maple Inc.}
\bibitem[{Moritsugu(2004)}]{mor2004}
\bibinfo{author}{Moritsugu, S.}, \bibinfo{year}{2004}.
\newblock \bibinfo{title}{A practical implementation of modular algorithms for {F}robenius normal forms of rational matrices}.
\newblock \bibinfo{journal}{IPSJ Journal} \bibinfo{volume}{45}, \bibinfo{pages}{1630--1641}.
\newblock \URLprefix \url{http://id.nii.ac.jp/1001/00010877/}.
\bibitem[{Moritsugu and Kuriyama(2001)}]{mor-kur2001}
\bibinfo{author}{Moritsugu, S.}, \bibinfo{author}{Kuriyama, K.}, \bibinfo{year}{2001}.
\newblock \bibinfo{title}{Symbolic computation of eigenvalues, eigenvectors and generalized eigenvectors of matrices by computer algebra ({in Japanese})}.
\newblock \bibinfo{journal}{Transactions of the Japan Society for Industrial and Applied Mathematics} \bibinfo{volume}{11}, \bibinfo{pages}{103--120}.
\newblock \DOIprefix\doi{10.11540/jsiamt.11.2_103}.
\bibitem[{Neunh{\"{o}}ffer and Praeger(2008)}]{neu-pra2008}
\bibinfo{author}{Neunh{\"{o}}ffer, M.}, \bibinfo{author}{Praeger, C.E.}, \bibinfo{year}{2008}.
\newblock \bibinfo{title}{Computing minimal polynomials of matrices}.
\newblock \bibinfo{journal}{LMS Journal of Computation and Mathematics} \bibinfo{volume}{11}, \bibinfo{pages}{252--279}.
\newblock \DOIprefix\doi{10.1112/S1461157000000590}.
\bibitem[{Noro(2003)}]{nor2003}
\bibinfo{author}{Noro, M.}, \bibinfo{year}{2003}.
\newblock \bibinfo{title}{A computer algebra system: {Risa/Asir}}, in: \bibinfo{editor}{Joswig, M.}, \bibinfo{editor}{Takayama, N.} (Eds.), \bibinfo{booktitle}{Algebra, Geometry and Software Systems}, \bibinfo{publisher}{Springer}. pp. \bibinfo{pages}{147--162}.
\newblock \DOIprefix\doi{10.1007/978-3-662-05148-1_8}.
\bibitem[{Ohara and Tajima(2009)}]{oha-taj2009}
\bibinfo{author}{Ohara, K.}, \bibinfo{author}{Tajima, S.}, \bibinfo{year}{2009}.
\newblock \bibinfo{title}{Spectral decomposition and eigenvectors of matrices by residue calculus}, in: \bibinfo{booktitle}{The Joint Conference of ASCM 2009 and MACIS 2009: Asian Symposium of Mathematics; Mathematical Aspects of Computer and Information Sciences}. \bibinfo{publisher}{Faculty of Mathematics, Kyushu University}. volume~\bibinfo{volume}{22} of \textit{\bibinfo{series}{COE Lecture Note}}, pp. \bibinfo{pages}{137--140}.
\newblock \URLprefix \url{https://hdl.handle.net/2324/16844}.
\bibitem[{Pernet and Stein(2010)}]{per-ste2010}
\bibinfo{author}{Pernet, C.}, \bibinfo{author}{Stein, W.}, \bibinfo{year}{2010}.
\newblock \bibinfo{title}{{Fast computation of Hermite normal forms of random integer matrices}}.
\newblock \bibinfo{journal}{Journal of Number Theory} \bibinfo{volume}{130}, \bibinfo{pages}{1675--1683}.
\newblock \DOIprefix\doi{10.1016/j.jnt.2010.01.017}.
\bibitem[{Saunders et~al.(2011)Saunders, Wood and Youse}]{sau-woo-you2011}
\bibinfo{author}{Saunders, B.D.}, \bibinfo{author}{Wood, D.H.}, \bibinfo{author}{Youse, B.S.}, \bibinfo{year}{2011}.
\newblock \bibinfo{title}{{Numeric-symbolic exact rational linear system solver}}, in: \bibinfo{booktitle}{Proceedings of the 36th International Symposium on Symbolic and Algebraic Computation}, \bibinfo{publisher}{ACM}, \bibinfo{address}{New York, NY, USA}. pp. \bibinfo{pages}{305--312}.
\newblock \DOIprefix\doi{10.1145/1993886.1993932}.
\bibitem[{Saunders and Wan(2004)}]{sau-zhe2004}
\bibinfo{author}{Saunders, D.}, \bibinfo{author}{Wan, Z.}, \bibinfo{year}{2004}.
\newblock \bibinfo{title}{{Smith normal form of dense integer matrices fast algorithms into practice}}, in: \bibinfo{booktitle}{Proceedings of the 2004 International Symposium on Symbolic and Algebraic Computation}, \bibinfo{publisher}{ACM}, \bibinfo{address}{New York, NY, USA}. pp. \bibinfo{pages}{274--281}.
\newblock \DOIprefix\doi{10.1145/1005285.1005325}.
\bibitem[{Storjohann(1998)}]{sto1998}
\bibinfo{author}{Storjohann, A.}, \bibinfo{year}{1998}.
\newblock \bibinfo{title}{{An $O(n^3)$ Algorithm for the Frobenius Normal Form}}, in: \bibinfo{booktitle}{Proceedings of the 1998 International Symposium on Symbolic and Algebraic Computation}, \bibinfo{publisher}{ACM}, \bibinfo{address}{New York, NY, USA}. pp. \bibinfo{pages}{101--105}.
\newblock \DOIprefix\doi{10.1145/281508.281570}.
\bibitem[{Storjohann(2001)}]{sto2001}
\bibinfo{author}{Storjohann, A.}, \bibinfo{year}{2001}.
\newblock \bibinfo{title}{{Deterministic computation of the Frobenius form}}, in: \bibinfo{booktitle}{Proceedings 2001 IEEE International Conference on Cluster Computing}, \bibinfo{publisher}{IEEE}. pp. \bibinfo{pages}{368--377}.
\newblock \DOIprefix\doi{10.1109/SFCS.2001.959911}.
\bibitem[{Storjohann and Labahn(1996)}]{sto-lab1996}
\bibinfo{author}{Storjohann, A.}, \bibinfo{author}{Labahn, G.}, \bibinfo{year}{1996}.
\newblock \bibinfo{title}{{Asymptotically fast computation of Hermite normal forms of integer matrices}}, in: \bibinfo{booktitle}{Proceedings of the 1996 International Symposium on Symbolic and Algebraic Computation}, \bibinfo{publisher}{ACM}, \bibinfo{address}{New York, NY, USA}. pp. \bibinfo{pages}{259--266}.
\newblock \DOIprefix\doi{10.1145/236869.237083}.
\bibitem[{Tajima(2013)}]{taj2013}
\bibinfo{author}{Tajima, S.}, \bibinfo{year}{2013}.
\newblock \bibinfo{title}{Calculating generalized eigenspace of matrices (in {Japanese})}, in: \bibinfo{booktitle}{Computer Algebra: The Algorithms, Implementations and the Next Generation}. \bibinfo{publisher}{Research Institute for Mathematical Sciences, Kyoto University}. volume \bibinfo{volume}{1843} of \textit{\bibinfo{series}{RIMS K\^{o}ky\^{u}roku}}, pp. \bibinfo{pages}{146--154}.
\newblock \URLprefix \url{https://hdl.handle.net/2433/195001}.
\bibitem[{Tajima et~al.(2014)Tajima, Ohara and Terui}]{taj-oha-ter2014}
\bibinfo{author}{Tajima, S.}, \bibinfo{author}{Ohara, K.}, \bibinfo{author}{Terui, A.}, \bibinfo{year}{2014}.
\newblock \bibinfo{title}{An extension and efficient calculation of the {Horner's} rule for matrices}, in: \bibinfo{booktitle}{Proceedings of the 4th International Congress on Mathematical Software (ICMS 2014)}, \bibinfo{publisher}{Springer}. pp. \bibinfo{pages}{346--351}.
\newblock \DOIprefix\doi{10.1007/978-3-662-44199-2_54}.
\bibitem[{Tajima et~al.(2018a)Tajima, Ohara and Terui}]{taj-oha-ter2018}
\bibinfo{author}{Tajima, S.}, \bibinfo{author}{Ohara, K.}, \bibinfo{author}{Terui, A.}, \bibinfo{year}{2018}a.
\newblock \bibinfo{title}{Fast algorithm for calculating the minimal annihilating polynomials of matrices via pseudo annihilating polynomials}.
\newblock \href{http://arxiv.org/abs/1801.08437}{{\tt arXiv:1801.08437}}. \bibinfo{note}{preprint, 27 pages.}
\bibitem[{Tajima et~al.(2018b)Tajima, Ohara and Terui}]{taj-oha-ter2018b}
\bibinfo{author}{Tajima, S.}, \bibinfo{author}{Ohara, K.}, \bibinfo{author}{Terui, A.}, \bibinfo{year}{2018}b.
\newblock \bibinfo{title}{Fast algorithms for computing eigenvectors of matrices via pseudo annihilating polynomials}.
\newblock \href{http://arxiv.org/abs/1811.09149}{{\tt arXiv:1811.09149}}. \bibinfo{note}{preprint, 17 pages.}
\bibitem[{Takeshima and Yokoyama(1990)}]{tak-yok1990}
\bibinfo{author}{Takeshima, T.}, \bibinfo{author}{Yokoyama, K.}, \bibinfo{year}{1990}.
\newblock \bibinfo{title}{{A method for solving systems of algebraic equations --- using eigenvectors of linear maps on residue class rings (in Japanese)}}.
\newblock \bibinfo{journal}{Communications for Symbolic and Algebraic Manipulation} \bibinfo{volume}{6}, \bibinfo{pages}{27--36}.
\bibitem[{Turrittin(1955)}]{tur1955}
\bibinfo{author}{Turrittin, H.L.}, \bibinfo{year}{1955}.
\newblock \bibinfo{title}{Convergent solutions of ordinary linear homogeneous differential equations in the neighborhood of an irregular singular point}.
\newblock \bibinfo{journal}{Acta Math.} \bibinfo{volume}{93}, \bibinfo{pages}{27--66}.
\newblock \DOIprefix\doi{10.1007/BF02392519}.
\bibitem[{Williams et~al.(2024)Williams, Xu, Xu and Zhou}]{wil-xu-xu-zhou2024}
\bibinfo{author}{Williams, V.V.}, \bibinfo{author}{Xu, Y.}, \bibinfo{author}{Xu, Z.}, \bibinfo{author}{Zhou, R.}, \bibinfo{year}{2024}.
\newblock \bibinfo{title}{New bounds for matrix multiplication: from alpha to omega}, in: \bibinfo{booktitle}{{Proceedings of the 2024 Annual ACM-SIAM Symposium on Discrete Algorithms (SODA)}}, pp. \bibinfo{pages}{3792--3835}.
\newblock \DOIprefix\doi{10.1137/1.9781611977912.134}.

\end{thebibliography}

\end{document}